\definecolor{VeryLightBlue}{rgb}{0.9,0.9,1}
\definecolor{LightBlue}{rgb}{0.8,0.8,1}
\definecolor{MidBlue}{rgb}{0.5,0.5,1}
\definecolor{DarkBlue}{rgb}{0,0,0.6}
\definecolor{Blue}{rgb}{0,0,1}
\definecolor{Gold}{rgb}{1,0.843,0}
\definecolor{Orange}{rgb}{1,0.5,0}
\definecolor{LightGreen}{rgb}{0.88,1,0.88}
\definecolor{MidGreen}{rgb}{0.6,1,0.6}
\definecolor{DarkGreen}{rgb}{0,0.6,0}
\definecolor{VeryLightYellow}{rgb}{1,1,0.9}
\definecolor{LightYellow}{rgb}{1,1,0.6}
\definecolor{MidYellow}{rgb}{1,1,0.5}
\definecolor{DarkYellow}{rgb}{1,1,0.01}
\definecolor{DarkPurple}{rgb}{.6,0,1}
\definecolor{Red}{rgb}{1,0,0}
\definecolor{VeryLightRed}{rgb}{1,0.9,0.9}
\definecolor{LightRed}{rgb}{1,0.8,0.8}
\definecolor{MidRed}{rgb}{1,0.55,0.55}
\newcommand{\darkBlue}[1]{{\color{MidRed}{#1}}}
\newcommand{\angel}[1]{\darkBlue{\bf [Angel: #1]}}
\newtheorem{theorem}{Theorem}
\newtheorem{lemma}[theorem]{Lemma}
\newtheorem{proposition}[theorem]{Proposition}
\newtheorem{corollary}[theorem]{Corollary}
\newtheorem{definition}{Definition} 
\newtheorem{conjecture}{Conjecture}
\newtheorem{question}{Question}
\newcommand{\Href}[1]{\hyperref[#1]{\Cref{#1}}}
\renewcommand{\href}[1]{\hyperref[#1]{\cref{#1}}}
\renewcommand{\eqref}[1]{\hyperref[#1]{(\ref{#1})}}
\def\leq{\leqslant}
\def\geq{\geqslant}
\newcommand{\p}{\mathbb{P}}
\title{Antimagic Labeling for Unions of Graphs with \\ 
Many Three-Paths\thanks{Partially supported by the National Science Foundation grant DMS 1600778.}}
\author
{
Angel Chavez\thanks{California State University Los Angeles. Current address: University of Minnesota Twin Cities. Email:  chave389@umn.edu.} \and 
Parker Le 
\thanks{California State University Los Angeles.   Email: ple31@calstatela.edu.}
\and
Derek Lin\thanks{California State University Los Angeles.   Email: dlin4@calstatela.edu}
\and 
Daphne Der-Fen Liu
\thanks{Corresponding author. California State University Los Angeles. Email:  dliu@calstatela.edu.}
\and
Mason Shurman
\thanks{Current Address: University of California, Irvine. Email:  mshurman@uci.edu.}
}
\begin{document}

\maketitle

\begin{abstract} 
Let $G$ be  a graph with $m$ edges and let $f$ be a bijection from $E(G)$ to $\{1,2, \dots, m\}$. For any vertex $v$, denote by $\phi_f(v)$ the sum of $f(e)$ over all edges $e$ incident to $v$. If $\phi_f(v) \neq \phi_f(u)$ holds for any two distinct vertices $u$ and $v$, then  $f$ is called an {\it antimagic labeling}  of $G$. We call $G$  {\it antimagic} if such a labeling exists. Hartsfield and Ringel \cite{Ringel} conjectured that all connected graphs except $P_2$ are antimagic. Denote the disjoint union of graphs $G$ and $H$ by $G \cup H$, and the disjoint union of $t$ copies of $G$ by $tG$. 
For an antimagic graph $G$ (connected or disconnected), we define  the  parameter $\tau(G)$ to be the maximum integer such that $G \cup  tP_3$ is antimagic for all $t \leq \tau(G)$. Chang, Chen, Li, and Pan showed that for all antimagic graphs $G$, $\tau(G)$ is finite \cite{shifted}.  Further, Shang, Lin, Liaw \cite{star forest} and  Li \cite{double star} found the exact value of $\tau(G)$ for special families of graphs: star forests and balanced double stars,  respectively. They did this by finding explicit antimagic labelings of $G\cup tP_3$ and proving a tight upper bound on $\tau(G)$ for these special families. In the present paper, we generalize their results by proving an upper bound on $\tau(G)$ for all graphs. For star forests and balanced double stars, this general bound is equivalent to the bounds given in \cite{star forest} and \cite{double star} and tight.  In addition, we prove that the general bound is also tight for every other graph we have studied, including an infinite family of jellyfish graphs, cycles $C_n$ with 
$3 \leq n \leq 9$, and the double triangle $2C_3$.
\end{abstract}


\section{Introduction}

The graphs considered in this article are not necessarily connected, unless otherwise indicated.  Let $G$ be a graph with $m$ edges. For a bijection  $f: E(G) \to \{1,2, \ldots, m\}$ and for any vertex $v$, denote by $\phi_f(v)$ the sum of $f(e)$ over all edges $e$ incident to $v$.   We call $f$ an {\it antimagic labeling} of $G$ if for any  pair of vertices $u$ and $v$, $\phi_f(u) \neq \phi_f(v)$. A graph is {\it antimagic} if it admits an antimagic labeling.  
When $f$ is clear in context, we shorten $\phi_f(v)$ to $\phi(v)$ and call it the {\it $\phi$-value} of $v$.

Antimagic labeling was introduced by Hartsfield and Ringel  \cite{Ringel}, in which the following conjecture was posed:

\begin{conjecture}
\label{conj}
{\rm \cite{Ringel}} 
Every connected graph except $P_2$ is antimagic. 
\end{conjecture}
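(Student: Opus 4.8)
The statement to be ``proved'' here is in fact the Hartsfield--Ringel conjecture itself, which has been open since 1990 and is the motivating problem for this entire subject; so rather than a complete proof I can only outline the partial strategies that have settled special cases and indicate where each one breaks down. The most elementary idea is a degree-separation argument: if $v$ has degree $d$ then, for any bijective edge labeling, $\phi(v)$ lies in the interval $\left[\binom{d+1}{2},\, dm - \binom{d}{2}\right]$, whose endpoints depend on $d$ alone. When the degree sequence is sufficiently spread out these intervals are disjoint across distinct degree classes, so one only has to separate vertices of \emph{equal} degree; a careful greedy assignment, placing larger labels around higher-degree vertices, then finishes the argument. This handles, for instance, graphs with a dominating vertex and, more generally, graphs whose maximum degree is close to $n-1$.

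The genuine difficulty is concentrated in regular and near-regular graphs, where degrees provide no leverage and every vertex competes for the same range of $\phi$-values. The most powerful known tool here is algebraic and probabilistic: one encodes the condition ``all $\phi$-values distinct'' as the non-vanishing of a polynomial in the edge labels and applies the Combinatorial Nullstellensatz, or else bounds the probability that a uniformly random labeling fails. This is the route taken by Alon, Kaplan, Lev, Roditty and Yuster, who proved the conjecture for all graphs of minimum degree $\Omega(\log n)$ and for graphs of maximum degree at least $n-2$; subsequent work has added $k$-regular graphs for every fixed $k \geq 2$ and various dense and structured families. A plan for the full conjecture would be to combine a degree-separation reduction with such an algebraic argument to absorb the equal-degree classes.

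The step I expect to be the real obstacle --- and where every existing approach stalls --- is sparse graphs of small bounded regularity, together with trees and forests of low maximum degree. For these the admissible ranges overlap almost completely, the probabilistic estimates are too weak because there are too few edges to absorb the error, and no structural decomposition is known that controls all $\phi$-values simultaneously. It is precisely this gap that motivates the more modest program of the present paper: instead of attacking the conjecture head-on, one studies how robustly antimagic a fixed graph $G$ remains under disjoint union with copies of $P_3$, as measured by $\tau(G)$. Determining sharp values of $\tau(G)$ for concrete families (star forests, balanced double stars, jellyfish graphs, and small cycles $C_n$) is tractable by the explicit-labeling-plus-counting-bound method, whereas the conjecture in full generality is not; I would not expect the techniques assembled in this paper to resolve it.
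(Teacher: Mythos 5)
You have correctly recognized that this statement is not a theorem of the paper at all: it is the Hartsfield--Ringel conjecture, cited from \cite{Ringel} as the open problem motivating the entire work, and the paper offers no proof of it (indeed, the paper's closing section only notes that an affirmative answer to its \Href{masonsquestion} would imply it). Your survey of the known partial results and obstacles is accurate, so there is nothing to compare against and nothing to correct.
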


\noindent
\Href{conj} has received much attention in the past years (cf. \cite{partition, dense, caterpillars}), and many families of graphs are known to be antimagic. Alon, Kaplan, Lev, Roditty, and Yuster \cite{dense} proved that dense graphs are antimagic. Precisely, the authors showed that graphs of order $n$ with minimum degree $\delta(G) \geq c \log n$ for some  constant $c$ or with maximum degree $\Delta(G) \geq n-2$ are antimagic. Other families of graphs known to be antimagic include  regular graphs \cite{regular2, regular, regular bipartite}, trees with at most one vertex of degree two and their subdivisions \cite{partition, trees}, caterpillars \cite{caterpillar1,  caterpillars, caterpillar all},  
spiders \cite{Shang}, and double spiders \cite{strongly}. 

While \Href{conj} has been studied extensively, antimagic labelings for disconnected graphs have received less attention. 
It is known that there exist nontrivial 
disconnected non-antimagic graphs.
For instance, it is  easy to see that the union of two copies of $P_3$ 
is not antimagic (\Href{FigureSwapping}).

\begin{figure}[h]
\centering
\begin{tikzpicture}[scale = 1.3, rotate=90]
    \newcommand{\s}{0.08}
    \newcommand{\ls}{0.15} 
    \newcommand{\ox}{0.3} 
    
    \begin{scope}
	\draw(2,-1) -- (2,1);
	\draw[fill=black] (2, 1) circle (\s);
	\draw (2 - \ox, 1) circle (\ls) node {3};
	\draw[fill=black] (2, 0) circle (\s);
	\draw (2 - \ox, 0) circle (\ls) node {5};
	\draw (2 - \ox, 0) circle (1.5*\ls);
	\draw[fill=black] (2,-1) circle (\s);
	\draw (2 - \ox, -1) circle (\ls) node {2};
	\node[above] at (2, 0.5) {3};
    \node[above] at (2,-0.5) {2};
    
    \draw(1,-1) -- (1,1);
    \draw[fill=black] (1, 1) circle (\s);
    \draw (1 - \ox, 1) circle (\ls) node {4};
    \draw[fill=black] (1, 0) circle (\s);
	\draw (1 - \ox, 0) circle (\ls) node {5};
	\draw (1 - \ox, 0) circle (1.5*\ls);
    \draw[fill=black] (1,-1) circle (\s);
	\draw (1 - \ox, -1) circle (\ls) node {1};
    \node[above] at (1, 0.5) {4};
	\node[above] at (1,-0.5) {1};
    \end{scope}
    
    \begin{scope}[shift={(0,4)}]
	\draw(2,-1) -- (2,1);
	\draw[fill=black] (2, 1) circle (\s);
	\draw (2 - \ox, 1) circle (\ls) node {4};
	\draw (2 - \ox, 1) circle (1.5*\ls); 
	\draw[fill=black] (2, 0) circle (\s);
	\draw (2 - \ox, 0) circle (\ls) node {6};
	\draw[fill=black] (2,-1) circle (\s);
	\draw (2 - \ox, -1) circle (\ls) node {2};
	\node[above] at (2, 0.5) {4};
    \node[above] at (2,-0.5) {2};
    
    \draw(1,-1) -- (1,1);
    \draw[fill=black] (1, 1) circle (\s);
    \draw (1 - \ox, 1) circle (\ls) node {3};
    \draw[fill=black] (1, 0) circle (\s);
	\draw (1 - \ox, 0) circle (\ls) node {4};
    \draw (1 - \ox, 0) circle (1.5*\ls);
    \draw[fill=black] (1,-1) circle (\s);
	\draw (1 - \ox, -1) circle (\ls) node {1};
    \node[above] at (1, 0.5) {3};
	\node[above] at (1,-0.5) {1};
    \end{scope}
    
    \begin{scope}[shift={(0,8)}]
	\draw(2,-1) -- (2,1);
	\draw[fill=black] (2, 1) circle (\s);
	\draw (2 - \ox, 1) circle (\ls) node {3};
	\draw (2 - \ox, 1) circle (1.5*\ls);
	\draw[fill=black] (2, 0) circle (\s);
	\draw (2 - \ox, 0) circle (\ls) node {7};
	\draw[fill=black] (2,-1) circle (\s);
	\draw (2 - \ox, -1) circle (\ls) node {4};
	\node[above] at (2, 0.5) {3};
    \node[above] at (2,-0.5) {4};
    
    \draw(1,-1) -- (1,1);
    \draw[fill=black] (1, 1) circle (\s);
    \draw (1 - \ox, 1) circle (\ls) node {2};
    \draw[fill=black] (1, 0) circle (\s);
	\draw (1 - \ox, 0) circle (\ls) node {3};
    \draw (1 - \ox, 0) circle (1.5*\ls);
    \draw[fill=black] (1,-1) circle (\s);
	\draw (1 - \ox, -1) circle (\ls) node {1};
    \node[above] at (1, 0.5) {2};
	\node[above] at (1,-0.5) {1};
    \end{scope}
\end{tikzpicture}
\caption {The graph $2P_3$ is not antimagic. The graphs above exhaust all possible labelings, and prove that none are antimagic. Circled numbers are $\phi$-values and uncircled  numbers are edge  labels. Twice circled $\phi$-values are identical. 
}
\label{FigureSwapping}
\end{figure}
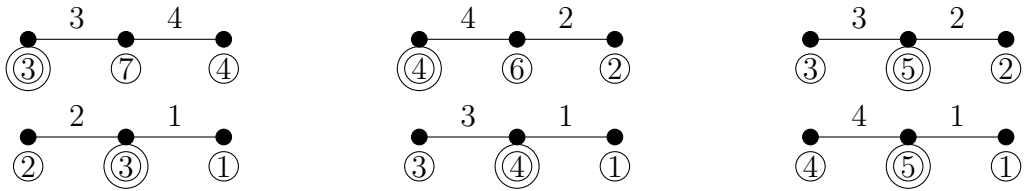
Further, it is clear that a graph containing $P_2$ as a component is not antimagic: both vertices adjacent to the isolated edge will have the same $\phi$-value. Similarly, a graph containing two isolated vertices as components is not antimagic: both vertices will have a $\phi$-value of $0$. A graph containing exactly one isolated vertex as a component will be antimagic if and only if the graph induced by deleting that vertex is antimagic. Therefore, this paper will focus on graphs that do not contain isolated edges ($P_2$) or isolated vertices as components.

Throughout the paper, we denote the union of disjoint graphs $G$ and $H$ by $G \cup  H$, and the union of $t$ copies of $G$ by $tG$. Chang, Chen, Li, and Pan (Theorem 3.5, \cite{shifted}) showed that for any graph $G$, $G \cup t P_3$ is not antimagic for a sufficiently large $t$. It is natural to consider the following parameter for a graph $G$: 

\begin{definition}
If $G$ is antimagic, define $\tau (G)$ as the maximum non-negative integer $t$ such that $G \cup t' P_3$ is antimagic for all $0 \leq t' \leq t$. If $G$ is not antimagic,  define $\tau (G) = - \infty$.   
\end{definition}

The aim of this article is to investigate  
\begin{question} 
\label{question}
What is $\tau(G)$ for a graph $G$?     
\end{question}

The main  result of this paper is the  following general upper bound of $\tau(G)$. 
An edge is {\it internal} if both of its ends are non-leaf vertices; otherwise it is a {\it pendant} edge. 
\begin{theorem}
\label{general bound}
Let $G$ be a graph with $n$ vertices and $m$ edges including $k$ internal edges. Assume $G$ does not contain isolated vertices nor $P_2$ as a component. Let $t'$ be the number of components of $G$ isomorphic to $P_3$. If $G \cup tP_3$ is antimagic, then 
$$
t \leq \min \left\{ (3 + 2\sqrt{2})(m -n)+(1 + \sqrt{2})(m +\frac{1}{2}), \ 2m + 5(k-t') +1 \right\}.
$$
\end{theorem}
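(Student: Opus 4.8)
The plan is to prove the two upper bounds separately and take their minimum. Write $N = m + 2t$ for the number of edges of $G \cup tP_3$, so any antimagic labeling $f$ is a bijection onto $\{1,\dots,N\}$. The identity that drives everything is
\[
\sum_{v} \phi_f(v) \;=\; 2\sum_{e} f(e) \;=\; N(N+1),
\]
since each edge contributes its label to exactly two vertex sums. Because $G$ has no isolated vertices, every vertex has degree at least $1$, so all $n+3t$ of the $\phi$-values are distinct \emph{positive} integers.

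For the first bound I would compare the identity above with the least possible value of a sum of $n+3t$ distinct positive integers, namely $1+2+\cdots+(n+3t) = \binom{n+3t+1}{2}$. Thus $N(N+1) \geq \binom{n+3t+1}{2}$, which upon expanding becomes a quadratic inequality in $t$ of the form $t^2 + (6n-8m-1)t + (n^2+n-2m^2-2m) \leq 0$. Solving for $t$ and writing $X = 3m-2n$, the discriminant simplifies to $8X^2+8X+1$; bounding $\sqrt{8X^2+8X+1} \leq 2\sqrt{2}\,(X+\tfrac12)$ and using $(1+\sqrt2)^2 = 3+2\sqrt2$ collapses the larger root to exactly $(3+2\sqrt2)(m-n)+(1+\sqrt2)(m+\tfrac12)$. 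This part is routine once the identity and the triangular-number estimate are in place (with a harmless check of the degenerate regime where $X+\tfrac12<0$, in which the claimed bound is already non-positive).

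The second bound needs the finer structure of the labeling. The key observation is that each leaf has $\phi$-value equal to the single label on its pendant edge, and distinct leaves carry distinct labels; since there is a bijection between leaves and pendant edges (no $P_2$ component forces each pendant edge to have exactly one leaf end), the set of leaf $\phi$-values is exactly the set of labels placed on the $m-k+2t$ pendant edges. Consequently, among the integers $\{1,\dots,N\}$ only the $N-(m-k+2t)=k$ values carried by internal edges remain unclaimed by leaves, so at most $k$ non-leaf vertices can have $\phi$-value at most $N$. Letting $\nu = (n+3t)-(m-k+2t) = n-m+k+t$ be the number of non-leaf vertices, at least $\nu-k$ of them must have $\phi$-value exceeding $N$.

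I would then rerun the sum comparison on the non-leaf vertices alone. Their total $\phi$-sum equals $N(N+1)$ minus the sum of all pendant-edge labels, i.e.\ $\binom{N+1}{2}+\sigma_{\mathrm{int}}$, where $\sigma_{\mathrm{int}}$ is the sum of the $k$ internal-edge labels; hence it is at most $\binom{N+1}{2}+kN-\binom{k}{2}$. On the other hand, the $\geq \nu-k$ non-leaf values exceeding $N$ are distinct integers and contribute at least $(\nu-k)N+\binom{\nu-k+1}{2}$ to the same sum. Comparing the two estimates gives a second quadratic inequality in $t$, which I would relax to the clean linear bound $2m+5k+1$; replacing $k$ by $k-t'$ then comes from first deleting the $t'$ copies of $P_3$ already in $G$ (which carry no internal edges) and counting all $t+t'$ copies together. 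I expect the main obstacle to be exactly this last step: extracting the stated constants $2$ and $5$ from the quadratic, and confirming that high-degree vertices of $G$, whose $\phi$-values may far exceed $2N$, do not spoil the lower bound on the non-leaf sum—which they cannot, since that bound only uses that the large values are distinct integers above $N$. The two resulting estimates are incomparable in general, which is why the theorem records their minimum.
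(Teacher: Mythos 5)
Your proof of the first bound is correct, and it in fact arrives at exactly the paper's inequality by a more elementary route. The paper's main lemma compares the total label sum $s=\binom{N+1}{2}$ against the least possible sum $l$ of the $t+n-m$ vertex sums forced to exceed $N$; writing $a=N$ and $b=t+n-m$, that lemma's inequality $s\geq l$ reads $a^2+a-2ab-b^2-b\geq 0$, which is algebraically identical to your inequality $a(a+1)\geq\binom{a+b+1}{2}$ (note $n+3t=a+b$). Your algebra is also right: the discriminant is $8X^2+8X+1$ with $X=3m-2n$, the estimate $\sqrt{8X^2+8X+1}\leq 2\sqrt{2}\,(X+\tfrac12)$ is legitimate because $X\geq 0$ under the hypotheses (each component has at least three vertices and is connected, so $3m_i\geq 3n_i-3\geq 2n_i$ componentwise), and the larger root then collapses to $(3+2\sqrt{2})(m-n)+(1+\sqrt{2})(m+\tfrac12)$.

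The second bound, however, has a genuine gap, and it is not merely a matter of ``extracting the constants $2$ and $5$'': the inequality you set up is provably too weak to yield any bound of the form $2m+5(k-t')+1$. Your lower bound $(\nu-k)N+\binom{\nu-k+1}{2}$ is exactly the quantity $l$ above (since $\nu-k=t+n-m$), while your upper bound $\binom{N+1}{2}+kN-\binom{k}{2}$ exceeds $s=\binom{N+1}{2}$ by the nonnegative term $kN-\binom{k}{2}$. So your second inequality is implied by your first one, hence weaker as a constraint on $t$, and the upper root of its quadratic is at least the first bound. No relaxation of a weaker inequality can ever beat the first bound --- yet the whole point of the second bound is that it is smaller in many cases. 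Concretely, for the star $S_{100}$ (so $m=100$, $n=101$, $k=t'=0$) your second inequality coincides with your first and yields only $t\lesssim 236$, not the required $t\leq 2m+1=201$.

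What is missing is the degree-two structure that the paper exploits. Restrict attention to the $t+t'$ centers of the $P_3$ components rather than to all non-leaf vertices. As you correctly observe, a center whose $\phi$-value is at most $N$ must have that value equal to an internal-edge label, so at least $c=t+t'-k$ centers have $\phi$-value exceeding $N$, giving the lower bound $\sum_{i=1}^{c}(N+i)$ on their total. The crucial extra fact is that each such center has degree $2$: its $\phi$-value is the sum of the two labels on its own path, and these label pairs are disjoint across distinct centers, so the same total is at most the sum of the $2c$ largest labels, $\sum_{i=1}^{2c}(N+1-i)$. Comparing the two estimates gives $\tfrac{5}{2}c\leq N+\tfrac12$, i.e. $t\leq 2m+5(k-t')+1$. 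It is this pairing constraint --- each large center consumes two large labels --- that produces the coefficient $5$, and your non-leaf sum comparison never uses it.
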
 

\noindent 
Denote the floor of the right-side of the inequality in \Href{general bound} by $\beta(G)$: 
$$
\beta(G) := \left\lfloor\min \left\{ (3 + 2\sqrt{2})(m -n)+(1 + \sqrt{2})(m +\frac{1}{2}), \ 2m + 5(k-t') +1 \right\} \right\rfloor.
$$

\begin{corollary}
\label{coro main} 
Let $G$ be a graph with $n$ vertices and $m$ edges including $k$ internal edges. Assume $G$ does not contain isolated vertices nor $P_2$ as a component. Let $t'$ be the number of components of $G$ isomorphic to $P_3$. Then $\tau(G) \leq \beta(G)$. Moreover, if $\tau(G) =  \beta(G)$, then the converse also holds. That is, $G \cup tP_3$ is antimagic if and only if $t \leq \tau(G)$. 
\end{corollary}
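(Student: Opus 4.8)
The plan is to derive \Href{coro main} directly from \Href{general bound} together with the definition of $\tau(G)$: essentially all of the combinatorial content already resides in \Href{general bound}, so the work here is to repackage the real-valued bound proved there as a statement about the integer parameter $\tau(G)$. Throughout, write $B(G)$ for the (real-valued) right-hand side of the inequality in \Href{general bound}, so that $\beta(G) = \lfloor B(G) \rfloor$ and \Href{general bound} reads: if $G \cup tP_3$ is antimagic, then $t \leq B(G)$.

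First I would establish the upper bound $\tau(G) \leq \beta(G)$. If $G$ is not antimagic then $\tau(G) = -\infty \leq \beta(G)$ and there is nothing to prove, so assume $G$ is antimagic. Then $\tau(G)$ is a finite non-negative integer, and by the definition of $\tau(G)$ (taking $t' = \tau(G)$ in the range $0 \leq t' \leq \tau(G)$) the graph $G \cup \tau(G) P_3$ is antimagic. Applying \Href{general bound} with $t = \tau(G)$ yields $\tau(G) \leq B(G)$; since $\tau(G)$ is an integer it is at most $\lfloor B(G) \rfloor = \beta(G)$, giving $\tau(G) \leq \beta(G)$.

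Next I would prove the equivalence under the additional hypothesis $\tau(G) = \beta(G)$. The forward implication is immediate: if $0 \leq t \leq \tau(G)$ then $G \cup tP_3$ is antimagic directly by the definition of $\tau(G)$. For the reverse implication, suppose $G \cup tP_3$ is antimagic for some non-negative integer $t$; then \Href{general bound} gives $t \leq B(G)$, and integrality gives $t \leq \lfloor B(G) \rfloor = \beta(G) = \tau(G)$. Combining the two implications shows that $G \cup tP_3$ is antimagic if and only if $t \leq \tau(G)$, as claimed.

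The argument is short and I do not anticipate a substantial obstacle; the only point requiring care is the interplay between the real-valued bound $B(G)$ and the integrality of $t$ and $\tau(G)$, which is precisely what permits the passage to the floor $\beta(G)$. It is worth isolating where the hypothesis $\tau(G) = \beta(G)$ is essential, namely in the reverse implication for values $t > \tau(G)$: without it one could have $\tau(G) < \beta(G)$, leaving room for some $t > \tau(G)$ with $G \cup tP_3$ antimagic (a gap above $\tau(G)$ that \Href{general bound} alone does not exclude), whereas the equality $\tau(G) = \beta(G)$ forces every such $t$ to exceed $B(G)$ and hence, by the contrapositive of \Href{general bound}, to be non-antimagic.
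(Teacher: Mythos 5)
Your proof is correct and follows exactly the route the paper intends: the paper states \Href{coro main} without an explicit proof precisely because it is the immediate consequence of \Href{general bound}, the definition of $\tau(G)$, and integrality (taking the floor to pass from the real bound to $\beta(G)$) that you spell out. Your isolation of where the hypothesis $\tau(G)=\beta(G)$ is needed — ruling out antimagic $G\cup tP_3$ with $\tau(G) < t \leq \beta(G)$ — is also the correct reading of the ``moreover'' clause.
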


The proof of  \Href{general bound} is presented in \Href{mainsection}. As mentioned above, a result in  \cite{shifted} 
implies that $\tau(G) \leq 8m+1$, where $m=|E(G)|$.  
\Href{general bound} implies $\tau(G) \leq 7m+1$.  

The bound of \Href{general bound} is sharp for many graphs. 
An {\it $n$-star} $S_n$, $n \geq 3$, is a tree with a center vertex $v$ and $n$ pendant edges incident to $v$. 
A {\it star forest} is a forest whose components are stars. 
All edges in a  star forest are pendant  edges, so $k=0$ in \Href{general bound}. A {\it double star} $S_{a,b}$ is a tree created by adding an edge between the centers of stars $S_a$ and $S_b$. That is, $S_{a,b}$ has $a+b+1$ edges where only one is internal  ($k=1$).  By \Href{general bound}, we obtain the following:

\begin{corollary}
\label{trees bound 3}
Let $F$ be a non-trivial forest with $m$ edges, $q$ components, $t'$ components isomorphic to $P_3$, and $k$ internal edges, where each component has at least two  edges. Then 
$$
\tau(F) \leq \min\left\{ -q(3 + 2\sqrt{2})   +(1+\sqrt{2})(m + \frac{1}{2}),\ 2m+5(k-t')+1\right\}. 
$$
\end{corollary}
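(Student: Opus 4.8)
The plan is to specialize \Href{general bound} to forests, where the relation among edges, vertices, and components collapses the first term of the bound. First I would check that a forest $F$ of the stated kind meets the hypotheses of \href{general bound}: since every component has at least two edges, $F$ has no isolated vertex and no $P_2$ component, so the theorem applies with $G = F$. The parameters $m$, $k$, and $t'$ (edges, internal edges, and the number of $P_3$-components of $F$) are exactly those named in the corollary, so no relabeling is needed.

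The one computation is the forest identity $m = n - q$. A forest with $n$ vertices and $q$ tree components has $\sum_i (n_i - 1) = n - q$ edges, hence $m - n = -q$. Substituting this into the first term $(3 + 2\sqrt{2})(m - n) + (1 + \sqrt{2})(m + \frac{1}{2})$ of \href{general bound} produces $-q(3 + 2\sqrt{2}) + (1 + \sqrt{2})(m + \frac{1}{2})$, which is precisely the first term of \href{trees bound 3}. The second term $2m + 5(k - t') + 1$ depends only on $m$, $k$, and $t'$, so it is unchanged under the specialization.

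It then remains to pass from a bound on $t$ to a bound on $\tau(F)$. By the definition of $\tau$, the graph $F \cup \tau(F)\,P_3$ is antimagic (take $t' = \tau(F)$ in the definition), so \href{general bound} applied with $t = \tau(F)$ yields the displayed inequality; equivalently, this is \href{coro main} read off for forests, and since $\tau(F)$ is an integer the non-floored right-hand side gives the same bound as $\beta(F)$. I expect essentially no obstacle here: the entire content of the corollary is the identity $m - n = -q$ valid for forests, so the only points requiring care are confirming the hypotheses of \href{general bound} and matching the parameters $m$, $k$, $t'$ between the forest and the general statement.
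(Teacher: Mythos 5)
Your proposal is correct and follows exactly the paper's (implicit) argument: \href{trees bound 3} is stated as a direct consequence of \href{general bound}, obtained by substituting the forest identity $m - n = -q$ into the first bound and invoking the definition of $\tau$ (equivalently \href{coro main}) to pass from a bound on $t$ to a bound on $\tau(F)$. The only nitpick is the harmless notational clash in ``take $t' = \tau(F)$ in the definition,'' since $t'$ already denotes the number of $P_3$-components in the corollary's statement.
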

\begin{corollary}
\label{star forest}
If $G$ is a star forest with $m$ edges, $q$ components and each component has at least three edges, then 
$$
\tau(G) \leq \min\{-q(3 + 2\sqrt{2})+(1 + \sqrt{2})(m  +\frac{1}{2}), \ 2m+1\}.
$$
\end{corollary}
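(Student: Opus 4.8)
The plan is to obtain this as an immediate specialization of \Href{trees bound 3} (which is itself a consequence of \Href{general bound}), since a star forest is in particular a non-trivial forest. The only work is to pin down the two parameters $k$ and $t'$ appearing in that bound for a star forest; everything analytic has already been done upstream.

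First I would record the relevant structural facts. In any star $S_{n_i}$ every edge joins the center to a leaf, so no edge has two non-leaf endpoints; hence a star forest has no internal edges and $k = 0$. Because each component is assumed to have at least three edges, no component can be $P_3 = S_2$ (which has only two edges), so the number of $P_3$-components is $t' = 0$. The same hypothesis is convenient overhead: "at least three edges per component" certainly implies "at least two edges per component," so \Href{trees bound 3} applies, and it also rules out $P_2$-components and isolated vertices, so the standing assumptions of \Href{general bound} are met.

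Substituting $k = 0$ and $t' = 0$ into the bound of \Href{trees bound 3}, the first term $-q(3+2\sqrt{2}) + (1+\sqrt{2})(m+\tfrac12)$ is unchanged, while the second term $2m + 5(k-t') + 1$ collapses to $2m + 1$. Taking the minimum gives exactly the claimed inequality; since $\tau(G)$ is an integer, this is equivalent to $\tau(G) \leq \beta(G)$. As a sanity check, the first term can also be read off directly from \Href{general bound} or \Href{coro main}: a star forest with $q$ components and $m$ edges has $n = m + q$ vertices (each $S_{n_i}$ contributing $n_i + 1$ vertices), so $m - n = -q$, and $(3+2\sqrt{2})(m-n) = -q(3+2\sqrt{2})$.

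I expect no genuine obstacle here, as the statement is a direct corollary and all the difficulty lives in the proof of \Href{general bound}. The one point meriting a moment's care is verifying that the single hypothesis "each component has at least three edges" does double duty, simultaneously forcing $t' = 0$ and guaranteeing that \Href{trees bound 3} (and hence \Href{general bound}) is applicable to $G$.
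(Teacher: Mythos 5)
Your proposal is correct and takes essentially the same route as the paper: the paper obtains this corollary by the very same parameter substitution ($k=0$ since all edges of a star forest are pendant, $t'=0$ since no component with at least three edges can be $P_3$, and $m-n=-q$ for a forest) into \Href{general bound}, of which \Href{trees bound 3} is just the intermediate specialization you pass through. No gaps; nothing further is needed.
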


\begin{corollary} 
\label{double star} 
The double star $S_{a,b}$, $a, b \geq 1$, has 
$$
\tau(S_{a,b}) \leq \min\{-(3 + 2\sqrt{2})+(1 + \sqrt{2})(a+b+\frac{3}{2}), \ 2(a+b+4)\}.
$$
\end{corollary}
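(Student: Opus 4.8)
The plan is to derive \Href{double star} as an immediate specialization of \Href{general bound} (equivalently, of \Href{trees bound 3}), so the entire task reduces to reading off the four structural parameters of the double star and substituting them into the general formula. First I would record that $S_{a,b}$ is a tree, hence a non-trivial forest with a single component ($q=1$); since $a,b\geq 1$ its edge count is $m=a+b+1\geq 3$, so each component has at least two edges and the hypotheses of \Href{general bound} (no isolated vertices, no $P_2$ component) are satisfied.

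Next I would compute the parameters. The vertex set of $S_{a,b}$ consists of the two centers together with their $a$ and $b$ leaves, so $n=a+b+2$ while $m=a+b+1$; in particular $m-n=-1$. The crucial count is the number of internal edges: with $a,b\geq 1$ both centers have degree at least two and are therefore non-leaf vertices, so the bridge joining them is internal, whereas each of the remaining $a+b$ edges has a leaf endpoint and is pendant. Hence $k=1$. Finally, $S_{a,b}$ is never isomorphic to $P_3$ (it already has at least three edges and two vertices of degree at least two), so the number of $P_3$-components is $t'=0$.

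Substituting these values into the minimum in \Href{general bound} finishes the proof. In the first term, $(3+2\sqrt{2})(m-n)=-(3+2\sqrt{2})$ and $(1+\sqrt{2})(m+\tfrac{1}{2})=(1+\sqrt{2})(a+b+\tfrac{3}{2})$, producing the left entry of the stated minimum. In the second term, $2m+5(k-t')+1=2(a+b+1)+5+1=2(a+b+4)$, producing the right entry. I expect the only step requiring any care --- the sole obstacle, and a very mild one --- to be the bookkeeping for $k$ and $t'$, namely confirming that exactly one edge is internal and that no component is a $P_3$; once these are pinned down, the conclusion is pure arithmetic.
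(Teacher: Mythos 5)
Your proposal is correct and follows exactly the paper's route: the paper also obtains \Href{double star} by direct specialization of \Href{general bound}, noting that $S_{a,b}$ has $m=a+b+1$ edges, $n=a+b+2$ vertices, exactly $k=1$ internal edge, and $t'=0$ components isomorphic to $P_3$, after which both entries of the minimum follow by the same arithmetic you carried out. Your additional verification of the hypotheses (no isolated vertices, no $P_2$ or $P_3$ component) is a mild elaboration the paper leaves implicit, but it is the same argument.
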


\noindent
Shang, Lin, and Liaw \cite{star forest} and Li \cite{double star} proved 
bounds for  $\tau(G)$ when $G$ is a star forest and when $G$ is a double star,  respectively. Their bounds for  those graphs  coincide with \Href{star forest} and \Href{double star} respectively,  but with different expressions. 

By providing desired valid antimagic labelings, the authors of \cite{star forest} and \cite{double star} also showed the reverse direction of this inequality for star forests and balanced double stars, giving us: 
\begin{theorem}
{\rm \cite{star forest}} 
A star forest G where each component has at least three edges has $\beta(G)=\tau(G)$. 
\end{theorem}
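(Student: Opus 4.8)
The inequality $\tau(G) \le \beta(G)$ is already delivered by \Href{coro main} (equivalently by \Href{star forest}), since a star forest has $k = t' = 0$, whence $\beta(G) = \lfloor\min\{-q(3+2\sqrt{2})+(1+\sqrt{2})(m+\tfrac12),\, 2m+1\}\rfloor$, and $\tau(G)$ is an integer. The entire remaining content is therefore the reverse inequality $\tau(G) \ge \beta(G)$, i.e.\ producing an antimagic labeling of $G \cup tP_3$ for \emph{every} $t \le \beta(G)$. My plan is to construct these labelings explicitly.

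The key simplification is that $P_3$ is the star $S_2$, so $G \cup tP_3$ is again a star forest, with $q$ centers of degree $n_i \ge 3$, $t$ centers of degree $2$, and $M := m + 2t$ edges in total. In a star forest every leaf meets a single edge, so under any bijective edge labeling the leaf $\phi$-values are precisely the $M$ distinct labels $1,\dots,M$ and can never collide. The problem thus collapses to choosing the labeling so that (i) each center $\phi$-value exceeds $M$, which at once separates centers from leaves, and (ii) the center $\phi$-values are pairwise distinct.

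I would sort the stars by size and distribute the labels so that the center sums form a strictly increasing sequence whose smallest term already exceeds $M$. The subtle part is the $t$ degree-$2$ centers arising from the $P_3$'s: each needs a \emph{pair} of labels whose sum lies above $M$ and differs from every other center sum, and all $t$ of them compete for the few large labels that the high-degree centers also need to lift their own sums past $M$. Concretely, I would let the pair-sums assigned to the $P_3$ centers run through consecutive values just above $M$, reserve a block of the largest labels to seed the high-degree centers, and fill the rest with the small labels, verifying that no two center sums coincide. Counting how many distinct pair-sums above $M$ can be realized at the same time as the high-degree sums is what fixes the largest admissible $t$.

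The main obstacle is precisely this packing argument, and it is where the construction must exactly meet the upper bound. The two terms of $\beta(G)$ correspond to two regimes — many small stars, where the $\sqrt{2}$ term binds, versus few large stars, where the $2m+1$ term binds — so I expect the construction to split into cases according to which resource is exhausted first: the supply of distinct pair-sums above $M$ for the $P_3$ centers, or the total label budget $M(M+1)/2$, which by pigeonhole forces some center sum back down into $\{1,\dots,M\}$ once there are too many centers. Aligning the break-even point of these two obstructions with the two terms of $\beta(G)$ is the crux; combined with the upper bound from \Href{coro main}, a construction reaching $\beta(G)$ yields $\tau(G) = \beta(G)$.
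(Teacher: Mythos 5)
Your handling of the inequality $\tau(G) \leq \beta(G)$ is correct and matches the paper: for a star forest with every component of size at least three we have $k = t' = 0$, so \Href{coro main} (equivalently \Href{star forest}) gives exactly the stated upper bound. Your reduction of the antimagic condition is also correct, and in fact sharper than you state: in a star forest every edge joins a center to a leaf, so the leaf $\phi$-values are \emph{exactly} the set $\{1,\dots,M\}$, which makes your condition (i) (all center sums exceed $M$) necessary as well as sufficient, together with (ii) pairwise distinctness of center sums. So the problem is genuinely equivalent to the packing problem you describe.

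The gap is that you never solve that packing problem, and it is the entire content of the theorem. Everything after the reduction is stated in the conditional ("I would sort the stars\dots", "I expect the construction to split into cases\dots"): no explicit assignment of labels is defined, no verification is given that the resulting center sums are pairwise distinct and exceed $M$, and, most importantly, no argument shows the construction succeeds for \emph{every} $t$ up to $\beta(G)$ — i.e., that the break-even point of your two obstructions (supply of distinct pair-sums above $M$ for the $t$ degree-2 centers versus the total budget $M(M+1)/2$) lands precisely on $\min\{-q(3+2\sqrt{2})+(1+\sqrt{2})(m+\frac12),\ 2m+1\}$ after taking floors. Identifying the crux is not the same as resolving it; as written, the proposal proves only $\tau(G)\leq\beta(G)$. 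For comparison, the paper does not construct these labelings either: it obtains the upper bound from \Href{general bound}, checks that this bound coincides (despite the different expression) with the bound of Shang, Lin, and Liaw, and then cites their explicit labelings of $G \cup tP_3$ for the reverse direction. So the statement you were asked to prove is, in the paper, a citation whose hard half lives in \cite{star forest}; a blind proof would have to carry out in full the construction you only sketched.
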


\begin{theorem}
{\rm \cite{double star}} 
For any $a \geq 2$, $\beta(S_{a,a})=\tau(S_{a,a})$. 
\end{theorem}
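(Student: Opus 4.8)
The plan is to pair the upper bound already established with a matching explicit construction. By \Href{double star} (equivalently \Href{coro main}) we already have $\tau(S_{a,a}) \leq \beta(S_{a,a})$, so the whole task is the reverse inequality $\tau(S_{a,a}) \geq \beta(S_{a,a})$. Since antimagicness is not monotone in $t$, this amounts to exhibiting, for every integer $t$ with $0 \leq t \leq \beta(S_{a,a})$, an antimagic labeling of $S_{a,a} \cup tP_3$.

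First I would reduce the antimagic condition to a statement about a few ``heavy'' vertices. Write $m = 2a+1+2t$ for the number of edges, let $x$ and $y$ be the two centers joined by the central edge, and let $\ell$ be the label placed on that central edge. Every non-central edge is incident to exactly one leaf (a leaf of $S_{a,a}$ or an endpoint of a $P_3$), and there are exactly $2a+2t = m-1$ leaves; hence the leaf $\phi$-values are precisely $\{1,\dots,m\}\setminus\{\ell\}$ and are automatically pairwise distinct. Consequently a labeling is antimagic if and only if the $t+2$ heavy values $\phi(x)$, $\phi(y)$, and the $t$ path-center sums $s_1,\dots,s_t$ are pairwise distinct and each lies outside $\{1,\dots,m\}\setminus\{\ell\}$, i.e.\ each is either greater than $m$ or equal to $\ell$ (and at most one of them may equal $\ell$). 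The problem thus becomes: produce $t+2$ distinct heavy sums, essentially all exceeding $m$.

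The construction then splits into two parts. For the paths I would assign the two edges of each $P_3$ a pair consisting of one small and one large label, chosen so that the sums $s_i$ all exceed $m$ and run through a block of distinct (say consecutive) integers above $m$; a staircase pairing of the label set accomplishes this. For the double star, $x$ and $y$ each absorb $a+1$ labels, so their sums are naturally large, and I would allocate the remaining labels (and select $\ell$) so that $\phi(x) \neq \phi(y)$, both exceed $m$, and neither collides with any $s_i$, using $\ell$ as the single permitted ``gap value'' should a near-collision arise. Checking the resulting disjointness is routine once the pairing pattern is fixed.

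The main obstacle is tightness: the labeling must remain valid all the way up to $t = \beta(S_{a,a})$, where $\beta$ is the floor of a minimum of two expressions whose active term switches---the $(1+\sqrt2)$-type term governs small $a$, while $2m+5(k-t')+1$ governs large $a$. Matching the $(1+\sqrt2)$ term means packing the heavy sums above $m$ as densely as the extremal configuration underlying \Href{general bound} allows, so that essentially no large label is wasted; this forces a carefully engineered pairing together with a case split across the two regimes, plus direct verification for the small values of $a$ near the crossover where floor effects become delicate. By contrast, the mere existence and distinctness of the sums is comparatively mechanical.
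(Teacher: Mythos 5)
Your reduction in the opening paragraphs is sound: since every non-central edge of $S_{a,a}\cup tP_3$ is incident to exactly one leaf, the leaf $\phi$-values are exactly $\{1,\dots,m\}\setminus\{\ell\}$, and antimagicness is equivalent to the $t+2$ heavy sums (two centers and $t$ path centers) being pairwise distinct, each either exceeding $m$ or equal to $\ell$, with at most one equal to $\ell$. The upper-bound half also correctly leans on \Href{double star}, i.e.\ on \Href{general bound}. But be aware that the paper itself does not prove this theorem: the constructive direction is quoted from \cite{double star}, whose entire content is the family of explicit labelings realizing every $t\leq\beta(S_{a,a})$; the paper contributes only the matching upper bound. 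So a proof of the statement must actually exhibit those labelings, and your proposal stops exactly where the difficulty begins.

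The concrete gap is that the step ``a staircase pairing of the label set accomplishes this'' fails for $t$ near $\beta(S_{a,a})$, which is the only regime that matters. Take $a$ large, so that $\beta(S_{a,a})=4a+8$ (second bound active). If the paths take a block of top labels and all path sums are to exceed $m$, a counting as in the proof of \Href{general bound} caps $t$ at $4a+5$; to reach $t\in\{4a+6,4a+7,4a+8\}$ one path sum must equal $\ell$, yet if the paths hold the top $2t$ labels then the star holds only the $2a+1$ smallest labels, forcing $\ell\leq 2a+1$ while the smallest available pair sum is $4a+5$ --- a contradiction. Hence the label sets must interleave (e.g.\ paths get $\{1,2a+2\}$ plus the top $2t-2$ labels, the star gets $\{2,\dots,2a+1\}\cup\{2a+3\}$ with $\ell=2a+3$), and at $t=4a+8$ the counting has zero slack, so the top $2t-2$ labels must be paired into sums that are \emph{exactly} the consecutive integers $m+1,\dots,m+t-1$; such a pairing exists only because $t-1=4a+7$ is odd (compare \Href{le par}), and one must still check that the two center sums are distinct and avoid this packed interval. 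None of this is routine, and in addition the regime $a\leq 12$, where the $(1+\sqrt 2)$-bound is active, and the crossover values of $a$ need their own constructions or verification. As written, your proposal is a correct problem reduction together with a plan; the mathematical content of the theorem --- the constructions that work all the way up to $\beta(S_{a,a})$ --- is missing.
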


 In \cite{star forest, double star}, the authors studied  
antimagic labelings of $G \cup tP_3$ for special trees and forests which contain no cycles. In \Href{jellysection} and \Href{2regsection}, we explore families of  graphs that do contain cycles: jellyfish graphs and 2-regular graphs. 

For positive integers  $r$ and $k$ with $k \geq 3$, the {\it jellyfish} graph  $J(C_{k}, r)$ is obtained by taking a cycle $C_k$ and adding $r$ pendant 
edges (with leaves) to each vertex on $C_k$. By \Href{general bound}, we have:

\begin{corollary}
\label{th k3 up} 
	If $J(C_k, r) \cup{t}P_3$ is antimagic, then  
	$$
	t \leq\min\left\{\left(1+\sqrt2\right)\left(kr+k+\frac{1}{2}\right), \ 2kr+7k+1\right\}.
	$$
	Consequently, if $r\geq11$, then $\tau(J(C_3, r))\leq6r+22$. 
\end{corollary}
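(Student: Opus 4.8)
The plan is to derive both parts of \Href{th k3 up} as immediate specializations of \Href{general bound}, exploiting the fact that a jellyfish graph is a single cycle decorated with leaves and therefore has a very transparent parameter count. First I would record the relevant quantities for $J(C_k,r)$. There are $k$ cycle vertices, each carrying $r$ pendant leaves, so the vertex count is $n = k + kr$ and the edge count is $m = k + kr$ (the $k$ cycle edges together with the $kr$ pendant edges). The decisive structural observation is that $J(C_k,r)$ contains exactly one cycle, hence $m = n$, so the term $(3+2\sqrt2)(m-n)$ in \Href{general bound} vanishes. Moreover each cycle vertex has degree $r+2 \geq 3$ and is thus a non-leaf, so every cycle edge is internal while every pendant edge is not; the number of internal edges is therefore exactly $k$. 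Finally $J(C_k,r)$ is a single component and is not isomorphic to $P_3$, so $t'=0$.

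Substituting $m = n = kr+k$, internal-edge count $k$, and $t'=0$ into \Href{general bound} collapses the first argument of the minimum to $(1+\sqrt2)(kr+k+\tfrac12)$ and the second to $2(kr+k)+5k+1 = 2kr+7k+1$, which is precisely the claimed bound. One point worth flagging explicitly is the deliberate notational overlap: the symbol $k$ serves both as the cycle length and, in \Href{general bound}, as the number of internal edges, and it is a fortunate coincidence for jellyfish graphs that these two quantities coincide.

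For the ``consequently'' clause I would set $k=3$, producing the two competing bounds $(1+\sqrt2)(3r+\tfrac72)$ and $6r+22$, and then determine when the linear bound is the smaller of the two. Solving $(1+\sqrt2)(3r+\tfrac72)\geq 6r+22$ reduces to $3(\sqrt2-1)\,r \geq \tfrac{37-7\sqrt2}{2}$, i.e. $r \geq \tfrac{37-7\sqrt2}{6(\sqrt2-1)} \approx 10.9$. Since $r$ is an integer, $r\geq 11$ forces the minimum to equal $6r+22$, which is itself an integer, so $\beta(J(C_3,r)) = 6r+22$ and hence $\tau(J(C_3,r)) \leq 6r+22$ by \Href{coro main}. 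The argument is entirely routine: there is no genuine obstacle beyond careful bookkeeping, the only mild care being (i) to confirm the one-cycle identity $m=n$ so the first bound simplifies to the stated form, and (ii) to verify the crossover threshold precisely enough to justify the integer cutoff $r\geq 11$ rather than a nearby value.
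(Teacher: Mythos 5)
Your proposal is correct and matches the paper's (implicit) argument exactly: the paper also derives \Href{th k3 up} by substituting $m=n=kr+k$, internal-edge count $k$, and $t'=0$ into \Href{general bound}, and the crossover computation showing the minimum equals $6r+22$ precisely when $r\geq 11$ (threshold $\approx 10.9$) is the same. Nothing is missing; your explicit flagging of the $m=n$ simplification and the coincidence of cycle length with internal-edge count is just a more careful write-up of what the paper leaves to the reader.
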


\noindent
We prove in \Href{jellysection} that the bound of \Href{general bound} is sharp for infinitely many jellyfish graphs: 
\begin{theorem}
\label{jellyfish}
For $r\geq 11$, $J(C_3, r)\cup{t}P_3$ is antimagic if and only if $t \leq 6r+22$. Equivalently, $\beta(J(C_3,r))=\tau(J(C_3,r))$, provided $r \geq 11$.
\end{theorem}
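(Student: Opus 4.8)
The plan is to lean on the upper bound already in hand. By \Href{th k3 up}, for $r\geq 11$ we have $\tau(J(C_3,r))\leq 6r+22$, and in particular $J(C_3,r)\cup(6r+23)P_3$ is not antimagic. Hence the ``only if'' direction and the inequality $\tau(J(C_3,r))\leq\beta(J(C_3,r))$ come for free. Everything therefore reduces to the constructive ``if'' direction: for each integer $t$ with $0\leq t\leq 6r+22$, exhibit an explicit antimagic labeling of $H_t:=J(C_3,r)\cup tP_3$. Granting this, the definition of $\tau$ forces $\tau(J(C_3,r))=6r+22=\beta(J(C_3,r))$, which is the asserted equality.

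To organize the construction, write $M=3r+3+2t$ for the edge count of $H_t$ and classify vertices by degree: the three cycle vertices $a,b,c$ have degree $r+2$; the $3r$ jellyfish leaves and the $2t$ path-endpoints have degree $1$; and the $t$ path-centers have degree $2$. Since $f$ is a bijection, any two degree-$1$ vertices automatically get distinct $\phi$-values, so those never conflict. The remaining collisions are of three kinds: (i) two path-centers sharing a pair-sum; (ii) a path-center's pair-sum equaling a single leaf label; and (iii) the cycle values $\phi(a),\phi(b),\phi(c)$ clashing with each other or with a center value. The labeling must defeat all three at once.

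The intended scheme partitions $\{1,\dots,M\}$ into blocks: a reserved triple for the three cycle edges, a block for the $3r$ pendant edges, and the $2t$ path-edge labels arranged as $t$ matched low--high pairs. The pairing is chosen so that the $t$ center pair-sums are pairwise distinct and all land in reserved gaps disjoint from every single label used on a leaf, which kills (i) and (ii); meanwhile the pendant labels at $a,b,c$ and the cycle-edge labels are steered so that $\phi(a),\phi(b),\phi(c)$ are the three largest $\phi$-values, mutually distinct, and strictly above every center sum, which kills (iii). Because the relative sizes of the pendant block ($3r$ labels) and the path block ($2t$ labels) swing as $t$ grows, I expect the explicit pairing to be presented in a few cases keyed to the range of $t$, with the arithmetic calibrated to stay valid exactly up to $t=6r+22$.

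The main obstacle is the pairing of step (ii) near the extremal value $t=6r+22$. A naive ``smallest labels to the paths'' assignment cannot work: if the $2t$ path labels were exactly $\{1,\dots,2t\}$, then forcing all $t$ pair-sums to exceed $2t$ would force them all equal to $2t+1$ (pairing $i$ with $2t+1-i$), since $1+2+\cdots+2t=t(2t+1)$ with equality only in that case; so distinctness and gap-avoidance are genuinely in tension. The labels must therefore be truly interleaved, and one must verify---likely by a parity or arithmetic-progression argument on the pair-sums---that $t$ distinct center values fit into the gaps below $\phi(a),\phi(b),\phi(c)$ for every $t\leq 6r+22$ while the room runs out at $6r+23$. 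This threshold is exactly where $6r+22$ becomes the binding term of the minimum in \Href{th k3 up} (for $r<11$ the other term is smaller and antimagicness would already fail earlier), which is why $r\geq 11$ is assumed; I expect that same hypothesis to supply the slack needed to carry the pairing all the way to the tight bound, and checking that the three cycle sums remain distinct and maximal throughout is where the real work lies.
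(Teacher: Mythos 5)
Your reduction is sound as far as it goes: the ``only if'' direction does come for free from \Href{th k3 up}, and the theorem does reduce to exhibiting an antimagic labeling of $J(C_3,r)\cup tP_3$ for every $0\leq t\leq 6r+22$. Your negative observation is also correct and matches the paper's design pressure---a naive ``all small labels to the paths'' pairing forces every center sum to equal $2t+1$, so the path labels must be interleaved with the pendant labels. But at the point where the actual work begins, your proposal stops: you describe desiderata (center sums distinct and avoiding leaf labels, cycle sums distinct and maximal) and say the pairing should be verified ``likely by a parity or arithmetic-progression argument,'' without supplying either the construction or the verification. That is the entire content of the theorem, so this is a genuine gap, not a proof.

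The missing ingredient in the paper is \Href{le par}: a union of $n$ integer intervals of common length $k$ (with $n,k$ not both even) can be split into $k$ transversal multisets whose sums are $k$ \emph{consecutive} integers. This single lemma discharges both of your hard steps at once. The paper fixes three $3$-paths with label pairs $\{2,6\},\{4,5\},\{3,7\}$ (center sums $8,9,10$, which collide only with the labels placed on the cycle edges, whose endpoints are not leaves), puts $8,9,10$ on $E(C_3)$, then applies \Href{le par} to the pendant labels so that $\phi(v_1),\phi(v_2),\phi(v_3)$ become three consecutive integers of order $\tfrac32 r^2$, and applies \Href{le par} again to pair the remaining $2(t-3)$ labels so the remaining center sums are consecutive integers exceeding $m'=3r+2t+3$. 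The case split is by parity of $t$ (not by the size of $t$, as you guessed), because \Href{le par} needs $n$ and $k$ not both even; and the hypothesis $r\geq 11$ enters only in the final inequalities $6r+t+15+\lceil t/2\rceil\geq m'+1$ and $6r+2t+11+\lfloor t/2\rfloor<\min_j\phi(v_j)$, not---as you suggest---because antimagicness ``fails earlier'' for $r<11$ (there the other term of the minimum in \Href{th k3 up} is simply the binding bound, and a different target would have to be hit). Without \Href{le par} or an equivalent explicit mechanism, your outline cannot be completed as written.
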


In \Href{jellysection} and the works discussed above,   
it seems simpler to find antimagic labelings for $G \cup tP_3$ for all $t \leq \tau(G)$  when the second bound in \Href{general bound} is smaller. This occurs when $G$ has many leaves and few internal edges. 
On the other hand, it is more difficult to find such labelings when the first bound is smaller: so far to our knowledge these labelings have only been found for finitely many graphs. 

Because $\tau(G)$ has not been well studied in graphs where the first bound in \Href{general bound} is smaller, in \Href{2regsection}, we investigate 2-regular graphs, where because $m=n=k$, the first bound is always smaller.  
We start by proving some general  recursive properties for antimagic labelings of $G \cup tP_3$ for any graph $G$. These properties prove especially useful when $G$ is 2-regular.
With these results, we show that the bound in  \Href{general bound} is sharp for $2C_3$ and $C_n$, \ 
$3 \leq n \leq 9$, and we conjecture this holds for all $C_n$.  
In \Href{problems}, we ask whether the bound on $\tau(G)$ in \Href{general bound} is tight and raise other questions for future research.

%
\section{Proof of \Href{general bound}} \label{mainsection}
%

Before we introduce \Href{main lemma}, the main lemma in the proof of \Href{general bound}, we establish necessary notations. Let $G$ be a graph with $n$ vertices and $m$ edges. Let $t$ be a non-negative integer. Suppose $f$ is an antimagic labeling for $G'= G \cup t P_3$. 
Then $G'$ has $m'=m+2t$ edges. Denote the centers (degree-2 vertices)  of the 3-paths by $\{w_1, w_2, \ldots, w_t\}$. 

Denote the {\it sum} of all the  labels we can use, $[1, m'=m+2t]$, by: 
\begin{equation}
s(G,t) := \sum\limits_{e \in E(G)} f(e) + \sum\limits_{i = 1}^{t}\phi_f(w_i)  =\sum\limits_{i=1}^{m+2t} i
= 2t^2 + (2m+1)t + \frac{m(m+1)}{2}.
\label{def s}
\end{equation} 

Denote $V^*(G')= V(G) \cup \{w_1, w_2, \cdots, w_t\}$. 
For every $u \in V^*(G')$, $\phi_f(u) \leq m+2t$ if and only if $\phi_f(u)=f(e)$ for some $e \in E(G)$. Because  
$|V^*(G')|=n+t$ and  $|E(G)|=m$, it must be that at least $t+n-m$ vertices $u \in V^*(G')$ have $\phi_f(u) \geq m+2t+1$. Denote the {\it least}  total vertex sums for these vertices by:  
\begin{equation}
    l(G,t) := \sum\limits_{i=1}^{t+n-m} (m+2t+i) = \frac{(t + n - m)(5t + n + m + 1)}{2}.
\label{def l}
\end{equation}
When $G$ and $t$ are  clear in the context, we simply denote $s(G,t)$ and $l(G,t)$ by $s$ and $l$, respectively. 

\begin{lemma}
\label{main lemma}
Let $G$ be a graph with $n$  vertices and $m$ edges. 
If $G \cup tP_3$ is antimagic for some non-negative integer $t$, then $s \geq l$.
\end{lemma}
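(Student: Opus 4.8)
The plan is to work with the \emph{entire} vertex set of $G'=G\cup tP_3$ rather than with $V^*(G')$ alone, and to compare the total of all $\phi$-values against the smallest total that $n+3t$ distinct positive integers could produce. Recall that $G'$ has $n+3t$ vertices and $m'=m+2t$ edges, and that because $G$ has no isolated vertices and every vertex of a $P_3$ has degree at least one, each vertex $v$ satisfies $\phi_f(v)\ge 1$.

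The argument then has three short steps. First, since $f$ is antimagic, the $n+3t$ numbers $\phi_f(v)$ are pairwise distinct positive integers, so
$$
\sum_{v\in V(G')}\phi_f(v)\ \ge\ 1+2+\cdots+(n+3t)\ =\ \frac{(n+3t)(n+3t+1)}{2}.
$$
Second, a handshake count gives $\sum_{v\in V(G')}\phi_f(v)=2\sum_{e\in E(G')}f(e)=2s$, since each edge contributes its label to exactly two endpoints and $s$ is the total of all $m'$ labels. Third, I would record the arithmetic identity $s+l=\tfrac12(n+3t)(n+3t+1)$: the block defining $l$ sums exactly the $t+n-m$ integers $m'+1,\dots,m'+(t+n-m)$ that immediately follow the block $1,\dots,m'$ summing to $s$, and $m'+(t+n-m)=n+3t$. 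Combining the three displays yields $2s\ge s+l$, which is precisely the claim $s\ge l$.

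I expect the only genuine obstacle to be choosing the right vertex set. The surrounding setup invites one to argue inside $V^*(G')$, but summing $\phi_f$ over $V(G)$ double-counts every edge of $G$, contributing $2\sum_{e\in E(G)}f(e)$; the inequality one gets this way carries a leftover term $\sum_{e\in E(G)}f(e)$ and is too weak to force $s\ge l$. Including the $2t$ path-leaves is exactly what makes the handshake produce a clean factor of two on the \emph{total} label sum, and recognizing that $s+l$ collapses to the triangular number $\binom{n+3t+1}{2}$ is the decisive identity. A pleasant byproduct is that no case analysis on the sign of $t+n-m$ is needed: the identity $s+l=\binom{n+3t+1}{2}$ is purely algebraic, and the distinctness lower bound on $\sum_v\phi_f(v)$ holds regardless, so the ``at least $t+n-m$ large vertices'' count from the setup is subsumed automatically.
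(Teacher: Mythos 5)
Your proof is correct, and it takes a genuinely different---and more economical---route than the paper's. The paper never sums $\phi_f$ over the whole vertex set: it works inside $V^*(G')$ (the vertices of $G$ plus the path centers), partitions $E(G)$ into the classes $W$, $E(S)$, $R$ according to which $\phi$-values the edge labels realize, deduces $|B'|=n-m+t+|R|$ for the set $B'$ of vertices with $\phi$-value above $m'$, and lower-bounds $\sum_{v\in B'}\phi(v)$ by consecutive integers starting at $m'+1$, with the $|R|$-terms cancelling at the end in \href{sl block}. You replace all of that bookkeeping by three facts: the handshake identity $\sum_{v\in V(G')}\phi_f(v)=2s$ taken over all $n+3t$ vertices (crucially including the $2t$ path leaves, which the paper's $V^*(G')$ discards), the bound $\sum_{v}\phi_f(v)\geq\frac{(n+3t)(n+3t+1)}{2}$ from distinctness and positivity of the $\phi$-values, and the polynomial identity $s+l=\frac{(n+3t)(n+3t+1)}{2}$, which, as you note, holds for either sign of $t+n-m$, so no degenerate case is needed. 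Both arguments rely on the paper's standing assumption that $G$ has no isolated vertices (otherwise some $\phi$-value is $0$ and positivity fails; the paper's claim $|S|=|E(S)|$ needs it as well). What the paper's longer computation buys is that \href{sl block} is reused later: the Remark in \Href{2regsection} and the proof of \Href{max phi bound} cite it for the stronger fact that $s$ is at least the sum of the $t+n-m$ largest $\phi$-values. Your method recovers this, and \Href{max phi bound} itself, in one extra line each: the $m'$ smallest of the $n+3t$ distinct positive $\phi$-values sum to at least $s$, so the $t+n-m$ largest sum to at most $2s-s=s$; and all values except the maximum sum to at least $\frac{(n+3t-1)(n+3t)}{2}$, so $\max_v\phi_f(v)\leq 2s-\frac{(n+3t-1)(n+3t)}{2}=n+3t+s-l$. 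So nothing downstream is lost, and your argument is shorter and sidesteps the delicate claim $|S|=|E(S)|$, which tacitly assumes the $\phi$-value of every low-sum vertex of $G$ is realized as the label of an edge of $G$ rather than of a path edge.
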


\begin{proof}
Let $f$ be an antimagic labeling of $G' = G \cup tP_3$. Then $G'$ has $m' = m + 2t$ edges. 
For each vertex $v \in V(G)$, either $\phi(v) \geq m'+1$ or $\phi(v)=f(e)$ for some $e \in E(G)$.  When $t \geq 1$, denote the degree-2 vertices of the $t$ 3-paths by $w_i$, $1 \leq i \leq t$, so that $\phi(w_1) < \phi(w_2) < \phi(w_3) < \ldots < \phi(w_t)$.  Define the following: 
$$
\begin{array}{lll}
B &:=& \{v \in V(G) \ | \ \phi(v) \geq m'+1\}, \\ 
B'&:=& \{v \in V^*(G')\ | \ \phi(v) \geq m'+1\}, \\ 
S &:=& \{v \in V(G) \ | \ \phi(v) \leq m'\}, \\
W &:=& \{e \in E(G) \ | \ f(e) = \phi(w_i) \mbox{ for some $w_i$}\}, \\
E(S) &:=& \{e \in E(G) \ | \ f(e)= \phi(v) \ \mbox{for some $v \in S\}$}, \\
R &:=& E(G) \setminus (W \cup E(S)).
\end{array}
$$

If $t=0$, then $W = \emptyset$ and $B=B'$.  Observe that $W \cup E(S) \cup R$ is a partition of $E(G)$. Hence $m = |W| + |E(S)| + |R|$. Note that $n = |B| + |S|$ and $|S|=|E(S)|$. 
Therefore, we obtain  $|B| = n - m + |W| + |R|$ and 
$$
\begin{array}{llll}
\sum\limits_{v \in B} \phi(v) + \sum\limits_{v \in S} \phi(v) &=& 2 \sum\limits_{ e \in E(G)} f(e) \\ 
&=& \sum\limits_{ e \in E(G)} f(e) + \sum\limits_{ e \in E(S)} f(e)  +\sum\limits_{e \in W} f(e) +  \sum\limits_{ e \in R} f(e) \\
&=& \sum\limits_{ e \in E(G)} f(e) + 
\sum\limits_{v \in S} \phi(v) +   
\sum\limits_{e \in W} f(e) +  \sum\limits_{ e \in R} f(e).  
\end{array} 
$$ 
Simplifying the above, we obtain 
$$
\sum\limits_{e \in E(G)} f(e) = \sum\limits_{v \in B} \phi(v) - \sum\limits_{ e \in W} f(e) - \sum\limits_{ e \in R} f(e). $$

Note that $|B'| = |B| + t - |W| = n-m+|W|+|R| + t - |W| = n-m+t+|R|$.  Substituting the above into \href{def s} and by \href{def l} we get: 

\begin{equation}
\begin{array}{llll}  
s &=& \sum\limits_{v \in B} \phi(v) + \sum\limits_{i = |W| + 1}^{t} \phi(w_i)  - \sum\limits_{e \in R} f(e) \\    
  &=& \sum\limits_{v \in B'} \phi(v) - \sum\limits_{e \in R} f(e) \\
 &\geq& \sum\limits_{i=1}^{n-m+t+|R|} (m'+i)  - \sum\limits_{e \in R} f(e) \\ 
 &\geq& \sum\limits_{i=1}^{t+n-m} (m'+i) + |R|(m'+1) - |R|m' \\
 &\geq& l.
\end{array}
\label{sl block}
\end{equation} 
Hence, the proof is complete. 
\end{proof}

\noindent
{\it Proof of the First Bound in \Href{general bound})} \ By \Href{main lemma}, if $G\cup tP_3$ is antimagic we have $s(G,t) \geq l(G,t)$, which is a quadratic inequality in $t$. Solving this inequality, we find that: 

\begin{align*}
    t &\leq \Big(4m - 3n + \frac{1}{2}\Big) + \sqrt{(3\sqrt{2}m - 2\sqrt{2}n + \frac{1}{\sqrt{2}})^2 - \frac{1}{4}} \\ 
      &= \Big(4m - 3n + \frac{1}{2}\Big) + \Big(3\sqrt{2}m - 2\sqrt{2}n + \frac{1}{\sqrt{2}}\Big) + O\Big(\frac{1}{n+m}\Big).
\end{align*}

\begin{proposition}
\label{angel only makes good approximations}
For any integers $m$ and $n$, we have 
\[ 
\left\lfloor\sqrt{(3\sqrt{2}m - 2\sqrt{2}n + \frac{1}{\sqrt{2}})^2 - \frac{1}{4}}\right\rfloor = \left\lfloor3\sqrt{2}m - 2\sqrt{2}n + \frac{1}{\sqrt{2}}\right\rfloor.
\]
\end{proposition}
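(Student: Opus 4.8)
The plan is to remove the irrationalities by a single substitution and reduce the claim to an elementary parity statement. Write $x := 3\sqrt2\,m - 2\sqrt2\,n + \tfrac{1}{\sqrt2}$ for the inner expression. Factoring out $\tfrac{1}{\sqrt2}$, I would observe that $x = \tfrac{1}{\sqrt2}\bigl(2(3m-2n)+1\bigr) = \tfrac{c}{\sqrt2}$, where $c := 2(3m - 2n) + 1$ is an \emph{odd} integer; in particular $x^2 = \tfrac{c^2}{2}$, and since $|c| \ge 1$ we always have $x^2 - \tfrac14 = \tfrac{c^2}{2} - \tfrac14 \ge \tfrac14 > 0$, so the left-hand side is well defined. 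Because $\sqrt{x^2 - 1/4}$ is nonnegative while $\lfloor x\rfloor$ can be negative, the identity can only hold when $x \ge 0$; this is exactly the regime $3m \ge 2n$ (equivalently $c \ge 1$), which is the only case relevant to \Href{general bound}, so I would prove the statement under the assumption $x \ge 0$.

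Next I would establish the two inequalities separately. Since $0 \le \sqrt{x^2 - 1/4} \le x$, taking floors immediately gives $\lfloor\sqrt{x^2 - 1/4}\rfloor \le \lfloor x\rfloor$. For the reverse inequality it suffices to show that no integer $k$ lies in the half-open interval $(\sqrt{x^2 - 1/4},\, x]$, for then $\sqrt{x^2-1/4}$ and $x$ must share the same floor.

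The crux is to rule out such a $k$, and this is where the oddness of $c$ enters. Squaring (all quantities being nonnegative), an integer $k \in (\sqrt{x^2 - 1/4},\, x]$ would satisfy $x^2 - \tfrac14 < k^2 \le x^2$, i.e.\ $\tfrac{c^2}{2} - \tfrac14 < k^2 \le \tfrac{c^2}{2}$. Multiplying through by $4$ gives $2c^2 - 1 < 4k^2 \le 2c^2$, and the only integer in the length-one interval $(2c^2 - 1,\, 2c^2]$ is $2c^2$ itself; hence $4k^2 = 2c^2$, that is, $c^2 = 2k^2$. This is impossible by parity, since $c$ odd forces $c^2$ odd while $2k^2$ is even. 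This contradiction shows no such $k$ exists, completing the argument. The main obstacle is really just finding the right substitution $c = 2(3m-2n)+1$: once the inner expression is written as an odd integer divided by $\sqrt2$, the problem collapses to the clean fact that $2c^2$ is never a perfect square for odd $c$, and no delicate estimation of the square root is needed.
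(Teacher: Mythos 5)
Your proof is correct and takes essentially the same approach as the paper's: both arguments rule out an integer $k$ in the interval $\left(\sqrt{x^2-\frac{1}{4}},\, x\right]$ by squaring, the paper concluding that the integer $k^2-(18m^2+8n^2-24mn+6m-4n)$ would have to lie in $\left(\frac{1}{4},\frac{1}{2}\right]$, which is the same contradiction you reach in the form $c^2=2k^2$ with $c$ odd. Your explicit restriction to $x\geq 0$ (equivalently $3m\geq 2n$) is in fact a refinement rather than a loss of generality: the identity as literally stated fails for, e.g., $m=0$, $n=1$, so the paper's ``for any integers $m$ and $n$'' is slightly too strong, though this is harmless in the application of \Href{general bound}, where every graph considered satisfies $3m\geq 2n$.
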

\begin{proof}
Suppose to the contrary there exists an integer $x$ such that 
$$
\sqrt{(3\sqrt{2}m - 2\sqrt{2}n + \frac{1}{\sqrt{2}})^2 - \frac{1}{4}} < x \leq 3\sqrt{2}m - 2\sqrt{2}n + \frac{1}{\sqrt{2}}.$$
Square both sides and simplify to yield $\frac{1}{4} < x^2 - (18m^2 + 8n^2 - 24mn - 4n + 6m) \leq \frac{1}{2}$, which is impossible since $m$ and $n$ are integers. 
This completes the proof of \Href{angel only makes good approximations}.
\end{proof}

\noindent
By \Href{angel only makes good approximations}, $t \leq (3 + 2\sqrt{2})(m -n) + (1 +  \sqrt{2})(m +\frac12)$, completing the proof of the first  bound in \Href{general bound}. 
\medskip

\noindent
{\it Proof of the Second Bound in \Href{general bound})} \ Next we prove the second bound, $\tau(G) \leq 2m + 5(k-t')+1$. Suppose $G' = G \cup tP_3$ is antimagic. If $t+t' \leq k$, then  $t \leq k-t' \leq 2m+5(k-t')+1$.  
Assume $t+t' > k$.  
Let $f$ be an antimagic labeling for $G'$. 
Since  $G$ contains $k$ internal edges, at least $(t+t'-k)$ 3-paths must have $\phi(w_i) \geq m'+1 =m+2t+1$. Denote by $y$  the sum of $\phi(w_i)$ for these $(t+t'-k)$ paths. Then we can bound $y$ below by using the fact that every $\phi(w_i)$ is distinct, and we can bound $y$ above by using the fact that all the edges are given different labels:
$$
\sum\limits_{i=1}^{t+t'-k} (m' + i) \leq y\leq \sum\limits_{i=1}^{2(t+t'-k)} (m'+1 -i). 
$$
As $m' = m+2t$, using direct calculation and solving the above inequalities, we obtain $t \leq 2m + 5(k-t') +1$. This completes the proof of \Href{general bound}.  
\hfill$\blacksquare$

Our next result shows that $\beta(G) \geq 0$ if every component of $G$ has at least 3 edges.
\begin{proposition}
\label{beta}
Let $G$ be a graph without isolated vertices and having no $P_2$ as a component.  If $\beta(G) < 0$, then $G$ contains at least one $P_3$ as a component and $G$ is not antimagic. \end{proposition}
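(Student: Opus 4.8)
The statement bundles two conclusions, and the plan is to treat them separately. The claim that $G$ is not antimagic I would get immediately from \Href{general bound} applied with $t=0$: since $G = G\cup 0\,P_3$ and $G$ meets the hypotheses (no isolated vertices, no $P_2$ component), if $G$ were antimagic the theorem would give $0\le\min\{\cdots\}$, whence $\beta(G)=\lfloor\min\{\cdots\}\rfloor\ge 0$. Contrapositively, $\beta(G)<0$ forces $G$ to be non-antimagic. (\Href{coro main} could be quoted in place of the theorem.) So the real content is the first conclusion, that $G$ has a $P_3$ component.

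For that I would prove the contrapositive: if $G$ has no $P_3$ component, i.e.\ $t'=0$, then $\beta(G)\ge 0$. Since $\beta(G)=\lfloor\min\{\cdots\}\rfloor\ge 0$ exactly when both expressions inside the $\min$ are non-negative, it suffices to check both. The second expression is $2m+5(k-t')+1=2m+5k+1\ge 1$ because $k\ge 0$, so only the first expression needs attention.

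To handle the first expression $E_1:=(3+2\sqrt2)(m-n)+(1+\sqrt2)(m+\tfrac12)$, I would split it across the connected components $G_1,\dots,G_q$, writing $E_1=\sum_i c_i+\tfrac{1+\sqrt2}{2}$ where $c_i:=(3+2\sqrt2)(m_i-n_i)+(1+\sqrt2)m_i$ and $m_i,n_i$ count the edges and vertices of $G_i$. The total coefficient of $m_i$ is $4+3\sqrt2>0$, so, fixing $n_i$, the quantity $c_i$ is smallest when $m_i$ is smallest, namely $m_i=n_i-1$ (a tree). Because $G$ has no isolated vertex, no $P_2$, and (by the case assumption) no $P_3$, every component has $n_i\ge 3$, and the only component on three vertices is $C_3$. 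A brief case analysis — evaluating $c_i=3(1+\sqrt2)$ for $C_3$, and bounding $c_i\ge -(3+2\sqrt2)+(1+\sqrt2)(n_i-1)\ge\sqrt2$ when $n_i\ge 4$ using $m_i\ge n_i-1$ — shows every $c_i>0$, so $E_1>0$ and $\beta(G)\ge 0$.

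The step I expect to be the main obstacle is precisely this per-component estimate of the first expression: a tree contributes $m_i-n_i=-1$, so it is not self-evident that $E_1$ stays non-negative once many tree-like components pile up. The resolution is structural rather than computational — excluding $P_2$ and $P_3$ pushes tree components up to $n_i\ge 4$ while forcing the lone three-vertex possibility to be $C_3$ — and it is exactly these exclusions that keep each $c_i$ strictly positive. I would double-check the boundary cases $P_4$, $K_{1,3}$ (giving $c_i=\sqrt2$) and $C_3$ (giving $c_i=3(1+\sqrt2)$) by hand to confirm that no admissible small component slips below zero.
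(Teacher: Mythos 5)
Your proposal is correct and takes essentially the same route as the paper: both prove the contrapositive that the absence of a $P_3$ component forces $\beta(G)\geq 0$ (the second bound being trivially positive, and the first positive because excluding isolated vertices, $P_2$, and $P_3$ forces each component to have $m_i \geq 3$ edges and $m_i - n_i \geq -1$), and both deduce non-antimagicity from $\tau(G) \geq 0$ together with $\tau(G) \leq \beta(G)$. Your per-component case analysis ($C_3$ versus $n_i \geq 4$) is just finer-grained bookkeeping of the paper's aggregate inequalities $m \geq 3q$ and $m - n \geq -q$, yielding the same per-component lower bound $\sqrt{2}$.
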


\begin{proof}
We first prove that $\beta(G) \geq 0$ if $G$ does not contain $P_3$ as a component. Assume $G$ does not have $P_3$ as a component. Then every component of $G$ contains at least $3$ edges.  If $G$ has $q$ components, then $m-n \geq -q$, and $m \geq 3q$. Therefore the first bound of $\beta(G)$ is non-negative. As the second bound of $\beta(G)$ is always positive, $\beta(G) \geq 0$.  If  $G$ is antimagic, then $\beta(G) \geq \tau(G) \geq 0$. Hence, the second conclusion holds. 
\end{proof}

In general, 
there is no simple way to determine which of the two bounds in \Href{general bound} is better (smaller),  
but we can make some estimates.  
Suppose $m \to \infty$.
Without loss of generality, suppose the graph does not contain $P_3$ as a component. Because $k \leq m$, the second bound is smaller  if $\frac{n}{m} < 3(5\sqrt{2}-7)\approx 0.21$. 
Furthermore, if $k \approx m$ then the second bound is better if and only if $\frac{n}{m} <  3(5\sqrt{2}-7)\approx 0.21$. This ratio is derived by substituting 
$m$ for $k$ in the second bound and solving. Note that this if and only if statement does not hold without the assumption that $k \approx m$. For example, in star forests, where $k = 0 \not\approx m$, even though $\frac{n}{m} \approx 1 > 3(5\sqrt{2}-7)$, the second bound is smaller for sufficiently large $m$. For the cases when $n \approx m$, such as in trees, the second bound is smaller if and only if roughly $\frac{k}{m} < \frac{\sqrt{2}-1}{5} \approx 0.08$.  This ratio is derived by substituting $n$ for $m$ in the first bound. Note that in 2-regular graphs, $\frac{k}{m} = 1$, so the first bound is always smaller. 

%
\section{Proof of \Href{jellyfish}} \label{jellysection}
%

Recall a jellyfish graph $J(C_k, r)$ is established by attaching $r$ pendant edges to every vertex of a $k$-cycle $C_k$.  Throughout this section we denote the $k$ internal vertices on the jellyfish by  $v_i, i\in[1,k]$, and the $t$ internal vertices on the 3-paths in the graph $J(C_k, r) \cup tP_3$ by $w_i, i\in[1,t]$. 

To prove  \Href{jellyfish}, we start with the following lemma: 

\begin{lemma}\label{le par}
Let $k, n$ be positive integers, where $n$ and $k$ are not both even. 
Let $a_1, a_2, \ldots, a_n$ be integers, and define sets of consecutive integers by $A_i = [a_i, a_i+k-1]$, $i \in [1,n]$. Let $S$ be the multi-set-union $\cup_{i=1}^n A_i$ where repetitions are allowed (if the $A_i$'s are disjoint, then $S= \cup_{i=1}^n A_i$).  
Then  $S$ can be partitioned into multi-sets $S_1, S_2, \cdots, S_k$ so that each $S_j$ contains exactly one element from each $A_i$, $i\in[1,n]$ and the set $\{\sum S_j: j \in [1,k]\}$ consists of $k$ consecutive integers, where $\sum S_j$ is the sum of elements in $S_j$. Formally,   
    \[
    \sum S_j = \sum_{x\in S_j} x=a_1 + j-1 + \frac{1}{k}\left(\sum_{y\in S \setminus A_1}y \right).
    \]
\end{lemma}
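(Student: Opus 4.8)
The plan is to strip away the additive shifts and reduce to a clean array-filling problem. For each $i$, replace every element of $A_i$ by its offset from $a_i$, turning $A_i$ into a copy of $\{0,1,\dots,k-1\}$; then choosing one element of each $A_i$ for each part $S_j$ amounts to filling an $n\times k$ array whose $i$-th row is a permutation of $\{0,1,\dots,k-1\}$, with the $j$-th column (plus the shifts) forming $S_j$. Each row of offsets sums to $\binom{k}{2}$, so the $k$ column sums average $\tfrac{n(k-1)}{2}$; if they are to be $k$ consecutive integers, the smallest must equal $\tfrac{(n-1)(k-1)}{2}$, which is an integer precisely when $(n-1)(k-1)$ is even, i.e. precisely when $n$ and $k$ are not both even. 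This is exactly where the hypothesis enters. Since restoring the shifts adds the common constant $\sum_i a_i$ to every column sum (so the RHS of the asserted formula is in fact symmetric in the $a_i$ and no set truly plays a special role), it suffices to build an offset array with each row a permutation of $\{0,\dots,k-1\}$ and with $k$ consecutive column sums; reindexing columns in increasing order of sum then yields $\sum S_j=\sum_i a_i+\tfrac{(n-1)(k-1)}{2}+(j-1)$, which is exactly the claimed $a_1+(j-1)+\tfrac1k\sum_{y\in S\setminus A_1}y$.

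I would assemble this array from three gadgets. First, a single \emph{identity} row $(0,1,\dots,k-1)$ contributes the consecutive ramp $(0,1,\dots,k-1)$ to the columns. Second, an \emph{identity--reverse pair} of rows, $(0,1,\dots,k-1)$ and $(k-1,k-2,\dots,0)$, contributes the constant $k-1$ to every column and hence shifts all column sums equally. Third, for odd $k$ I would use a \emph{two-row gadget} consisting of the identity row together with the cyclic shift $\rho(j)=\big((j-1)+\tfrac{k-1}{2}\big)\bmod k$, which is a permutation of $\{0,\dots,k-1\}$; the key point is that the pointwise sums of these two permutations again form $k$ consecutive integers.

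With these gadgets the argument splits into two cases. \textbf{Case $n$ odd:} take one identity row together with $\tfrac{n-1}{2}$ identity--reverse pairs. The pairs each contribute a common constant to every column, so the column sums are the ramp $(0,1,\dots,k-1)$ shifted by $\tfrac{(n-1)(k-1)}{2}$, hence consecutive. \textbf{Case $n$ even} (so $k$ is odd by hypothesis): take one two-row gadget together with $\tfrac{n-2}{2}$ identity--reverse pairs; the gadget supplies consecutive column sums and the pairs shift them by a common constant, again giving $k$ consecutive integers. In both cases every row is a permutation of $\{0,\dots,k-1\}$, so reading down any column selects exactly one element of each $A_i$, and after restoring the shifts and ordering columns by increasing sum the sums are exactly $\sum_i a_i+\tfrac{(n-1)(k-1)}{2}+(j-1)$.

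The one genuinely computational step — and the main obstacle — is verifying the two-row gadget for odd $k$: that $\big\{(j-1)+\rho(j):j\in[1,k]\big\}$ is a block of $k$ consecutive integers. I would prove this by splitting the index $i=j-1$ into the ranges $i\leq\tfrac{k-1}{2}$ and $i\geq\tfrac{k+1}{2}$, on which the sum equals $2i+\tfrac{k-1}{2}$ and $2i+\tfrac{k-1}{2}-k$ respectively. Each range produces an arithmetic progression of common difference $2$, and because $k$ is odd the two progressions have opposite parities and their ranges interlock so as to cover every integer in $\big[\tfrac{k-1}{2},\tfrac{3(k-1)}{2}\big]$ exactly once. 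Everything else is bookkeeping; I would also confirm the degenerate cases $n=1$ (a single identity row) and $k=1$ (one part) separately, although both fall out of the same construction.
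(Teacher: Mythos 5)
Your proposal is correct and is essentially the paper's own proof in different clothing: the paper also builds an $n\times k$ array whose rows are the $A_i$, uses increasing/decreasing row pairs (your identity--reverse pairs) to contribute a constant to every column, and, when $n$ is even and $k$ is odd, replaces the second row by exactly your cyclic-shift gadget (shifted by $\tfrac{k+1}{2}$ rather than $\tfrac{k-1}{2}$, an immaterial difference) so that the first two rows already have consecutive column sums. Your offset normalization, the necessity remark explaining the parity hypothesis, and the explicit interlocking-parities verification of the gadget are nice additions, but the decomposition and key computation coincide with the paper's.
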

\begin{proof}
We write $S$ as 
an $n \times k$ matrix $M$, where the  $i^{th}$-row are numbers from $A_i = [a_i, a_i + k-1]$. 
For each odd row $M_{2i+1}$ we write the elements from $A_{2i+1}$ in increasing order while for each even row $M_{2i}$ we write the elements from $A_{2i}$ in decreasing order. Observe that the column sums of the sub-matrix formed by any two consecutive rows $M_{i}$ and $M_{i+1}$ are identical.  Explicitly, this means for any column index $j$, $M_{ij}+M_{(i+1)j} = a_{i}+a_{i+1}+k-1$. 
Thus, if $n$ is odd, the column sums of $M$ form a set of consecutive $k$ integers, and the proof is complete by letting $S_j$ be the elements in the $j^{th}$ column, $j \in [1,k]$. 

Now assume $n$ is even. By our assumption, 
$k$ must be odd. We re-arrange the numbers in the second row $M_2$ to be: 
$$
\begin{array}{llll} 
M_2' 
&=&\left(a_2+ \frac{k+1}{2}, \ a_2 + \frac{k+3}{2}, \cdots, a_2+k-1, \ a_2, \ a_2+1, \ \cdots, \ a_2+ \ \frac{k-1}{2} \right).\\
\end{array} 
$$ 


The column sums of the sub-matrix formed by $M_1$ 
and $M_2'$ are a set of consecutive $k$ integers, $[a_1+a_2+\frac{k-1}2, \ a_1+a_2+k-1+\frac{k-1}2]$. 

By the above discussion, the column sums of the remaining $n-2$ rows (if $n \geq 4$) are identical.  Hence, the column sums of $M'$ (where $M_2$ is replaced by $M_2'$) form a set of consecutive $k$ integers.  The proof is complete by letting $S_j$ be the elements in the $j^{th}$ column of $M'$, $j \in [1,k]$.
\end{proof}

\medskip

\noindent
{\it Proof of \Href{jellyfish})} \  By \Href{th k3 up} it suffices to show that  $G'=J(C_3, r)\cup{t}P_3$ is antimagic for $t \leq 6r+22$. 
Note $|E(G')|=m'=3r+2t+3$. 

Assume $t \leq 2$. Label the internal edges on the jellyfish with $m', \ m'-1, \ m'-2$, so that the internal vertices on the jellyfish have partial sums $[2m'-3, 2m'-1]$. If $t=1$ or $t=2$, label a 3-path with $m'-3$ and $m'-4$. If $t=2$, label the remaining 3-path with $m'-5$ and $m'-6$. We then proceed to label the remaining edges of the jellyfish by applying \Href{le par} to the collection $[2m'-3, 2m'-1] \cup [1,3r] = [2m'-3, 2m'-1] \cup [1,3] \cup [4,6] \cup \dots \cup [3r-2, 3r]$ to obtain three sets with distinct sums. Assign the numbers in the set containing $2m'-3$ to the edges incident to the vertex on the cycle with a partial sum $2m'-3$. Do the same for $2m'-1$ and $2m'-2$. This will ensure the vertex sums of the three internal vertices of the jellyfish are distinct. Because $r \geq 11$,  the resulting labeling is antimagic as $\phi(v_1),\phi(v_2),\phi(v_3)>\phi(w_i) > m'$ for any  $w_i$. This completes the proof for $t \leq 2$.   

Assume $t\geq3$. In the following three steps, we (1) assign labels to the edges of the cycle $C_3$ and three $3$-paths, (2) assign labels to the pendant edges of the jellyfish, and (3) assign labels to  
the unlabeled $3$-paths.

{\bf (1) Edges on  $C_3$ and three 3-paths:} Regardless of $t$, we first fix the labels of three $3$-paths with labels $\{2,6\}$, $\{4,5\}$, and $\{3,7\}$ to obtain $\phi$-values $[8,10]$ for the internal vertices of the three $3$-paths. Next, we assign $8, 9, 10$ to  $E(C_3)$ on the jellyfish, which gives us the partial $\phi$-values of $v_1, v_2, v_3$ as $[17,19]$, as shown in \Href{th14}. 

\begin{figure}[hbtp]
    \centering
    \begin{tikzpicture}[scale = 1]
        \begin{scope}[shift={(0, 0)}, scale=2]
            \newcommand{\s}{0.05}
            
            \draw (0,0) -- (2,0) -- (1,0.866) -- (0,0);
            
            \draw[fill=black] (0, 0) circle (\s);
    	    \draw[fill=black] (2, 0) circle (\s);
    	    \draw[fill=black] (1, 0.866) circle (\s);
    	    
            \node[left] at (0.5, 0.433) {$8$};
            \node[above] at (1, 0) {$9$};
            \node[right] at (1.5, 0.433) {$10$};
            
            \draw (-0.3, 0.1) circle (0.15) node {17};
            \draw (1, 1.1) circle (0.15) node {18};
            \draw (2.25, 0.1) circle (0.15) node {19};
        \end{scope}
        
        \begin{scope}[shift = {(6,0)}]
            \foreach \i/\u/\d/\s in {0/2/6/8, 2/4/5/9, 4/3/7/10
            } {
                \draw (\i, 0) -- (\i, 2);
                
                \draw[fill=black] (\i, 0) circle (0.1);
                \draw[fill=black] (\i, 1) circle (0.1);
                \draw[fill=black] (\i, 2) circle (0.1);
                
                \node[left] at (\i, 0.5) {\u};
                \node[left] at (\i, 1.5) {\d};
                
                \ifnum\i=6
	                \draw (\i+0.9, 1) circle (0.6) node {\s};
	            \else
	                \draw (\i+0.6, 1) circle (0.3) node {\s};
	            \fi
            }
        \end{scope}
    \end{tikzpicture}
    \caption{Fixed labels for $J(C_3, r)\cup tP_3$. Circled numbers are (partial) $\phi$-values and other numbers are labels on edges.}
\label{th14}
\end{figure}
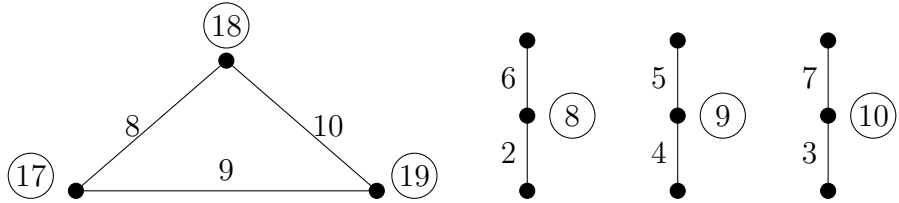
	
{\bf (2) Pendant edges on the jellyfish:} This step is split into two cases. First,  
suppose that $t$ is even 
or $t=3$. 
Label four pendant edges using labels in $\{1\} \cup [11,21]$ to each $v_i$ as shown in \Href{th13}.  
    
\begin{figure}[hbtp]
    \centering
    \begin{tikzpicture}[scale = 1]
        
        \begin{scope}[shift={(10, 0)}, scale=2]
            \newcommand{\s}{0.05}
            
            \draw (0,0) -- (2,0) -- (1,0.866) -- (0,0);
            
            \draw (-0.25,0.433) -- (0,0);
            \draw (-0.5,0) -- (0,0);
            \draw (-0.25,-0.433) -- (0,0);
            \draw (0.25,-0.433) -- (0,0);
            
            \draw (2.25,0.433) -- (2,0);
            \draw (2.5,0) -- (2,0);
            \draw (2.25,-0.433) -- (2,0);
            \draw (1.75,-0.433) -- (2,0);
            
            \draw (0.5,0.866) -- (1,0.866);
            \draw (0.75,1.3) -- (1,0.866);
            \draw (1.25,1.3) -- (1,0.866);
            \draw (1.5,0.866) -- (1,0.866);
            
            \draw[fill=black] (0, 0) circle (\s);
    	    \draw[fill=black] (2, 0) circle (\s);
    	    \draw[fill=black] (1, 0.866) circle (\s);
    	    
    	    \draw[fill=black] (-0.25,0.433) circle (\s);
    	    \draw[fill=black] (-0.5,0) circle (\s);
    	    \draw[fill=black] (-0.25,-0.433) circle (\s);
    	    \draw[fill=black] (0.25,-0.433) circle (\s);
    	    
    	    \draw[fill=black] (0.5,0.866) circle (\s);
    	    \draw[fill=black] (0.75,1.3) circle (\s);
    	    \draw[fill=black] (1.25,1.3) circle (\s);
    	    \draw[fill=black] (1.5,0.866) circle (\s);
    	    
    	    \draw[fill=black] (2.25,0.433) circle (\s);
    	    \draw[fill=black] (2.5,0) circle (\s);
    	    \draw[fill=black] (2.25,-0.433) circle (\s);
    	    \draw[fill=black] (1.75,-0.433) circle (\s);
    	    
            \node[left] at (0.5, 0.433) {$8$};
            \node[above] at (1, 0) {$9$};
            \node[right] at (1.5, 0.433) {$10$};
            
            \node[left] at (-0.25,0.433) {$1$};
            \node[left] at (-0.5,0) {$17$};
            \node[left] at (-0.25,-0.433) {$20$};
            \node[left] at (0.25,-0.433) {$21$};
            
            \node[right] at (2.25,0.433) {$11$};
            \node[right] at (2.5,0) {$14$};
            \node[right] at (2.25,-0.433) {$15$};
            \node[right] at (1.75,-0.433) {$19$};
            
            \node[above] at (0.5,0.866) {$12$};
            \node[above] at (0.75,1.3) {$13$};
            \node[above] at (1.25,1.3) {$16$};
            \node[above] at (1.5,0.866) {$18$};
            
            \draw (0.3, 0.1) circle (0.15) node {76};
            \draw (1, 0.65) circle (0.15) node {77};
            \draw (1.7, 0.1) circle (0.15) node {78};
        \end{scope}
        
        \begin{scope}[shift = {(16,-1)}]
            \foreach \i/\u/\d/\s in {0/2/6/8, 2/4/5/9, 4/3/7/10} {
                \draw (\i, 0) -- (\i, 4);
                
                \draw[fill=black] (\i, 0) circle (0.1);
                \draw[fill=black] (\i, 2) circle (0.1);
                \draw[fill=black] (\i, 4) circle (0.1);
                
                \node[right] at (\i, 1) {\u};
                \node[right] at (\i, 3) {\d};
                \draw (\i+0.6, 2) circle (0.3) node {\s};
            }
        \end{scope}
    \end{tikzpicture}
    \caption{Fixed labels for $J(C_3, r)\cup tP_3$, where $t$ is even or $t=3$.} 
    \label{th13}
    \end{figure}
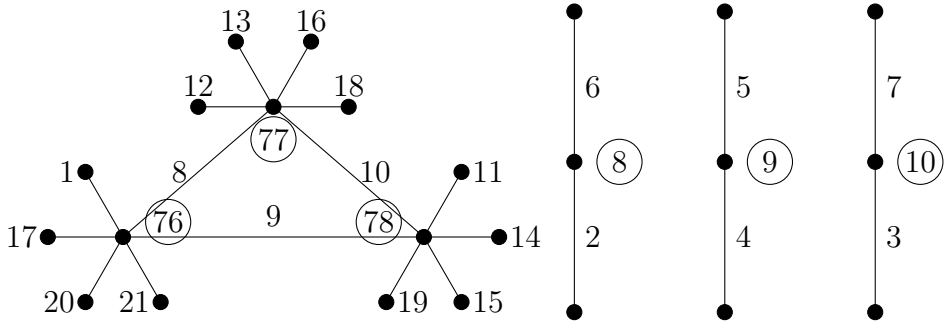
    
As the sum of the four pendant edges incident to each $v_i$ is 59, the  partial vertex sums of $v_i$ on the jellyfish are: $[76, 78]$. We then label the remaining pendant edges on the jellyfish by applying \Href{le par} with the sums $[76, 78]$ and the unused consecutive labels $[22,3r+9] = [22, 24] \cup [25, 27] \cup \dots \cup [3r+7, 3r+9]$ such that the vertex sums of the $3$ internal vertices of the jellyfish remain consecutive integers. 
If $t=3$, the resulting labeling is antimagic because $\phi(v_1),\phi(v_2),\phi(v_3)>m'$ as 76, 77, and 78 are in different $S_j$'s. 
If $t \geq 4$ and $t$ is even, we calculate this set of vertex sums as: 
\begin{equation}\label{vik3te}
    \left[76+\frac13\sum_{i=22}^{3r+9}i,\ 78+\frac13\sum_{i=22}^{3r+9}i\right]=\left[\frac12(3r^2+19r+28), \ \frac12(3r^2+19r+32)\right].
\end{equation}

Next, suppose $t$ is odd and $t \neq 3$. We label the pendant edges of the jellyfish by applying \Href{le par} to $[17,19] \cup [11,3r+10] = [17,19] \cup [11,13] \cup [14,16] \ldots \cup [3r+8,3r+10]$ so that the vertex sums of the three internal vertices of the jellyfish are the consecutive integers:
\begin{equation}\label{vik3to}
    \left[ 17 + \frac13\left(\sum_{i=11}^{3r+10}i\right), \ 19 + \frac13\left(\sum_{i=11}^{3r+10}i\right) \right]
    =
    \left[\frac12(3r^2+21r+34), \ \frac12(3r^2+21r+38)\right].
\end{equation}

{\bf (3) Remaining 3-paths:} In steps (1) and (2), we  have used the labels in  $[2,3r+10]$ if $t$ is odd and $t\neq3$, and labels in $[1,3r+9]$ if $t$ is even or $t=3$. 

In this step, if $t$ is odd and $t\neq3$, we first label one of the remaining 3-paths by  $\{1, m'\}$. Then for all cases, 
there are an odd number of unlabeled 
$3$-paths. The unused labels are:
\[
\left\{
\begin{array}{lll}
[3r+11,3r+t+6] \cup [3r+t+7,3r+2t+2] &\mbox{if $t$ is odd and $t \neq 3$;} 
\\ 

[3r+10, 3r+t+6] \cup [3r+t+7, 3r+2t+3] &\mbox{if $t$ is even or $t=3$.} 
\end{array}
\right.
\]
Applying \Href{le par}, we partition the above unused labels into pairs 
to the edges of the remaining 3-paths, so that their internal vertices have consecutive vertex sums as:


\begin{equation}\label{wik3te}
    \left[6r+t+15+\left\lceil\frac{t}{2}\right\rceil, \ 6r+2t+11+\left\lfloor\frac{t}{2}\right\rfloor\right].
\end{equation}


\noindent
To show that $f$ is an antimagic labeling, it suffices to verify: 

(i) $\phi(w_i)>m$ for $i\in[4,t]$, and 

(ii) $\phi(w_i)< \min\{\phi(v_j)\}$ for $i\in[1,t]$ and $j\in[1,3]$.

\noindent
Inequality (i) is true since by \href{wik3te} and the assumptions that $r \geq 11$ and $t \leq 6r+22$, we have:
	\[
	6r+t+15+\left\lceil \frac{t}{2} \right\rceil \geq m'+1= 3r+2t+4.
\]
Inequality (ii) is true since by the assumptions $r \geq 11$ and $t \leq 6r+22$ together with 
\href{vik3te}, \href{vik3to}, and \href{wik3te}, we obtain:  
\noindent 
	\[6r+2t+11+\left\lfloor\frac{t}{2}\right\rfloor < 
	\left\{
	\begin{array}{lll}
\frac12(3r^2+21r+34) &\mbox{if $t$ is odd;}\\
\frac12(3r^2+19r+28) &\mbox{if $t$ is even.}
\end{array}	
\right. 
	\]
This completes the proof of  \Href{jellyfish}. 
\hfill $\blacksquare$

We remark here that it has been shown in \cite{sea urchin} that the bound in \Href{th k3 up} is also tight for other jellyfish graphs $J(C_k, r)$, 
including the following cases: (i) $k=3$ and $r=10$, (ii) $k \leq 7$ and $r \geq 11$, (iii) $k \geq 8$ and $r \geq 12$. 

%
%
\section{Two-Regular Graphs} \label{2regsection}
%

In this section, we discuss the values of $\tau(G)$ for 2-regular graphs, which  are disjoint unions of cycles of lengths 
at least 3. For a 2-regular graph $G$ with $m$ edges we have $m=n=k$ in \href{general bound}, implying that the first bound in \Href{general bound} is always smaller than the second. 

\begin{corollary}
\label{cycle}
Let $G$ be a 2-regular graph with $n$ vertices. Then 
$$
\tau(G) \leq \Big\lfloor (1 +  \sqrt{2})(n + \frac{1}{2}) \Big\rfloor.  \hspace{1in}
$$
\end{corollary}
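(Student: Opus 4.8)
The plan is to derive \Href{cycle} directly from \Href{general bound} together with \Href{coro main}, by reading off the combinatorial parameters $n, m, k, t'$ for a 2-regular graph and substituting them into the first expression inside the minimum. The essential preliminary observation is that a 2-regular graph is a disjoint union of cycles, each of length at least $3$, and from this I would record its parameters as follows. Summing degrees gives $2m = 2n$, so $m = n$. Since no vertex is a leaf (every vertex has degree $2$), both endpoints of each edge are non-leaf vertices, so \emph{every} edge is internal; hence $k = m = n$. Finally, $P_3$ contains two vertices of degree $1$ and so is not $2$-regular, meaning no component of $G$ is isomorphic to $P_3$; thus $t' = 0$. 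This matches the remark preceding the statement that $m = n = k$.

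With these values in hand, I would substitute $m = n$, $k = n$, and $t' = 0$ into the two expressions appearing in \Href{general bound}. In the first expression the term $(3 + 2\sqrt{2})(m - n)$ vanishes, leaving exactly $(1 + \sqrt{2})\bigl(n + \tfrac{1}{2}\bigr)$, while the second expression becomes $2n + 5n + 1 = 7n + 1$. To finish, I would invoke \Href{coro main}, which gives $\tau(G) \leq \beta(G)$, where $\beta(G)$ is the floor of the minimum of these two expressions. Since a minimum is at most either of its arguments, this yields
$$
\tau(G) \leq \beta(G) = \left\lfloor \min\left\{ (1+\sqrt{2})\left(n+\tfrac12\right),\ 7n+1 \right\} \right\rfloor \leq \left\lfloor (1+\sqrt{2})\left(n+\tfrac12\right) \right\rfloor,
$$
which is precisely the claimed bound. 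One may additionally note that $1 + \sqrt{2} < 7$, so $(1+\sqrt{2})(n+\tfrac12) \leq 7n+1$ for every $n \geq 3$; hence the first expression always realizes the minimum and the two inequalities for $\beta(G)$ above are in fact equalities, consistent with the statement in the text that the first bound is always the smaller one for $2$-regular graphs.

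Since this argument is a pure specialization of an already-established theorem, I do not expect any genuine obstacle. The only point that warrants a moment of care is the verification that every edge of a $2$-regular graph is internal, so that $k = n$ rather than some smaller value, and that $t' = 0$; these two facts are exactly what make the $(3+2\sqrt{2})(m-n)$ term collapse and leave the clean closed form. Everything else is routine substitution and the elementary comparison $1+\sqrt{2} < 7$.
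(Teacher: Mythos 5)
Your proposal is correct and matches the paper's own derivation: the text preceding \Href{cycle} obtains it exactly by observing that a 2-regular graph has $m=n=k$ (and, implicitly, $t'=0$ and no isolated vertices or $P_2$ components), so that the first expression in \Href{general bound} collapses to $(1+\sqrt{2})(n+\tfrac12)$ and is the smaller of the two. Your additional check that $1+\sqrt{2}<7$ makes explicit the paper's remark that the first bound always realizes the minimum for 2-regular graphs, but the argument is the same routine specialization.
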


\noindent
In this section, we prove that the above bound is tight for $C_n$, $3 \leq n \leq 9$, and for $2C_3$.  
To this end, we start by establishing some general  recursive properties for antimagic labelings of $G \cup tP_3$ for any graph $G$. 

\medskip
\noindent
{\bf Remark.}  Recall $s=s(G,t)$ and $l=l(G,t)$ defined in \href{def s} and \href{def l} in the proof of \href{general bound}. The calculation in  \href{sl block}  indeed shows that $s$ is at least the sum of the $t+n-m$ largest $\phi$-values of $V(G \cup tP_3)$. 
This fact can be used to prove the following results.

\begin{theorem}
\label{max phi bound}
Let $G$ be an $m$-edge $n$-vertex graph. Assume $f$ is an antimagic labeling for $G' = G \cup tP_3$ for some $t$. Then $\max\{\phi(v): v \in V(G')\} \leq n + 3t + s - l$, where $s=s(G,t)$ and $l=l(G,t)$ are defined in \href{def s} and \href{def l}.
\end{theorem}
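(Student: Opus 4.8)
The plan is to extract a bound on the single largest $\phi$-value directly from the summation estimate already established in the proof of \Href{main lemma}. As recorded in the Remark, the chain of inequalities in \href{sl block} shows that $s$ is at least the sum of the $N := t+n-m$ largest $\phi$-values among the vertices of $G' = G \cup tP_3$. I would first dispose of the degenerate regime $N \leq 0$: there $l \leq 0$ by \href{def l}, while $s$ is the total sum of all edge labels, so any vertex satisfies $\phi(v) \leq s \leq n + 3t + s - l$ and the bound is immediate. For the remainder assume $N \geq 1$.

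Let $\psi_1 > \psi_2 > \cdots > \psi_N$ be the $N$ largest $\phi$-values of $G'$, so that $\psi_1 = \max\{\phi(v) : v \in V(G')\} =: M$, and by \href{sl block} we have $s \geq \sum_{i=1}^{N}\psi_i$. In the notation of \Href{main lemma}, $|B'| \geq N$ and every vertex of $B'$ has $\phi$-value at least $m'+1 = m+2t+1$; hence $\psi_1,\ldots,\psi_N$ are $N$ distinct integers each at least $m'+1$. The key step is to separate off the top term $\psi_1 = M$ and bound the remaining $N-1$ values below by the smallest admissible distinct integers exceeding $m'$:
$$ s \geq \psi_1 + \sum_{i=2}^{N}\psi_i \geq M + \sum_{i=1}^{N-1}(m'+i). $$

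It then remains to match $\sum_{i=1}^{N-1}(m'+i)$ with $l$. Using the definition \href{def l} of $l = \sum_{i=1}^{N}(m'+i)$ together with $m' = m+2t$, a one-line telescoping computation gives $\sum_{i=1}^{N-1}(m'+i) = l - (m'+N) = l - 3t - n$, since the omitted top term is $m'+N = (m+2t)+(t+n-m) = 3t+n$. Substituting into the previous display yields $M \leq s - (l - 3t - n) = n + 3t + s - l$, as claimed. The only genuine content here is the recognition that the overall maximum $M$ must sit at the top of precisely the collection whose sum is controlled by $s - l$ in \Href{main lemma}; once that reformulation is in hand, every remaining step is routine arithmetic, so I expect no real obstacle beyond carefully handling the boundary case $N \leq 0$ and confirming that the top $N$ values genuinely exceed $m'$ (so that leaves of the $3$-paths, whose $\phi$-values are at most $m'$, never enter the count).
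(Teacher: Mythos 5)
Your proof is correct and follows essentially the same route as the paper: both extract the bound from the computation in \href{sl block} (as packaged in the Remark preceding the theorem) by isolating the single largest $\phi$-value and bounding the remaining $t+n-m-1$ values below by $m'+1, m'+2, \ldots$, the only difference being that the paper organizes this as a proof by contradiction (deriving $s \geq s+1$) while you state the direct inequality. Your explicit handling of the degenerate case $t+n-m \leq 0$, which the paper's argument passes over silently, is a minor but harmless addition.
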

\begin{proof}
Assume to the contrary that there exists some vertex $v \in V(G')$ with $\phi(v) \geq n + 3t + s - l + 1$. 
By \Href{main  lemma}, $s \geq l$. From the calculation of \href{sl block} the following contradiction emerges (recall $m' = m+2t$):
$$
\begin{array}{llll}  
s 
&=& \sum\limits_{v \in B'} \phi(v) - \sum\limits_{e \in R} f(e) \\
&\geq& 
(n+3t+s-l+1)+\sum\limits_{i=1}^{t+n-m-1} (m'+i) + |R|(m'+1) - |R|m'\\
&\geq& (n+3t+s-l+1) + l -(m+2t+t+n-m) \\
&=& s+1.
\end{array}
$$
Thus, the proof is complete. 
\end{proof}

\begin{lemma}
\label{two largest labels general}
Let $p$ be a positive integer. Let $G'$ be an $m'$-edge graph that has a degree-2 vertex $v$ which is incident to edges $e_{m'}$ and $e_{m'-1}$. Let $G^*$ be the graph obtained by subdividing $e_{m'}$ into $p$ edges. If $G'$ admits an antimagic labeling $f$ such that $f(e_{m'}) = m'$, $f(e_{m'-1}) = m' - 1$, and $\phi_{f}(u) \leq 2m' -1$ for all $u \in V(G')$,  
then there exists an antimagic labeling for $G^*$.
\end{lemma}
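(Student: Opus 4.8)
The plan is to build an antimagic labeling $g$ of $G^*$ directly from $f$. Write the subdivided edge as a path $v = z_0 - z_1 - \dots - z_{p-1} - z_p = x$, where $z_1, \dots, z_{p-1}$ are the $p-1$ new degree-two vertices and $x$ is the old second endpoint of $e_{m'}$ (a vertex distinct from $v$, since $e_{m'}$ is not a loop); let $a_i$ denote the label $g$ assigns to the edge $z_{i-1}z_i$. Since $G^*$ has $m'+p-1$ edges, I would keep $g=f$ on every edge of $G'$ other than $e_{m'}$ (these carry the labels $\{1,\dots,m'-1\}$, with $e_{m'-1}$ still carrying $m'-1$), and place the $p$ largest labels $\{m',m'+1,\dots,m'+p-1\}$ on the path edges $a_1,\dots,a_p$. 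All the remaining work is in choosing the order of these $p$ labels along the path. The case $p=1$ is trivial, as then $G^*=G'$ and $g=f$.

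The central device is a \emph{three-band} separation of $\phi$-values. Every interior path vertex $z_i$ ($1 \leq i \leq p-1$) has degree two with both incident labels at least $m'$, so $\phi_g(z_i) \geq 2m'+1$; meanwhile every vertex of $G'$ other than $v$ retains its $f$-value, and by hypothesis these are all distinct and at most $2m'-1$. Hence, if I can force $\phi_g(v)$ into the single gap value $2m'$ while keeping $\phi_g(x) \leq 2m'-1$, the three groups — unchanged vertices together with $x$ (values $\leq 2m'-1$), then $v$ (value $2m'$), then the interior path vertices (values $\geq 2m'+1$) — separate automatically, leaving only the distinctness of vertices \emph{within} the top band. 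To keep $\phi_g(x)=\phi_f(x)$ I would put the smallest new label $m'$ on the edge $z_{p-1}x$; since the edge it replaces at $x$ also carried label $m'$ in $f$, the sum at $x$ is unchanged. To land $\phi_g(v)$ at $2m'$ I would put $m'+1$ on the edge at $v$, giving $\phi_g(v) = (m'-1)+(m'+1) = 2m'$, which lies strictly between the lower two bands and the interior sums.

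It then remains to assign $\{m'+2,\dots,m'+p-1\}$ to the interior edges so that $\phi_g(z_1),\dots,\phi_g(z_{p-1})$ are pairwise distinct, and the natural choice is the increasing assignment $a_i = m'+i$ for $1 \leq i \leq p-1$ together with $a_p = m'$. A direct computation gives $\phi_g(z_i) = 2m'+2i+1$ for $i \leq p-2$ (distinct odd numbers) and $\phi_g(z_{p-1}) = 2m'+p-1$, so the interior sums are all distinct \emph{unless} $p$ is even, in which case $\phi_g(z_{p-1})$ coincides with exactly one interior vertex. This parity obstruction is the only genuine difficulty, and I would resolve it by swapping the two largest interior labels, placing $m'+p-1$ on $a_{p-2}$ and $m'+p-2$ on $a_{p-1}$; recomputing the few affected sums shows they split by parity into the even offsets $\{0,\,p-2,\,2p-4\}$ and a set of distinct odd offsets, so all interior sums (and $\phi_g(v)$) become distinct again while the band structure is left intact. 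Assembling the three bands yields that all $\phi_g$-values are distinct, so $g$ is antimagic. I expect the band separation to be routine, and the even-$p$ distinctness of the interior sums — and verifying that the single swap repairs it without creating a new collision — to be the step requiring the most care.
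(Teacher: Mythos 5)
Your proof is correct, but it is organized differently from the paper's. The paper argues by induction on $p$: it subdivides $e_{m'}$ once, gives the new edge at $v$ the label $m'+1$ (so $\phi(v)$ becomes $2m'$, the new subdivision vertex gets $2m'+1$, and every other vertex is unchanged and $\leq 2m'-1$), and then observes that the new subdivision vertex satisfies the hypotheses of the lemma again in the enlarged graph --- it has degree $2$, is incident to the two largest labels $m'$ and $m'+1$, and carries the maximum $\phi$-value $2(m'+1)-1$. Thus only the single-subdivision computation is ever needed; the invariant does the rest, and no parity analysis arises (the labeling the induction unrolls to is $m'+1,\,m'+3,\,m'+5,\dots,\,m'+4,\,m'+2,\,m'$ along the path, which is collision-free automatically). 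You instead write down a one-shot explicit assignment of all $p$ path labels, which is exactly what creates the even-$p$ collision and the need for the swap repair; the verification then rests on the same three-band separation (old vertices $\leq 2m'-1$, $v$ at exactly $2m'$, interior path vertices $\geq 2m'+1$) that drives the paper's base case. Your approach buys a concrete closed-form labeling; the paper's buys brevity and immunity to case analysis. One boundary point to fix in yours: for $p=2$ the claimed collision does not actually occur --- the coincidence $\phi_g(z_{p-1})=\phi_g(z_{(p-2)/2})$ requires $(p-2)/2\geq 1$, i.e., even $p\geq 4$ --- and your swap is not even defined there, since $a_{p-2}=a_0$ is not a path edge; so you should state that the plain increasing assignment already works for $p=2$ and apply the swap only for even $p\geq 4$.
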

\begin{proof}
We prove the result by induction on $p$. Assume $p=2$. Let $f$ be an antimagic labeling of $G'$ such that $f(e_m') = m'$, $f(e_{m'-1}) = m' - 1$, and $\phi(v) = 2m'-1 = \max\{\phi(u) \ | \ u \in V(G')\}$. Let $G^*$ be obtained from $G'$ by subdividing $e_m$ into two edges, called $e_{m'}$ and $e_{m'+1}$, where  $e_{m'+1}$ is incident to $e_{m'-1}$. 
Let $f'$ be a labeling for $G^*$ defined by $f'(e) = f(e)$ if $e \neq e_{m'+1}$, and $f'(e_{m'+1}) = m'+1$. By the assumption, all vertices $u$ not incident to $e_{m'+1}$ have $\phi_{f'}(u) \leq 2m'-1$, while the two vertices incident to $e_{m'+1}$ have distinct vertex sums and both are greater than $2m'-1$. Thus, $f'$ is an antimagic labeling for $G^*$. 

Furthermore, under $f'$, the vertex incident to $e_{m'+1}$ and $e_{m'}$ is a degree-2 vertex in $G^*$ which is incident to the largest labels $m'$ and $m'+1$ and has the maximum $\phi$-value. Therefore, the result follows by induction on $p$. 
\end{proof}

\begin{lemma}
\label{lemma am general} 
Let $p \geq 2$ and $t \geq 0$ be integers. Let $G$ be an $m$-edge $n$-vertex graph that has a degree-2 vertex $v$, where $e_m$ and $e_{m-1}$ are the edges incident to $v$. Let $s=s(G, t)$ and $l=l(G, t)$, as defined in \href{def s} and \href{def l}.  Let $G^*$ be the graph obtained by sub-diving $e_m$ into $p$ edges. If $m \geq n$ and $G \cup t P_3$ admits an antimagic labeling $f$ such that $f(e_m), f(e_{m-1}) \geq t + s - l$,   
then $G^* \cup t P_3$ is antimagic.
\end{lemma}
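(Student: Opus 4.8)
The plan is to reduce the statement to \Href{two largest labels general}. That lemma, applied to $G' = G \cup tP_3$ (an $m'$-edge graph with $m' = m+2t$, in which $v$ is still a degree-2 vertex incident to $e_m$ and $e_{m-1}$), turns an antimagic labeling carrying the two largest labels on $e_m$ and $e_{m-1}$ and having all $\phi$-values at most $2m'-1$ into an antimagic labeling of the graph obtained by subdividing $e_m$ into $p$ edges. Since subdividing $e_m$ inside $G$ and then adjoining $tP_3$ produces exactly $G^* \cup tP_3$, it suffices to exhibit an antimagic labeling $f'$ of $G'$ with $f'(e_m) = m'$, $f'(e_{m-1}) = m'-1$, and $\phi_{f'}(u) \le 2m'-1$ for every vertex $u$.

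The first step is to dispose of the global $\phi$-bound, which turns out to be free. Because $f(e_m)$ and $f(e_{m-1})$ are two \emph{distinct} labels lying in $\{t+s-l,\dots,m'\}$, that interval contains at least two integers, so $m' - (t+s-l) \ge 1$; as $m' = m+2t$ this gives the numerical fact $s - l \le m + t - 1$. Feeding this into \Href{max phi bound} (which bounds the maximum $\phi$-value of \emph{any} antimagic labeling of $G'$, since $s,l$ depend only on $m,n,t$) and using $m \ge n$ yields
\[
\max_{u \in V(G')} \phi(u) \;\le\; n + 3t + s - l \;\le\; n + m + 4t - 1 \;\le\; 2m + 4t - 1 \;=\; 2m' - 1.
\]
Hence the last hypothesis of \Href{two largest labels general} holds for every antimagic labeling of $G'$; the only remaining task is to move the two largest labels onto $e_m$ and $e_{m-1}$ without destroying antimagic-ness.

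To do this I would start from $f$ and swap labels. Let $e^*$ and $e^{**}$ be the edges carrying $m'$ and $m'-1$, and define $f'$ by exchanging the label of $e_m$ with that of $e^*$ and the label of $e_{m-1}$ with that of $e^{**}$ (with the obvious simplifications when $e_m$ or $e_{m-1}$ already carries one of the two top labels, in which case the corresponding endpoints do not move). These swaps raise $\phi(v)$ to $2m'-1$ and raise the sums at the opposite endpoints of $e_m$ and $e_{m-1}$, while lowering the sums at the remaining endpoints of $e^*$ and $e^{**}$; every other vertex sum is unchanged. By the inequality above, all resulting sums are at most $2m'-1$, with $v$ the unique vertex attaining it.

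The main obstacle is exactly the verification that these swaps do not collide two vertex sums with each other. This is where the hypothesis $f(e_m), f(e_{m-1}) \ge t+s-l$ does the real work: it bounds the raised labels away from the bottom, hence bounds the displacements $m' - f(e_m)$ and $(m'-1) - f(e_{m-1})$, so the moved sums cannot leapfrog an uncontrolled range of other sums. Carrying out this bookkeeping — comparing the (at most) three raised vertices and the lowered endpoints of $e^*,e^{**}$ against the unchanged vertices, and handling the degenerate overlaps among $\{e_m,e_{m-1},e^*,e^{**}\}$ — is the routine but delicate technical core. Once $f'$ is shown to be antimagic, it satisfies all hypotheses of \Href{two largest labels general}, which then yields an antimagic labeling of $G^* \cup tP_3$ and completes the proof.
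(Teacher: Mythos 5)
Your reduction runs in the wrong direction, and the gap is fatal rather than a matter of bookkeeping. The labeling $f'$ you aim to construct on $G' = G \cup tP_3$ --- one with $f'(e_m) = m'$ and $f'(e_{m-1}) = m'-1$ --- would have $\phi_{f'}(v) = 2m'-1 = 2m+4t-1$, since $v$ has degree $2$. But \Href{max phi bound} constrains \emph{every} antimagic labeling of $G'$, so if $f'$ were antimagic we would need $2m+4t-1 \leq n+3t+s-l$, i.e.\ $s-l \geq 2m-n+t-1 \geq m+t-1$ (using $m \geq n$). Combined with your own observation that the hypothesis forces $s-l \leq m+t-1$, this pins down $m=n$ and $s-l = m+t-1$ exactly. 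In every other case covered by the lemma, the labeling you are trying to build simply does not exist, so no amount of care in the swapping step can make it antimagic. This is not a corner case: the lemma is used precisely in the regime where $t$ is large and $s-l$ is small. For instance, for $C_5 \cup 13P_3$ (used in \Href{labelings for cycles}) one has $m=n=5$ and $s-l=2$, while $m+t-1 = 17$; your swapped labeling would have $\phi(v) = 61 > 46 = n+3t+s-l$ and hence cannot be antimagic. The Remark in \Href{2regsection} makes this tension explicit: the hypotheses of \Href{two largest labels general} and of \Href{lemma am general} hold in essentially complementary ranges of $t$, so one cannot in general upgrade the latter's hypothesis to the former's on the same graph $G'$.

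The paper's proof sidesteps this by subdividing \emph{first}, so that a brand-new label becomes available: in $G^* \cup tP_3$ there are $m+2t+1$ edges, and the new edge $e_{m+1}$ (incident to $v$) receives the new label $m+2t+1$ while all other edges keep their labels under $f$. The old vertices retain their sums, which are at most $n+3t+s-l$ by \Href{max phi bound}; the two vertices incident to $e_{m+1}$ get sums $f(e_{m-1})+m+2t+1$ and $f(e_m)+m+2t+1$, which are distinct and, by the hypothesis $f(e_m), f(e_{m-1}) \geq t+s-l$ together with $m \geq n$, strictly exceed $n+3t+s-l$. This settles $p=2$; one more such subdivision (new label $m+2t+2$) produces a degree-$2$ vertex that carries the two largest labels of the enlarged graph and attains the maximum $\phi$-value, and only at that point is \Href{two largest labels general} invoked, to handle all $p \geq 4$ by induction. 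Your opening computation (that every antimagic labeling of $G'$ has $\max\phi \leq 2m'-1$) is correct but does not repair the core issue, which is that the intermediate labeling your plan requires is ruled out by the very theorem you use to justify it.
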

\begin{proof}
Assume $p = 2$. Let $f$ be an antimagic labeling of $G \cup tP_3$ such that $f(e_m), f(e_{m-1}) \geq t + s - l$. 
Let $G^*$ be obtained from $G$ by subdividing $e_m$ into two edges, $e_m$ and $e_{m+1}$, where $e_{m+1}$ is incident to $e_{m-1}$. 
Let $f^*$ be a labeling for $G^* \cup tP_3$ defined by $f^*(e) = f(e)$ if $e \neq e_{m+1}$,  and $f^*(e_{m+1}) = m + 2t + 1$. By \Href{max phi bound} all vertices $u$ not incident to $e_{m+1}$ have $\phi(u) \leq n + 3t + s - l$. Since $m \geq n$, the two degree-2 vertices $v$ and $w$ incident to $e_{m+1}$ have $\phi (v), \phi (w) > n + 3t + s - l$ and $\phi(v) \neq \phi(w)$. 

Assume $p = 3$. Let $G^{**}$ be obtained from $G^*$ by subdividing $e_{m+1}$ into two edges, $e_{m+1}$ and $e_{m+2}$, where $e_{m+2}$ is  incident to $e_{m-1}$. 
Let $f^{**}$ be a labeling for $G^{**}$ defined by $f^{**}(e) = f^*(e)$ if $e \neq e_{m+2}$, and $f^{**}(e_{m+2}) = m + 2t + 2$. Similar to the above, it is not difficult to show that $f^{**}$ is an antimagic labeling for $G^{**}$. In addition, under $f^{**}$ the degree-2 vertex incident to $e_{m+2}$ and $e_{m+1}$ is incident to the largest labels, $m + 2t + 1$ and $m + 2t + 2$, and has the maximum $\phi$-value.  
By applying \Href{two largest labels general} to $G^{**}$, \Href{lemma am general} follows. 
\end{proof}


After establishing the above two recursive results for  general graphs, in the remaining of this section we shall focus on 2-regular graphs.  Denote a cycle $C_n$ by  $V(C_n)=\{v_1, \ldots, v_{n}\}$ and $E(C_n) = \{e_1, e_2, \ldots, e_n\}$, where $e_i=v_iv_{i+1}$ for $1 \leq i \leq n-1$, and $e_n=v_{n}v_1$.

\begin{lemma}
\label{3-cycles}
There exist antimagic labelings for $C_n \cup tP_3$, $0 \leq t \leq 6$ and $n\geq3$, such that the two largest labels, $2t+n$ and $2t+n-1$, are assigned to incident edges on $C_n$. 
Consequently, for all $n \geq 3$, $\tau(C_n) \geq 6$.
\label{basecase of lemma 9}
\end{lemma}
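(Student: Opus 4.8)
The plan is to give, for each fixed $t \in \{0,1,\dots,6\}$, an explicit antimagic labeling of $C_n \cup tP_3$ valid simultaneously for all $n \ge 3$, with the two largest labels $n+2t$ and $n+2t-1$ placed on two cycle edges sharing a vertex. I would split the label set $\{1,\dots,n+2t\}$ into a \emph{path band} $L_t$ of $2t$ labels carried by the $2t$ path edges and a \emph{cycle band} $\{1,\dots,n+2t\}\setminus L_t$ of $n$ labels carried by $C_n$, engineered so that every cycle-vertex $\phi$-value strictly exceeds every path $\phi$-value; then the only coincidences left to rule out are \emph{within} the cycle and \emph{within} the paths. Note that the recursive tool \Cref{lemma am general} is unavailable here: for a cycle $m=n$, so $s-l=\tfrac12\bigl(-t^2+(2n+1)t+n(n+1)\bigr)$ grows like $n^2/2$ and far exceeds the largest label $n+2t$, so its hypothesis $f(e_{m-1})\ge t+s-l$ can never hold. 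This is precisely why one is forced into direct constructions rather than subdivision.

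For the cycle I would use a near-consecutive \emph{zigzag} assignment, so consecutive edges carry consecutive labels and the sums $\phi(v_i)=f(e_{i-1})+f(e_i)$ run through an essentially arithmetic progression of distinct values. The single wraparound coincidence that occurs when $n$ is even is removed by one local swap, exactly as in the base case $t=0$: the labeling $e_i\mapsto i$ is already antimagic for odd $n$, while for even $n$ swapping the labels on $e_1,e_2$ repairs the unique collision. I would place the two top labels on $e_{n-1},e_n$, which meet at $v_n$, securing the required incidence; the few smallest labels of the cycle band are then positioned so their pairwise sums stay above the path band and remain mutually distinct.

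The main obstacle is the path band. One cannot simply hand the paths the interval $\{1,\dots,2t\}$: the leaf $\phi$-values are exactly the $2t$ path-edge labels while the center $\phi$-values are the $t$ in-path pair sums, and the interval $\{1,\dots,2t\}$ has total $t(2t+1)$, forcing the average pair sum to be $2t+1$, so the $t$ distinct center sums cannot all exceed $2t$ and at least one collides with a leaf. (For $t=2$ one checks directly that \emph{no} pairing of $\{1,2,3,4\}$ avoids a center--leaf collision.) I would therefore choose, for each of the seven values of $t$, a tailored set $L_t$ and an explicit pairing so that the $2t$ leaf values and the $t$ center sums are $3t$ \emph{distinct} integers bounded by a constant $B_t$ independent of $n$; for example $t=2$ is handled by $L_2=\{1,2,3,5\}$ with pairs $\{1,3\},\{2,5\}$, yielding leaves $1,2,3,5$ and centers $4,7$, with complement cycle band $\{4,6,7,\dots,n+4\}$ whose two smallest labels sum to $10>B_2=7$. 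This last inequality is the design principle: $L_t$ must absorb enough of the smallest labels that the two smallest cycle-band labels sum past $B_t$, guaranteeing every cycle-vertex sum exceeds every path value.

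The remaining work is verification and boundary cases. For each $t$ I would confirm that the handful of small labels the gadget leaves on the cycle do not produce a low cycle-vertex sum (this is the one point where the domination could fail and must be checked individually), dispose of the parity of $n$ through the single swap above, and treat the smallest cycles $n=3,4$ separately, where there is the least room and the forced incidence of the two top labels is most restrictive. Once all seven labelings are exhibited and verified, $C_n \cup t'P_3$ is antimagic for every $0\le t'\le 6$, and hence $\tau(C_n)\ge 6$ for all $n\ge 3$, which is the asserted consequence.
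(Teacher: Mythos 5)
Your stated reason for abandoning subdivision rests on a misreading of which recursive tool applies. You are right that \Href{lemma am general} is out of reach here (your computation of $s-l$ for a $2$-regular graph is correct), but the relevant tool is \Href{two largest labels general}, whose hypotheses are nearly free in this setting: in $C_3 \cup tP_3$ every vertex has degree at most $2$, so $\phi_f(u) \leq m' + (m'-1) = 2m'-1$ holds automatically, and the only substantive requirement is an antimagic labeling of $C_3 \cup tP_3$ with labels $m'$ and $m'-1$ on two incident cycle edges. That is exactly how the paper proves this lemma: it exhibits seven such labelings of $C_3 \cup tP_3$, $0 \leq t \leq 6$ (\Href{c3 labeling}), and then subdivides the edge labeled $m'$ into $n-2$ edges to obtain $C_n \cup tP_3$ for every $n \geq 3$, the subdivision lemma preserving the property that the two largest labels sit on incident cycle edges. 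So one is not ``forced into direct constructions rather than subdivision''; seven finite tables plus an existing lemma finish the job.

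The direct route you propose instead has a genuine gap, and it sits precisely in the cases you defer. Your design principle --- every cycle-vertex sum strictly exceeds every path value --- is provably unachievable for $(n,t)=(3,6)$ and $(4,6)$ once the lemma's requirement that the two largest labels lie on the cycle is imposed. For $n=3$, $t=6$: the cycle carries $\{a,14,15\}$ with $a \leq 13$, the leaves are $\{1,\dots,13\}\setminus\{a\}$ with sum $91-a$, and the six center sums must be distinct, avoid all leaf values, and stay below the minimum cycle sum $a+14$; the only integers below $a+14$ that are not leaf values are $\{a\}\cup\{14,15,\dots,a+13\}$, so having six of them forces $a \geq 5$, while comparing the smallest possible center total $a+14+15+16+17+18=a+80$ with $91-a$ forces $a \leq 5$; at $a=5$ the centers are forced to be exactly $\{5,14,15,16,17,18\}$ with total $85 \neq 86$. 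The same count gives $85 \neq 86$ for $n=4$, $t=6$. So these cases cannot be ``treated separately'' within your framework at all; they require ad hoc labelings in which path values interleave with the cycle sums (as in the paper's $t=6$ entry, where path centers $19$ and $21$ exceed the cycle sums $17$ and $18$), and you never produce them. Beyond this, the proposal exhibits only the $t=2$ gadget: the sets $L_t$, the pairings, and the wraparound swap rules for $t=3,4,5,6$ are asserted to exist but not constructed (and their existence is not routine --- natural candidates run into parity and counting obstructions of exactly the above kind), so even for large $n$ what you have is a plan rather than a proof.
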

\begin{proof}
By  \Href{two largest labels general}, it suffices to show the existence of antimagic labelings for $C_3 \cup tP_3$, $0 \leq t \leq 6$, such that the two largest labels, $2t+3$ and $2t+2$, are assigned to incident edges on $C_3$. Such labelings are given in \Href{c3 labeling}.  In the table, for each $t$, the labeling $f_t$ consists of a 3-tuple $(f_t(e_1)$,  $f_t(e_2)$, $f_t(e_3))$ for  $E(C_3)$ (where the two largest labels, $2t+3$ and $2t+2$, are underlined) and  $t$ pairs of labels for the  $3$-paths. An example is illustrated in \Href{fig:ex c3+4p3}. 
\end{proof}

\begin{table}[hbtp!]
\centering
\fontsize{10pt}{9pt}\selectfont
\begin{tabular}{ |c|p{4.5cm}|l| } 
\hline
\multirow{2}{*}{$t$} &$f_t(e_1, e_2, e_3)$, $\{\phi(v_1), \phi(v_2), \phi(v_3)\}$& \multicolumn{1}{c|}{\multirow{2}{*}{Pairs of labels on $P_3$ with  their sums}} \\ \hline
0 & (1, \underline{2}, \underline{3}), \{3, 4, 5\} & \\ \cline{1-3}
1 & (3, \underline{4}, \underline{5}), \{7, 8, 9\} & (1, 2 $|$ 3)  \\ \hline
2 & (4, \underline{6}, \underline{7}), \{10, 11, 13\} & (1, 3 $|$ 4) (2, 5 $|$ 7)  \\ \hline
3 & (5, \underline{8}, \underline{9}), \{13, 14, 17\} & (1, 4 $|$ 5) (2, 6 $|$ 8) (3, 7 $|$ 10)  \\ \hline
4 & (6, \underline{10}, \underline{11}), \{16, 17, 21\} & (1, 5 $|$ 6) (2, 8 $|$ 10) (4, 7 $|$ 11) (3, 9 $|$ 12)  \\ \hline
5 & (6, \underline{12}, \underline{13}), \{18, 19, 25\} & (1, 5 $|$ 6) (2, 10 $|$ 12) (4, 9 $|$ 13) (3, 11 $|$ 14) (7, 8 $|$ 15)  \\ \hline
6 & (3, \underline{14}, \underline{15}), \{17, 18, 29\} & (1, 2 $|$ 3) (4, 10 $|$ 14) (6, 9 $|$ 15) (5, 11 $|$ 16) (7, 12 $|$ 19) (8, 13 $|$ 21)  \\ \hline
\end{tabular}
\caption{Antimagic labelings for $C_3 \cup tP_3$, $0 \leq t \leq 6$.}
\label{c3 labeling}
\end{table}

\begin{figure} 
    \centering
    \begin{tikzpicture}
        \begin{scope}[shift={(-3,-0.55)}, scale=1.5]
            \newcommand{\ls}{0.15} 
            \newcommand{\s}{0.08}
            \newcommand{\ox}{0.3} 
        
            \draw (1,0.5) -- (0,1) -- (0,0) -- (1,0.5);

            \draw[fill=black] (0, 0) circle (\s);
	        \draw[fill=black] (1, 0.5) circle (\s);
	        \draw[fill=black] (0, 1) circle (\s);
	        
	        \footnotesize
	        \draw (0, 0 - \ox) circle (\ls) node {17};
            \draw (1 + \ox, 0.5) circle (\ls) node {18};
            \draw (0, 1 + \ox) circle (\ls) node {29};
	    
	   	    \node[left]  at (0, 0.5) {\underline{14}};
            \node[above] at (0.5, 0.8) {\underline{15}};
            \node[below] at (0.5, 0.2) {$3$};
        \end{scope}
        
        \newcommand{\s}{0.09} 
        \newcommand{\ls}{0.25} 
        \newcommand{\ox}{0.45} 
        \newcounter{o} 
        \foreach \u\l\p in {1/2/3, 4/10/14, 6/9/15, 5/11/16, 7/12/19, 8/13/21}{
	        \draw(1.5*\theo, -1) -- (1.5*\theo,1);
	        \draw[fill=black] (1.5*\theo, 1) circle (\s);
	        \draw[fill=black] (1.5*\theo, 0) circle (\s);
	        \draw[fill=black] (1.5*\theo,-1) circle (\s);
	        
	        \footnotesize
	        \draw (1.5*\theo + \ox, 1) circle (\ls) node {\u};
	        \draw (1.5*\theo + \ox, 0) circle (\ls) node {\p};
	        \draw (1.5*\theo + \ox, -1) circle (\ls) node {\l};
	        \node[left] at (1.5*\theo, 0.5) {\u};
	        \node[left] at (1.5*\theo,-0.5) {\l};
	        
	        \stepcounter{o}
	    }
	    \setcounter{o}{0} 
	\end{tikzpicture}
    \caption{An antimagic edge labeling of $C_3 \cup 6P_3$. Circled numbers are  $\phi$-values while other  numbers are edge labels. The two largest labels are underlined.}
    \label{fig:ex c3+4p3}
\end{figure}
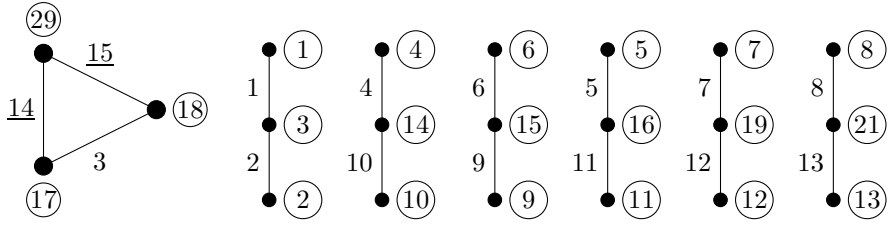

\noindent
{\bf Remark.}  Consider the graph $G \cup tP_3$ where $G$ is a 2-regular graph with $n$ vertices. Then $|E(G)| = n$ and $m' = |E(G \cup tP_3)| = n+2t$. Suppose the assumptions  of \Href{two largest labels general} hold for $G \cup tP_3$, that is, there exists an antimagic labeling $f$ for $G \cup tP_3$ and a vertex $v \in V(G)$ with $\phi(v) = 2m'-1 = 2n+4t-1$. By \Href{max phi bound},  $\phi(v) \leq n+3t+s-l$ where $s=s(G, t)$ and $l=l(G, t)$. Hence, we obtain  
$s - l + 1 \geq n+t$, 
a quadratic inequality in $t$ with a positive solution:
\begin{align}
t & \leq n - \frac{1}{2} + \sqrt{(\sqrt{2}n - \frac{1}{\sqrt{2}})^2 + \frac{7}{4}} \label{lower bound of tau} \\
  & = (1 + \sqrt{2})(n - \frac{1}{2}) + O(\frac{1}{n}). 
\label{lower bound of tau appx}
\end{align}
Now suppose the assumptions  of \Href{lemma am general} hold for $G \cup tP_3$. Then there exist an antimagic labeling $f$ for $G \cup tP_3$ and an edge $e \in E(G)$ with $ t + s - l + 1 \leq f(e) \leq n+2t$, where $s=s(G, t)$ and $l=l(G, t)$. We then obtain  
$t + s - l + 1 \leq n+2t$, the reverse of \href{lower bound of tau}. 
Note that when the equality in \href{lower bound of tau} holds then $m' = t+s-l+1$, implying the assumptions of \Href{lemma am general} and \Href{two largest labels general} are equivalent.  
In conclusion, for a 2-regular graph $G$, to investigate possible $t$ values, $0 \leq t \leq \beta(G) = (1+\sqrt{2})(n + \frac{1}{2})$, one might consider the following two sub-intervals:   
$$
[0, (1+\sqrt{2})(n + \frac{1}{2})] = 
[0, (1+\sqrt{2})(n - \frac{1}{2})] \cup [(1+\sqrt{2})(n - \frac{1}{2}), (1+\sqrt{2})(n + \frac{1}{2})].
$$
In the first sub-interval, $0 \leq t  \leq (1+\sqrt{2})(n - \frac{1}{2})$, it might be possible to find a labeling satisfying the conditions of \Href{two largest labels general} (i.e., the largest two labels are assigned to incident two edges). Likewise for the second sub-interval it might be possible to find a labeling satisfying the conditions of \Href{lemma am general}. 

From the above discussion, for a 2-regular graph $G$, if there exists  an antimagic labeling $f$ for $G \cup tP_3$ satisfying the assumptions of \Href{lemma am general}, then $t \geq (1+\sqrt{2}) (n-\frac{1}{2})$. In the following result we prove that the converse of this also holds for small values of $n$.

\begin{lemma}
\label{labelings for cycles}
Let $n$ and $t$ be integers such that $3 \leq n \leq 9$ and 
$$(1 + \sqrt{2})(n - \frac{1}{2}) \leq t \leq (1 + \sqrt{2})(n + \frac{1}{2}).
$$
Then $C_n \cup tP_3$ admits an antimagic  labeling such that there exist two incident edges on $C_n$ receiving labels that are at least $t+s-l$, where $s$ and $l$ are defined in \href{def s} and \href{def l}, respectively.
\end{lemma}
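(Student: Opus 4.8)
The plan is to treat this as a finite verification. The interval $[(1+\sqrt2)(n-\tfrac12),\,(1+\sqrt2)(n+\tfrac12)]$ has length $1+\sqrt2<3$, so for each fixed $n\in\{3,\dots,9\}$ it contains only two or three integers $t$ (as already observed just before \Href{labelings for cycles}); altogether this is about sixteen pairs $(n,t)$. For each such pair I would exhibit one explicit antimagic labeling of $C_n\cup tP_3$ and record it in a table in the style of \Href{c3 labeling}. Before building anything, I would pin down the numeric target. Since $C_n$ is $2$-regular we have $m=n$, so $m'=n+2t$, and substituting into \href{def s} and \href{def l} gives the closed form $t+s-l=\tfrac12\bigl(-t^2+(2n+3)t+n(n+1)\bigr)$. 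This downward parabola in $t$ is maximized at $t=n+\tfrac32$, which lies below $(1+\sqrt2)(n-\tfrac12)$ for all $n\geq3$; hence on the admissible range $t+s-l$ is decreasing and is largest at the lower endpoint.

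The key simplification is that the required threshold is never larger than $m'-1$. Indeed, the lower bound $t\geq(1+\sqrt2)(n-\tfrac12)$ in the hypothesis is, up to integer rounding, exactly the reverse of \eqref{lower bound of tau} derived in the Remark preceding the lemma, which states that under the assumptions of \Href{lemma am general} one has $t+s-l+1\leq m'$, that is $t+s-l\leq m'-1$. A short evaluation of the closed form at each lower endpoint (e.g.\ it is $13$ against $m'-1=16$ for $(n,t)=(3,7)$, and tight at $26=m'-1$ for $(n,t)=(5,11)$) confirms this for every admissible pair. Consequently it suffices to produce a labeling placing the two largest labels $m'$ and $m'-1$ on two incident cycle edges: these automatically satisfy the ``at least $t+s-l$'' requirement, and this is precisely the base-case configuration consumed by \Href{lemma am general}.

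For the construction itself, I would set $f(e_1)=m'$ and $f(e_2)=m'-1$, so that $\phi(v_2)=2m'-1$ is the maximum possible vertex sum, and then distribute the labels $[1,m'-2]$ among the remaining $n-2$ cycle edges and the $2t$ path edges so that every vertex sum is distinct. The aim is to make all $n+t$ ``sum'' vertices (the cycle vertices $v_j$ and the path centers $w_i$) pairwise distinct and greater than $m'$, while the $2t$ path leaves automatically receive the distinct single-label values in $[1,m']$. Here \Href{le par} is the workhorse: feeding it blocks of consecutive integers yields blocks of consecutive, hence distinct, vertex sums, which lets me certify distinctness in bulk rather than comparing sums one pair at a time. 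For the tight endpoints such as $(n,t)=(5,11)$, where $t+s-l=m'-1$ and the only admissible labels are $m'$ and $m'-1$, the construction is forced to use exactly these two; such cases pin down the hardest table entries and guide the rest.

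The main obstacle is the bookkeeping of these roughly sixteen explicit labelings. For the larger cycles (e.g.\ $n=9$ with $t\approx22$) one must display and verify a labeling on $m'=n+2t\approx53$ edges whose $n+t\approx31$ sum-vertices are pairwise distinct and all exceed $m'$; there is no single closed-form labeling uniform across the tight lower endpoints, so each case needs its own assignment. The benefit of routing the distribution through \Href{le par} is that it reduces each per-case check to verifying that a few consecutive-sum blocks do not overlap one another, rather than an exhaustive pairwise comparison of all vertex sums. Once the tables are displayed and these non-overlap checks are carried out, the lemma follows.
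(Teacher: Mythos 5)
Your overall strategy (a finite, case-by-case table of explicit labelings) is indeed how the paper proves this lemma, and your closed form $t+s-l=\tfrac12\bigl(-t^2+(2n+3)t+n(n+1)\bigr)$ together with the check that $t+s-l\leq m'-1$ on every admissible pair is correct. However, the ``key simplification'' you draw from this is exactly backwards, and it breaks the construction. If the two largest labels $m'$ and $m'-1$ sit on two incident cycle edges, their common degree-2 vertex has $\phi$-value $2m'-1=2n+4t-1$. By \Href{max phi bound}, \emph{any} antimagic labeling of $C_n\cup tP_3$ satisfies $\max_v\phi(v)\leq n+3t+s-l$, so your placement is compatible with antimagicness only if $2n+4t-1\leq n+3t+s-l$, i.e.\ only if $t+s-l\geq m'-1$. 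Combined with your own inequality $t+s-l\leq m'-1$, your anchor configuration can work only when $t+s-l=m'-1$ exactly, which among the sixteen admissible pairs happens only for $(n,t)=(5,11)$. For the other fifteen pairs --- e.g.\ $(n,t)=(3,7)$, where $t+s-l=13<16=m'-1$, so $2m'-1=33$ exceeds the cap $n+3t+s-l=30$ --- no labeling with $m'$ and $m'-1$ on incident cycle edges can be antimagic, no matter how the remaining labels are distributed; the second and third paragraphs of your plan therefore cannot be carried out.

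This is precisely the point of the Remark preceding the lemma: the hypothesis $t\geq(1+\sqrt2)(n-\tfrac12)$ marks the regime where the ``two largest labels'' configuration of \Href{two largest labels general} becomes infeasible (the reverse of \eqref{lower bound of tau}), which is exactly why \Href{lemma am general} only demands two incident labels that are $\geq t+s-l$ --- a strictly weaker requirement that must, in general, be met by labels other than $\{m'-1,\,m'\}$. The paper's \Href{am labeling} does exactly this: for $(n,t)=(3,7)$ the underlined incident cycle labels are $13$ and $17$, not $16$ and $17$, and the largest vertex sum is $30$, meeting the cap of \Href{max phi bound} with equality. So the fix is not cosmetic: for each non-tight pair you need an ad hoc antimagic labeling in which two incident cycle edges carry labels from $[t+s-l,\,m']$ while the sum at their common vertex stays at most $n+3t+s-l$; routing block sums through \Href{le par} may still help organize the bookkeeping, but the anchor configuration itself must change, and this is where the real (computer-assisted) work of the paper's proof lies.
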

\begin{proof} 
For $3 \leq n \leq 9$ and $(1 + \sqrt{2})(n - \frac{1}{2}) \leq t \leq (1 + \sqrt{2})(n + \frac{1}{2})$, we give a list of such labelings in \Href{am labeling} 
in the Appendix. These labelings were constructed with computer assistance. For each $n$ and $t$, the labeling $f_t$ consists of an $n$-tuple $(f_t(e_1)$, $f_t(e_2)$, $\cdots$, $f_t(e_n))$ for $E(C_n)$ (where two adjacent labels greater than or equal to $t+s-l$ are underlined) and $t$ pairs of labels for the 3-paths. See \Href{fig:ex c5+13p3} as an example.
\end{proof}

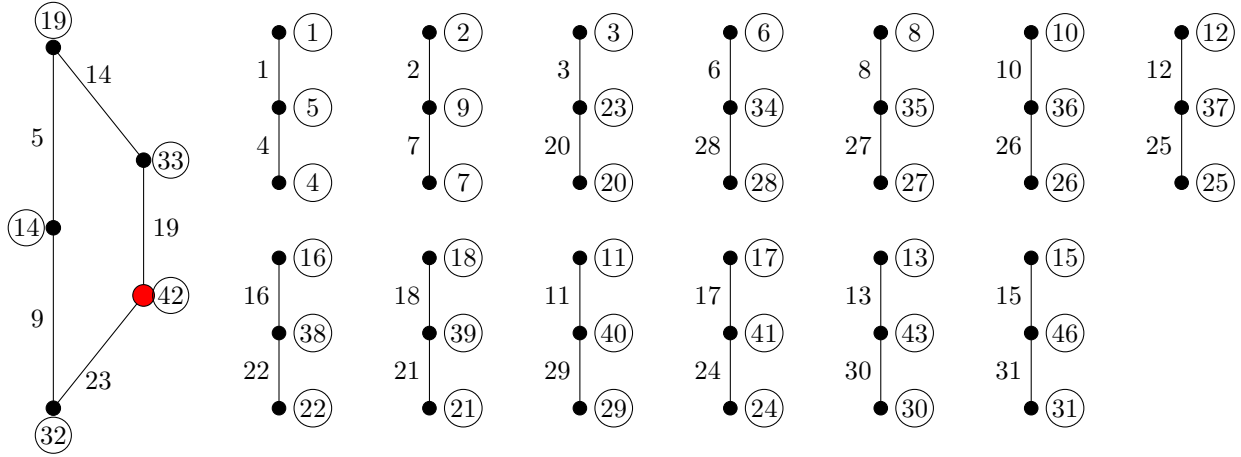
\begin{figure}
    \centering
    \begin{tikzpicture}
        \begin{scope}[shift={(-3,-4)}, scale=1.2]
            \newcommand{\ls}{0.2} 
            \newcommand{\s}{0.08} 
            \newcommand{\ox}{0.3} 
        
            \draw (0,0)-- (1,1.25) -- (1,2.75) -- (0,4)-- (0,2) -- (0,0);

            \draw[fill=black] (0, 0) circle (\s);
	        \draw[fill=red] (1, 1.25) circle (1.5*\s);
	        \draw[fill=black] (1, 2.75) circle (\s);
	        \draw[fill=black] (0, 4) circle (\s);
	        \draw[fill=black] (0,2) circle (\s);
	        
	        \footnotesize
	        \draw (0, 0 - \ox) circle (\ls) node {32};
            \draw (1 + \ox, 1.25) circle (\ls) node {42};
            \draw (- \ox, 2 ) circle (\ls) node {14};
            \draw (0, 4 + \ox) circle (\ls) node {19};
            \draw (1+ \ox,2.75 ) circle (\ls) node {33};
	    
	   	    \node[left]  at (0, 1) {9};
	   	    \node[left]  at (0, 3) {5};
	   	     \node[above] at (0.5, 3.5){14};
            \node[right]  at (1, 2) {19};
            \node[below] at (0.5, 0.5) {23};
        \end{scope}
     
        \newcommand{\s}{0.09} 
        \newcommand{\ls}{0.25} 
        \newcommand{\ox}{0.45} 
        \newcounter{oy} 
        \foreach \u\l\p in {1/4/5, 2/7/9, 3/20/23, 6/28/34, 8/27/35, 10/26/36, 12/25/37, 16/22/38, 18/21/39, 11/29/40, 17/24/41, 13/30/43, 15/31/46}{
	        \draw(\theo, -1 + \theoy) -- (\theo, 1 + \theoy);
	        \draw[fill=black] (\theo, 1 + \theoy) circle (\s);
	        \draw[fill=black] (\theo, 0 + \theoy) circle (\s);
	        \draw[fill=black] (\theo,-1 + \theoy) circle (\s);
	        
	        \footnotesize
	        \draw (\theo + \ox, 1 + \theoy) circle (\ls) node {\u};
	        \draw (\theo + \ox, 0 + \theoy) circle (\ls) node {\p};
	        \draw (\theo + \ox, -1 + \theoy) circle (\ls) node {\l};
	        \node[left] at (\theo, 0.5 + \theoy) {\u};
	        \node[left] at (\theo,-0.5 + \theoy) {\l};
	        
	        \addtocounter{o}{2}
	        \ifnum\theo=14
	            \addtocounter{oy}{-3}
	            \setcounter{o}{0}
	        \fi
	    }
	    
	\end{tikzpicture}
    \caption{An antimagic edge labeling of $C_5 \cup 13P_3$. The vertex satisfying the hypotheses of \Href{lemma am general} is bolded and colored red.}
    \label{fig:ex c5+13p3}
\end{figure}

Combining 
\Href{lemma am general}, \Href{basecase of lemma 9}, and \Href{labelings for cycles}, we obtain: 

\begin{corollary}
\label{20}
Let $n$ and $t$ be integers such that $t \leq \min\{22, \lfloor (1 + \sqrt{2})(n + \frac12) \rfloor\}$. 
Then $C_n \cup t P_3$ is antimagic.
\end{corollary}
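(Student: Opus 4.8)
The plan is to assemble \Href{20} from the three preceding lemmas by fixing $t$ and letting the cycle length grow through edge subdivision. First I would rewrite the hypothesis in a more convenient form. Since $t$ is an integer, $t \leq \lfloor(1+\sqrt{2})(n+\frac{1}{2})\rfloor$ is equivalent to $(1+\sqrt{2})(n+\frac{1}{2}) \geq t$, which in turn is equivalent to $n \geq n_0(t)$, where $n_0(t)$ denotes the least integer $n \geq 3$ with $(1+\sqrt{2})(n+\frac{1}{2}) \geq t$. Thus the condition ``$t \leq \min\{22,\ \lfloor(1+\sqrt{2})(n+\frac{1}{2})\rfloor\}$'' is exactly ``$t \leq 22$ and $n \geq n_0(t)$,'' and it suffices to prove, for each integer $t \in [0,22]$, that $C_n \cup tP_3$ is antimagic for every $n \geq n_0(t)$.

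Second, I would dispatch the small-$t$ regime directly. For $0 \leq t \leq 6$ we have $n_0(t)=3$, and \Href{basecase of lemma 9} already supplies antimagic labelings of $C_n \cup tP_3$ for all $n \geq 3$, so nothing more is needed. For the remaining range $7 \leq t \leq 22$ I would establish a base case at the minimal length $n_0(t)$ and then propagate upward. The numerical engine here is that the intervals $[(1+\sqrt{2})(n-\frac{1}{2}),\,(1+\sqrt{2})(n+\frac{1}{2})]$ tile the half-line: consecutive intervals share an endpoint and each has length $1+\sqrt{2}$, so every real $t \geq (1+\sqrt{2})\cdot\tfrac{5}{2}$ lies in the interval indexed by precisely $n_0(t)$. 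A short check shows $n_0(t)\in\{3,\dots,9\}$ for all $7 \leq t \leq 22$ (in particular $n_0(22)=9$), so \Href{labelings for cycles} applies at $n=n_0(t)$ and furnishes an antimagic labeling of $C_{n_0(t)}\cup tP_3$ with two incident edges carrying labels at least $t+s-l$.

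This is exactly the hypothesis of \Href{lemma am general}, with $G=C_{n_0(t)}$ having $m=n=n_0(t)$ so that $m\geq n$ holds, and with the same $s$ and $l$. Subdividing one of those two incident edges into $p$ edges turns $C_{n_0(t)}$ into $C_{n_0(t)+p-1}$, so ranging $p$ over $\{2,3,\dots\}$ yields antimagic labelings of $C_n \cup tP_3$ for every $n > n_0(t)$; together with the base case this covers all $n \geq n_0(t)$, completing the fixed-$t$ argument and hence the corollary.

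The main obstacle is not any single estimate but the bookkeeping that glues the lemmas together. One must confirm that the conclusion of \Href{labelings for cycles} meets the hypothesis of \Href{lemma am general} verbatim---same $s$, same $l$, same ``two incident edges $\geq t+s-l$'' condition---so that the recursion may be invoked without loss, and that the narrow per-$n$ windows of \Href{labelings for cycles} catch exactly the minimal length $n_0(t)$ for each $t$. The latter is also what pins down the ceiling $22$: one computes $n_0(23)=10$, which falls outside the range $3\leq n\leq 9$ covered by \Href{labelings for cycles}, so the base case for $t=23$ is unavailable and the method stalls---precisely what the $\min\{22,\cdot\}$ records. A final point to verify is that subdividing into an arbitrary $p\geq2$ reaches \emph{every} $n>n_0(t)$ in one application rather than forcing an induction, which is guaranteed because \Href{lemma am general} permits $p$ to be any integer at least $2$.
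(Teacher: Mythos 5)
Your proposal is correct and follows exactly the paper's (one-line) proof: the paper likewise combines \Href{basecase of lemma 9} for $0\leq t\leq 6$, \Href{labelings for cycles} as the base case at the minimal cycle length in each window $[(1+\sqrt{2})(n-\frac12),(1+\sqrt{2})(n+\frac12)]$, and \Href{lemma am general} to subdivide up to all larger $n$; you have simply made explicit the tiling/bookkeeping that the paper leaves implicit, including why the cap is $22$ (namely $\beta(C_9)=22$ and the table stops at $n=9$).
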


As  $\beta(C_9) =22$, \Href{20} implies:
  
\begin{theorem}
\label{cycles} 
For $3 \leq n \leq 9$,  $\tau(C_n) = \beta(C_n)$.  
Equivalently, for $3 \leq n \leq 9$, $C_n \cup tP_3$ is antimagic if and only if 
$$
0 \leq t \leq \left\lfloor (1 +  \sqrt{2})(n + \frac{1 }{2})    \right\rfloor.  
$$
\end{theorem}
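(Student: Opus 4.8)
The plan is to read the theorem off the two one-sided estimates that have already been assembled, so the entire argument is a short synthesis. For the upper direction I would quote \Href{cycle}: since $C_n$ is $2$-regular we have $m=n=k$, and that corollary gives $\tau(C_n) \le \beta(C_n) = \lfloor(1+\sqrt2)(n+\tfrac12)\rfloor$. Equivalently, \Href{general bound} says that if $C_n \cup tP_3$ is antimagic then $t \le \beta(C_n)$, which is exactly the ``only if'' half of the biconditional. Thus everything reduces to producing antimagic labelings of $C_n \cup tP_3$ for \emph{every} $t$ in the full range $0 \le t \le \beta(C_n)$.

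For that (the ``if'' half, hence the lower bound $\tau(C_n) \ge \beta(C_n)$) I would invoke \Href{20}, which asserts that $C_n \cup tP_3$ is antimagic whenever $t \le \min\{22,\ \beta(C_n)\}$. The only thing to verify is that the cap $22$ is vacuous on the stated range of $n$: since $(1+\sqrt2)(n+\tfrac12)$ is increasing in $n$, for $3 \le n \le 9$ we have $\beta(C_n) \le \beta(C_9) = \lfloor(1+\sqrt2)\cdot\tfrac{19}{2}\rfloor = 22$, so $\min\{22,\beta(C_n)\} = \beta(C_n)$. Consequently \Href{20} already supplies labelings for the entire interval $[0,\beta(C_n)]$, giving $\tau(C_n) \ge \beta(C_n)$ and hence $\tau(C_n) = \beta(C_n)$. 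The biconditional formulation then follows verbatim from \Href{coro main}, which guarantees that once $\tau(G)=\beta(G)$ the converse direction of antimagicness holds automatically.

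Because \Href{20} carries the real weight and may be taken as given, the deduction above is almost entirely bookkeeping; the single nontrivial check is the equality $\beta(C_9)=22$, and I would expect this to be precisely where the range restriction $n \le 9$ is \emph{forced} rather than accidental. To make that dependence transparent I would recall how \Href{20} itself is built, since that is where the genuine obstacle lies: at a fixed $t$ one combines the small-$t$ base labelings of \Href{basecase of lemma 9} (all $n \ge 3$, but only $t \le 6$), the computer-assisted ``top-window'' base labelings of \Href{labelings for cycles} (only for the finitely many $3 \le n \le 9$, and only for $t$ in the window $(1+\sqrt2)(n_0-\tfrac12) \le t \le (1+\sqrt2)(n_0+\tfrac12)$), and the length-increasing subdivision recursion of \Href{lemma am general}. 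The delicate point, already settled inside \Href{20}, is that for each target pair $(n,t)$ with $t \le \beta(C_n)$ one can select a base cycle $C_{n_0}$ with $n_0 \le n$ whose top window contains $t$ (the consecutive windows share endpoints and together cover every integer $t \ge 7$) and then subdivide up to $C_n$ without a gap. It is exactly the absence of top-window base cases beyond $n_0 = 9$ that stalls the method at $\beta(C_9)=22$ and confines the theorem to $3 \le n \le 9$.
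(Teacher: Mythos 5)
Your proposal is correct and takes essentially the same route as the paper, whose proof of this theorem is exactly the one-line synthesis you describe: the upper bound comes from \Href{cycle}, and the lower bound comes from \Href{20} together with the computation $\beta(C_9)=\lfloor(1+\sqrt{2})\cdot\frac{19}{2}\rfloor=22$, which makes the cap of $22$ vacuous for $3 \leq n \leq 9$. Your supplementary account of how \Href{20} is assembled from \Href{basecase of lemma 9}, \Href{labelings for cycles}, and \Href{lemma am general}, and of why the restriction $n \leq 9$ is forced by the absence of base labelings beyond $C_9$, also matches the paper's structure.
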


We conjecture that the result of \Href{cycles} holds for all $n$. 

\begin{conjecture}
\label{tau beta for Cn}
For any $n$, $\tau(C_n)=\beta(C_n)$.
\end{conjecture}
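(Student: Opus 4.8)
The plan is to pair the upper bound $\tau(C_n)\leq\beta(C_n)=\lfloor(1+\sqrt2)(n+\tfrac12)\rfloor$ of \Href{cycle} with a matching lower bound, namely that $C_n\cup tP_3$ is antimagic for every integer $t$ with $0\leq t\leq\beta(C_n)$. The two recursive subdivision results \Href{two largest labels general} and \Href{lemma am general} will carry out all of the propagation, exactly as in the derivation of \Href{cycles} for $3\leq n\leq 9$; the whole problem then collapses onto a single family of base cases, and that is where the difficulty lies.

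First I would make the reduction precise. Fix $t$ and let $n'=n'(t)$ be the least integer with $t\leq\beta(C_{n'})$; then $t$ sits at the top of the admissible range for $C_{n'}$, i.e. in the sub-interval $[(1+\sqrt2)(n'-\tfrac12),(1+\sqrt2)(n'+\tfrac12)]$ singled out in the Remark preceding \Href{labelings for cycles} (via \href{lower bound of tau}--\href{lower bound of tau appx}). Suppose that for this $n'$ one can exhibit an antimagic labeling of $C_{n'}\cup tP_3$ in which two \emph{incident} edges of the cycle carry labels $\geq t+s-l$, where $s=s(C_{n'},t)$ and $l=l(C_{n'},t)$ --- call this a \emph{seed}. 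Since $C_{n'}$ is $2$-regular we have $m=n$ in \Href{lemma am general}, so subdividing one of those two edges into $p=n-n'+1$ pieces turns $C_{n'}$ into $C_n$ and, by \Href{lemma am general}, yields an antimagic labeling of $C_n\cup tP_3$ for every $n>n'$; for $n=n'$ the seed itself is the desired labeling. In the small-$t$ regime $t\leq 6$ (where $n'(t)=3$ but $t$ may lie below the top sub-interval) the conclusion is supplied directly, for all $n$, by \Href{basecase of lemma 9}. Granting seeds for every $n'$, we obtain $C_n\cup tP_3$ antimagic for all $t\leq\beta(C_n)$ and all $n$, which together with \Href{cycle} proves \Href{tau beta for Cn}.

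The real content --- and the main obstacle --- is therefore the uniform construction of seeds: for each $n'\geq 3$ and each of the (at most three) integers $t$ in the top sub-interval, an antimagic labeling of $C_{n'}\cup tP_3$ with two incident cycle edges labeled $\geq t+s-l$. The cases $n'\leq 9$ are exactly the computer-assisted labelings of \Href{labelings for cycles}, and the goal is to replace them with an explicit scheme valid for all $n'$. I would model it on the jellyfish proof: reserve the largest labels $m'=n'+2t$ and $m'-1$ (or, in the interior of the top interval, labels $\geq t+s-l$) for two adjacent cycle edges so as to pin the maximum $\phi$-value through \Href{max phi bound}, and then invoke the partition lemma \Href{le par} to split the remaining labels so that the $n'$ cycle vertices receive one block of consecutive (hence distinct) $\phi$-values, the $t$ path-centers a second block, and the $2t$ path-leaves --- whose $\phi$-values equal their single edge labels --- occupy the low range, disjoint from both blocks. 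As in the proof of \Href{jellyfish}, the parity hypothesis of \Href{le par} will force a case split on the parities of $n'$ and $t$.

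I expect the decisive difficulty to be the rigidity of the cycle. In a jellyfish the many pendant edges at each hub give ample freedom to tune a vertex sum independently; in $C_{n'}$ every vertex has degree exactly two, consecutive edges are shared between neighbours, and so the cycle-vertex sums are strongly coupled. Arranging the three families of $\phi$-values to be simultaneously pairwise distinct, while also keeping two \emph{incident} large labels on the cycle and honoring the exact threshold $t+s-l$ --- which, at the bottom of the top sub-interval, equals $m'-1$ and thus forces the two reserved labels to be precisely $m'$ and $m'-1$ --- is the crux. I would attempt a cyclic labeling pattern whose adjacent pairs telescope to consecutive sums, a one-dimensional analogue of the alternating row/column trick proving \Href{le par}, and only afterwards slot the $3$-path labels into the remaining gaps. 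Establishing that such a pattern exists and produces disjoint sum-blocks for every residue class of $n'$ and $t$ is, I believe, where a complete proof would succeed or fail.
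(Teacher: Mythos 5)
The statement you set out to prove is, in the paper, an open conjecture: the authors establish $\tau(C_n)=\beta(C_n)$ only for $3\leq n\leq 9$ (\Href{cycles}), and they explicitly remark that confirming \Href{tau beta for Cn} for $n\geq 10$ would follow once \Href{labelings for cycles} is extended to all $n$. Your proposal reproduces precisely the paper's own reduction machinery --- the upper bound from \Href{cycle}, the small-$t$ regime from \Href{basecase of lemma 9}, and the propagation of ``seed'' labelings by subdivision via \Href{lemma am general} and \Href{two largest labels general} --- and that part of your argument is correct: your bookkeeping (the choice of $n'(t)$, the location of $t$ in the top sub-interval, $p=n-n'+1$, the separate $n=n'$ case) checks out and is exactly how the paper deduces \Href{cycles} from \Href{labelings for cycles}. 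But the reduction is not the content of the conjecture; it is the paper's own observation. Everything hinges on the seeds: for every $n'\geq 10$ and each of the at most three integers $t$ with $(1+\sqrt2)(n'-\frac12)\leq t\leq (1+\sqrt2)(n'+\frac12)$, one must exhibit an antimagic labeling of $C_{n'}\cup tP_3$ in which two incident cycle edges carry labels at least $t+s-l$. You do not construct these; you only propose to imitate the jellyfish argument, and you yourself flag the reason that imitation may fail.

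That reason is genuine, and it is worth making concrete. \Href{le par} is usable in the proof of \Href{jellyfish} because each cycle vertex of $J(C_3,r)$ has $r$ pendant edges whose labels form a free block that can be assigned wholesale to that vertex: the lemma distributes label-sets among hubs whose sums can be tuned independently. On a bare cycle there are no such free blocks --- every edge label is shared by two adjacent vertex sums --- so \Href{le par} has nothing to distribute, and no cyclic analogue of it is proved in the paper or obviously true. Moreover, in the jellyfish proof the hub sums are of order $r^2$, vastly exceeding $m'$, so the three families of $\phi$-values (hubs, path centers, path leaves) are trivially separated; on $C_{n'}\cup tP_3$ with $t\approx(1+\sqrt2)n'$ all $\phi$-values are $O(m')$ and, by \Href{max phi bound}, are squeezed into an interval barely larger than the number of vertices, so the generous separation argument collapses into an exact packing problem. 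This is precisely why the seeds for $n'\leq 9$ in \Href{am labeling} were found by computer and why the general statement is left in the paper as \Href{tau beta for Cn}. As it stands, your proposal is a correct restatement of the known reduction together with an unproven program for the base cases; it does not prove the conjecture, which remains open.
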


\noindent 
To confirm \Href{tau beta for Cn} for $n \geq 10$, it is sufficient to extend \Href{labelings for cycles} for all $n$.

We conclude this section with the following two results. 
\begin{theorem}
\label{parker} 
Let $G$ be a 2-regular graph. If $G \cup tP_3$ is antimagic, then $C_q \cup G \cup tP_3$ is also antimagic for any $q \geq 3$.
\end{theorem}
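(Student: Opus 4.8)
The plan is to re-use the given labeling verbatim and to spend the $q$ new, largest labels on the extra cycle. Write $m'=|E(G\cup tP_3)|=n+2t$, where $n=|V(G)|=|E(G)|$ since $G$ is 2-regular, and let $f$ be an antimagic labeling of $G\cup tP_3$, so that $f$ is a bijection onto $\{1,2,\ldots,m'\}$. The single fact I would extract from $f$ is a ceiling on its $\phi$-values: every vertex of $G\cup tP_3$ has degree at most $2$ (all vertices of the 2-regular graph $G$ have degree $2$, and each $P_3$ has two leaves and one degree-$2$ center), so for every such vertex $v$ we have $\phi_f(v)\le m'+(m'-1)=2m'-1$.

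Next I would build a labeling $g$ of $C_q\cup G\cup tP_3$, which has $m'+q$ edges, as follows. On the copy of $G\cup tP_3$ set $g=f$, keeping the labels $\{1,\ldots,m'\}$. On $C_q$ use the remaining labels $\{m'+1,m'+2,\ldots,m'+q\}$, assigned by taking any antimagic labeling of $C_q$ with label set $\{1,\ldots,q\}$ and adding $m'$ to each label. Such an antimagic labeling of $C_q$ exists for every $q\ge 3$ because cycles are regular and regular graphs are antimagic \cite{regular2, regular, regular bipartite}; adding the constant $m'$ to every edge label raises each vertex sum on $C_q$ by exactly $2m'$, so distinctness of these sums is preserved.

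It will then remain to check that $g$ is antimagic, i.e.\ that no two vertices of $C_q\cup G\cup tP_3$ share a $\phi$-value. Since $g$ agrees with $f$ on $G\cup tP_3$, the vertices inside $G\cup tP_3$ still have pairwise distinct $\phi$-values, each at most $2m'-1$ by the observation above. The vertices of $C_q$ have pairwise distinct $\phi$-values by construction, and each such sum is a sum of two distinct labels from $\{m'+1,\ldots,m'+q\}$, hence at least $(m'+1)+(m'+2)=2m'+3$. Therefore every $\phi$-value on $C_q$ strictly exceeds $2m'-1$ and cannot coincide with any $\phi$-value coming from $G\cup tP_3$. Hence all $\phi$-values are distinct and $g$ is antimagic.

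I do not expect a genuine obstacle here; the only thing to get right is the degree bound $\phi_f(v)\le 2m'-1$, which is exactly where the hypothesis that $G$ is 2-regular (so that $G\cup tP_3$ has maximum degree $2$) enters. If $G$ were permitted a vertex of degree $\ge 3$, its $\phi$-value could exceed $2m'+3$ and the clean separation between the cycle sums and the remaining sums would break down, so the argument really relies on the maximum degree being $2$ rather than on 2-regularity per se.
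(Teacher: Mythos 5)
Your proof is correct, but it takes a different route from the paper's for large $q$. Both arguments share the same core separation idea: keep $f$ on $G\cup tP_3$, observe that maximum degree $2$ forces $\phi_f(v)\leq 2m'-1$ there, and place the largest labels on the new cycle so that all of its vertex sums exceed $2m'-1$. Where you diverge is in how distinctness of the sums \emph{within} the new cycle is guaranteed for general $q$: you invoke the external theorem that cycles (being regular graphs) are antimagic \cite{regular2, regular, regular bipartite}, shift such a labeling of $C_q$ up by $m'$, and note that on a $2$-regular graph a constant shift of $m'$ raises every vertex sum by exactly $2m'$, preserving distinctness; the smallest resulting sum, $(m'+1)+(m'+2)=2m'+3$, then clears the $2m'-1$ ceiling. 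The paper instead treats only $q=3$ directly (where the three pairwise sums of the labels $m'+1,m'+2,m'+3$ are automatically distinct) and obtains $q\geq 4$ by induction, subdividing the edge carrying the top label via \Href{two largest labels general}. Your version is shorter and uniform in $q$, at the cost of importing the (nontrivial, though classical for cycles) antimagicity of $C_q$; the paper's version is self-contained within its own machinery and, as a byproduct, produces a labeling of $C_q\cup G\cup tP_3$ with the two largest labels on incident cycle edges at the vertex of maximum $\phi$-value, a structural feature it reuses in the same framework elsewhere (e.g., in the proof of \Href{double c3}). Your closing remark is also accurate: both proofs really use only that $G\cup tP_3$ has maximum degree $2$, not $2$-regularity itself.
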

\begin{proof}
Suppose $q=3$ and let $G$ be a 2-regular graph with $n$ vertices. Let $f$ be an antimagic labeling of $G \cup tP_3$.  
Since the degree of every vertex $v$ in $V(G \cup tP_3)$ is at most 2, and $m'=|E(G \cup tP_3)|=n+2t$,  we have $\phi(v) \leq 2n+4t-1$.
We extend $f$ to $C_3 \cup G \cup tP_3$ by assigning the largest three labels, $n+2t+i$, $1 \leq i \leq 3$, to $E(C_3)$ and keeping the same   labels for other edges. Then we have  $\phi(v) > 2n+4t$ for every $v \in V(C_3)$.  Therefore, $f$ is an antimagic labeling for $C_3 \cup G \cup tP_3$. As the largest two labels are assigned to incident edges on $C_3$, the result for $q \geq 4$  follows by  \Href{two largest labels general}. 
\end{proof}

\begin{theorem}
\label{double c3}
For any $n \geq 3$, $\tau(C_n \cup C_3) \geq 15$. Moreover, 
$\tau(2C_3)=\beta(2C_3)=15$. 
\end{theorem}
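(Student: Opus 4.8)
The plan is to compute $\beta(2C_3)$ directly, use \Href{cycle} for the upper bound, and then build all the required labelings on top of $2C_3$ via the two subdivision lemmas. Since $2C_3$ is $2$-regular with $n=m=k=6$ and $t'=0$, the second bound in \Href{general bound} equals $2(6)+5(6)+1=43$ while the first equals $(1+\sqrt2)(6+\tfrac12)\approx 15.69$; hence \Href{cycle} gives $\tau(2C_3)\leq\beta(2C_3)=15$. Everything else amounts to showing that $C_n\cup C_3\cup tP_3$ is antimagic for all $n\geq 3$ and all $t\leq 15$.

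The central observation is that $C_n\cup C_3$ is obtained from $2C_3$ by subdividing one edge of a single triangle into $n-2$ edges. So I would produce, for each $0\leq t\leq 15$, an antimagic labeling of $2C_3\cup tP_3$ tailored to feed \Href{two largest labels general} or \Href{lemma am general}, and then subdivide. The split on $t$ follows the Remark preceding \Href{labelings for cycles}: for $2C_3$ the feasibility condition for \Href{two largest labels general} (that $\max\phi=2m'-1$ be allowed by \Href{max phi bound}) reduces to $t^2-11t-32\leq 0$, i.e. $t\leq 13$. Thus for $0\leq t\leq 13$ I would arrange the two largest labels $m'$ and $m'-1$ (with $m'=6+2t$) on two incident edges of one triangle; since every vertex of $2C_3\cup tP_3$ has degree at most $2$, the condition $\phi_f(u)\leq 2m'-1$ is automatic, so \Href{two largest labels general} applies and subdividing that triangle's edge into $n-2$ edges yields $C_n\cup C_3\cup tP_3$ for every $n\geq 4$. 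For $t=14$ and $t=15$ a labeling of the first type cannot exist (it would force $\max\phi=2m'-1$, contradicting \Href{max phi bound}), so I would instead place on two incident triangle edges labels at least $t+s-l$, which equals $28$ when $t=14$ and $21$ when $t=15$ (both at most $m'$), and invoke \Href{lemma am general}. In each case the base labeling itself settles $n=3$.

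For the small range $0\leq t\leq 8$ the base labelings are essentially free: $C_3\cup tP_3$ is antimagic for $t\leq\tau(C_3)=8$, and the construction in \Href{parker} adjoins a second triangle carrying the three largest labels, which automatically places the two largest on incident edges and makes that vertex attain the maximum $\phi$-value, exactly the hypothesis of \Href{two largest labels general}. The remaining values $9\leq t\leq 15$ are where the real work lies: here I would exhibit the seven labelings of $2C_3\cup tP_3$ explicitly, in the format of \Href{c3 labeling}---a triple of edge labels for each triangle together with $t$ pairs of labels for the $3$-paths---and check directly that the two largest labels are incident on a triangle for $9\leq t\leq 13$, and that two incident triangle edges carry labels at least $t+s-l$ for $t\in\{14,15\}$.

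Assembling the pieces: the base labelings give $\tau(2C_3)\geq 15$, so with the upper bound $\tau(2C_3)=\beta(2C_3)=15$; and for $n\geq 4$ the subdivision step (via \Href{two largest labels general} for $t\leq 13$ and \Href{lemma am general} for $t\in\{14,15\}$) shows $C_n\cup C_3\cup tP_3$ antimagic for all $t\leq 15$, whence $\tau(C_n\cup C_3)\geq 15$. I expect the main obstacle to be constructing the seven explicit labelings for $9\leq t\leq 15$ so that they are simultaneously antimagic and meet the precise incidence and magnitude constraints demanded by the subdivision lemmas---a finite but delicate search, very much like the computer-assisted table behind \Href{labelings for cycles}.
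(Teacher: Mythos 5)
Your proposal follows essentially the same route as the paper: the upper bound via \Href{cycle}, the range $0 \leq t \leq 8$ handled by \Href{parker} together with $\tau(C_3)=8$, and the range $9 \leq t \leq 15$ handled by explicit labelings of $2C_3 \cup tP_3$ that feed \Href{two largest labels general} for $t \leq 13$ and \Href{lemma am general} for $t \in \{14,15\}$ (your threshold computation $t^2-11t-32\leq 0$ and the values $t+s-l = 28, 21$ match the paper's Table~\ref{tab:doube c3 B} exactly). The only piece you defer---the seven explicit labelings for $9 \leq t \leq 15$---is precisely what the paper supplies by computer search in its appendix table, so your plan is correct and complete in structure.
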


\begin{proof} 
Combining \Href{parker} with the antimagic labelings for $C_3 \cup tP_3$, $0 \leq t \leq 8$, given in \Href{c3 labeling}, we obtain   $\tau(C_n \cup C_3) \geq 8$ for $n \geq 3$. In addition, \Href{tab:doube c3 B} gives antimagic labelings for $2C_3 \cup tP_3$ for $9 \leq t \leq 15$, where  the labelings 
satisfy  the following properties:
\begin{itemize}
    \item For each $9 \leq t \leq 13$, the labeling assigns the two largest labels, $m'$ and $m'-1$, to incident edges on $C_3$, satisfying the hypotheses of \Href{two largest labels general}.
    \item For each $t=14,15$, the labeling assigns two labels  greater than or equal to the value of $t+s-l$ to incident edges  on $C_3$,  satisfying the hypotheses of \Href{lemma am general}.
\end{itemize}

\noindent
Thus, $C_n \cup C_3 \cup tP_3$ is antimagic for $n \geq 3$ and $9 \leq t \leq 15$, implying $\tau(C_n \cup C_3) \geq 15$ for $n \geq 3$. 
The ``moreover'' part follows by \Href{cycle}, which shows $\tau(2C_3)  \leq 15$. 
\end{proof}

See \Href{fig:ex 2c3+13p3} for an example.

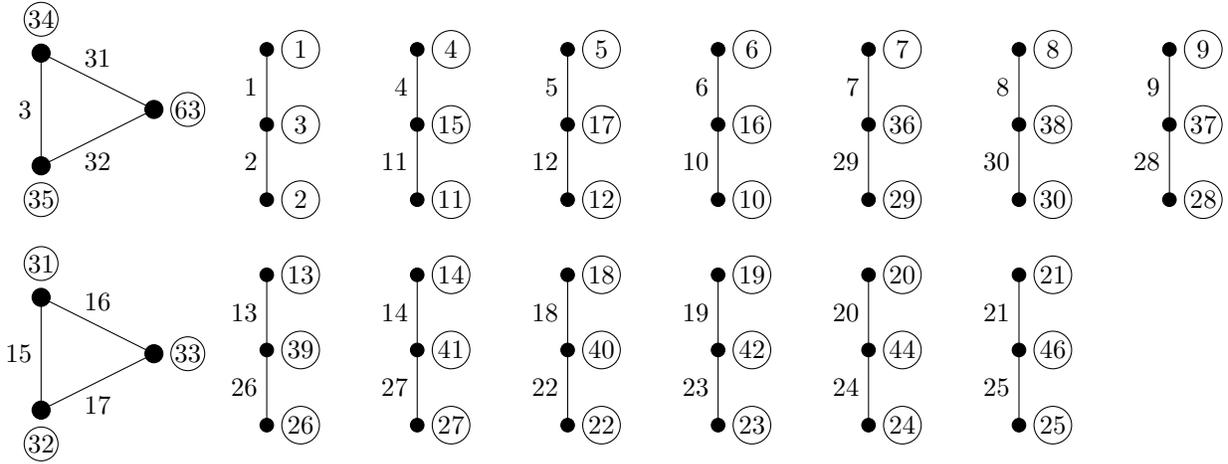
\begin{figure}[h]
    \centering
    \begin{tikzpicture}
        \begin{scope}[shift={(-3,-3.8)}, scale=1.5]
            \newcommand{\ls}{0.15} 
            \newcommand{\s}{0.08}
            \newcommand{\ox}{0.3} 
        
            \draw (1,0.5) -- (0,1) -- (0,0) -- (1,0.5);

            \draw[fill=black] (0, 0) circle (\s);
	        \draw[fill=black] (1, 0.5) circle (\s);
	        \draw[fill=black] (0, 1) circle (\s);
	        
	        \footnotesize
	        \draw (0, 0 - \ox) circle (\ls) node {32};
            \draw (1 + \ox, 0.5) circle (\ls) node {33};
            \draw (0, 1 + \ox) circle (\ls) node {31};
	   	    \node[left]  at (0, 0.5) {15};
            \node[above] at (0.5, 0.8) {16};
            \node[below] at (0.5, 0.2) {17};
        \end{scope}
        \begin{scope}[shift={(-3,-0.55)}, scale=1.5]
            \newcommand{\ls}{0.15} 
            \newcommand{\s}{0.08}
            \newcommand{\ox}{0.3} 
        
            \draw (1,0.5) -- (0,1) -- (0,0) -- (1,0.5);

            \draw[fill=black] (0, 0) circle (\s);
	        \draw[fill=black] (1, 0.5) circle (\s);
	        \draw[fill=black] (0, 1) circle (\s);
	        
	        \footnotesize
	        \draw (0, 0 - \ox) circle (\ls) node {35};
            \draw (1 + \ox, 0.5) circle (\ls) node {63};
            \draw (0, 1 + \ox) circle (\ls) node {34};
	    
	   	    \node[left]  at (0, 0.5) {3};
            \node[above] at (0.5, 0.8) {31};
            \node[below] at (0.5, 0.2) {32};
        \end{scope}
        
        \newcommand{\s}{0.09} 
        \newcommand{\ls}{0.25} 
        \newcommand{\ox}{0.45} 
        \setcounter{o}{0}
        \setcounter{oy}{0}
        \foreach \u\l\p in {1/2/3, 4/11/15, 5/12/17, 6/10/16, 7/29/36, 8/30/38, 9/28/37, 13/26/39, 14/27/41, 18/22/40, 19/23/42, 20/24/44, 21/25/46}{
	        \draw(\theo, -1 + \theoy) -- (\theo, 1 + \theoy);
	        \draw[fill=black] (\theo, 1 + \theoy) circle (\s);
	        \draw[fill=black] (\theo, 0 + \theoy) circle (\s);
	        \draw[fill=black] (\theo,-1 + \theoy) circle (\s);
	        
	        \footnotesize
	        \draw (\theo + \ox, 1 + \theoy) circle (\ls) node {\u};
	        \draw (\theo + \ox, 0 + \theoy) circle (\ls) node {\p};
	        \draw (\theo + \ox, -1 + \theoy) circle (\ls) node {\l};
	        \node[left] at (\theo, 0.5 + \theoy) {\u};
	        \node[left] at (\theo,-0.5 + \theoy) {\l};
	        
	        \addtocounter{o}{2}
	        \ifnum\theo=14
	            \addtocounter{oy}{-3}
	            \setcounter{o}{0}
	        \fi
	    }
	\end{tikzpicture}
    \caption{An antimagic labeling of $2C_3 \cup 13P_3$.} 
    \label{fig:ex 2c3+13p3}
\end{figure}
%
\section{Open Problems and Future Work} \label{problems}
%

To study the general properties of $\tau(G)$, the triangle inequality emerges: 

\begin{conjecture}
\label{triangle inequality}
For any antimagic graphs $G$ and $H$, it holds  that $\tau(G \cup H) \leq \tau(G) + \tau(H)$.
\end{conjecture}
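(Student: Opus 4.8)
The plan is to unwind the definition of $\tau$ and reduce \Href{triangle inequality} to a single non-antimagic instance. Since $\tau(G\cup H)$ is the largest $t$ for which $(G\cup H)\cup t'P_3$ is antimagic for \emph{every} $t'\le t$, it suffices to show that $(G\cup H)\cup\bigl(\tau(G)+\tau(H)+1\bigr)P_3$ is not antimagic: if this one graph fails, the defining downward-closed condition already breaks at $t=\tau(G)+\tau(H)+1$, giving $\tau(G\cup H)\le\tau(G)+\tau(H)$.

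Assume for contradiction that $f$ is an antimagic labeling of $(G\cup H)\cup tP_3$ with $t=\tau(G)+\tau(H)+1$. The natural attack is a \emph{splitting} argument: partition the $t$ three-paths into families of sizes $t_1$ and $t_2$ with $t_1+t_2=t$, attach them to $G$ and $H$ respectively, and extract from $f$ antimagic labelings of $G\cup t_1P_3$ and $H\cup t_2P_3$; since $t_1+t_2>\tau(G)+\tau(H)$, one part would carry more three-paths than its threshold allows. Two difficulties must be overcome. First is a \emph{label-range} problem: the restriction of $f$ to $G$ together with $t_1$ three-paths uses some $(m_G+2t_1)$-element subset of $\{1,\dots,M\}$ (with $M=m_G+m_H+2t$), not the interval $\{1,\dots,m_G+2t_1\}$ demanded by the definition, and order-preserving compression of the labels to that interval need not preserve distinctness of the $\phi$-values. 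Second, even granting the extraction, antimagicness of $G\cup t_1P_3$ for a \emph{single} value $t_1>\tau(G)$ does not by itself contradict the definition of $\tau(G)$, since antimagicness of $X\cup t'P_3$ is not known to be monotone downward in $t'$. The cleanest way around the second point is to force $t_1=\tau(G)+1$ and $t_2=\tau(H)$ exactly, so that the extracted labeling contradicts the guaranteed first failure $G\cup(\tau(G)+1)P_3$; but steering the split this tightly while respecting $f$ is itself the crux.

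It is tempting to hope that the counting inequality of \Href{main lemma} already forces subadditivity, because $s$ and $l$ depend only on $(m,n,t)$ and are additive in $(m,n)$. This route cannot succeed, and seeing why pinpoints the genuine obstacle. The two branches of the bound satisfy $\beta_1(G\cup H)=\beta_1(G)+\beta_1(H)-\tfrac{1+\sqrt2}{2}$ and $\beta_2(G\cup H)=\beta_2(G)+\beta_2(H)-1$, so each branch is separately subadditive. However $\beta=\lfloor\min\{\beta_1,\beta_2\}\rfloor$ takes the minimum of the two branches with the \emph{same} index on both summands, whereas $\beta(G)+\beta(H)$ minimizes the branches independently on each summand. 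When the active branch for $G$ differs from that for $H$ (say $G$ a cycle, governed by the first bound, and $H$ a star forest, governed by the second), $\beta(G\cup H)$ can \emph{exceed} $\beta(G)+\beta(H)$. Thus $\beta$ is not subadditive, and any proof of \Href{triangle inequality} must exploit the gap $\tau<\beta$ precisely in these ``crossing'' configurations — it must show that the two extremal behaviors cannot be realized simultaneously in one labeling of the union. This is the main obstacle, and it explains why the statement is only a conjecture rather than a corollary of the present machinery.

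Given these obstructions, I would first try to establish the inequality in structured cases where $\tau=\beta$ is already known for all three graphs and the active branches coincide, so that subadditivity of the relevant branch does the work: for instance unions of cycles, or unions of star forests. In parallel, I would attempt a direct extremal argument modeled on the non-antimagicness of $2P_3$ in \Href{FigureSwapping}: instead of extracting sub-labelings, argue by a global pigeonhole on the $\phi$-values of the three-path centers that an antimagic labeling of $(G\cup H)\cup\bigl(\tau(G)+\tau(H)+1\bigr)P_3$ would force more ``large'' vertex sums than the supply of large labels permits, and account the deficit separately against $G$ and against $H$. Making that accounting additive — so that the surplus demand of the union splits as demand against $G$ plus demand against $H$ — is, I expect, the decisive step, and it is essentially equivalent to resolving the label-range difficulty identified above.
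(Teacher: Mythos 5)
You were asked to prove a statement that the paper itself poses only as an open conjecture: there is no ``paper's own proof'' of \Href{triangle inequality}, merely a partial confirmation under extra hypotheses, and your proposal---which explicitly declines to claim a proof---is correct in that assessment. The comparison is therefore between partial routes, and there the two essentially coincide. The paper's partial result says $\tau(G\cup H)\le\tau(G)+\tau(H)$ whenever $\Delta^1_G+\Delta^1_H\le\frac{1+\sqrt{2}}{2}$ (or $\Delta^2_G+\Delta^2_H\le 1$), where $\Delta^i_G$ denotes the gap between $\tau(G)$ and the $i$-th bound of \Href{general bound}; its proof is exactly the pair of branch-wise subadditivity identities you derive, namely $b^1_{G\cup H}=b^1_G+b^1_H-\frac{1+\sqrt{2}}{2}$ and $b^2_{G\cup H}=b^2_G+b^2_H-1$. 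Your ``structured cases'' plan (both summands tight, $\tau=\beta$, and governed by the same branch) is this same argument in different clothing, so on the tractable side you and the paper take the same route. Two things in your write-up go beyond what the paper makes explicit, and both are correct. First, the observation that $\beta=\big\lfloor\min\{b^1,b^2\}\big\rfloor$ is \emph{not} subadditive when the active branches differ: for instance, with $G=C_3$ and $H$ a single star with $20$ edges, one has $\beta(G)=8$ and $\beta(H)=41$ (both equal to $\tau$, by \Href{cycles} and the star-forest result of \cite{star forest}), yet $\beta(G\cup H)=\lfloor 50.9\rfloor=50>49=\beta(G)+\beta(H)$; so the conjecture genuinely cannot be a corollary of \Href{general bound} together with the known tightness theorems, and the ``crossing'' configurations are where it lives. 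Second, your reduction to the single instance $t=\tau(G)+\tau(H)+1$ is valid as stated---since $\tau$ is defined by ``antimagic for all $t'\le t$,'' a single failure caps $\tau$---and you are right that it sidesteps any need for downward monotonicity of antimagicness, which is itself the separate open \Href{tau is Bernoulli}; your further remark that only the split $t_1=\tau(G)+1$, $t_2=\tau(H)$ would yield a usable contradiction is also accurate. In short: there is no gap relative to the paper, because the paper proves no more than you do; your analysis reproduces its partial confirmation and adds a correct, concrete explanation of why the mixed-branch case is the true obstruction.
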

\noindent
We partially confirm  \Href{triangle inequality} with additional conditions. 
Let $G$ be a graph with $m_G$ edges and $n_G$ vertices. Denote the first bound in \Href{general bound} by $b^1_G = (3 + 2\sqrt{2})(m_G - n_G) + (1 + \sqrt{2})(m_G + 1/2)$, and define  
$\Delta^1_{G} = b^1_{G} - \tau(G)$.  
If $G$ and $H$ are graphs with $\Delta^1_G + \Delta^1_H \leq \frac{1+\sqrt{2}}{2}$, then 
$\tau(G \cup H) \leq \tau(G) + \tau(H)$. By    \Href{general bound},  
$$
\begin{array}{llll}
\tau(G \cup H) &\leq& b^1_{G \cup H}  \\ 
&=& b^1_G + b^1_H - \frac{1+\sqrt{2}}{2} \ \  \  \ (\mbox{$\because$ $m_{G \cup H} = m_G + m_H$ and $n_{G \cup H} = n_G + n_H$}) \\
&=& \tau(G) + \Delta^1_G + \tau(H) + \Delta^1_H - \frac{1+\sqrt{2}}{2} \\ 
&\leq& \tau(G) + \tau(H).  \ \ \ \mbox{($\because$ \ $\Delta^1_G + \Delta^1_H \leq \frac{1+\sqrt{2}}{2}$})  
\end{array} 
$$
A similar statement can be made using the second bound of \Href{general bound}. Let $G$ be a graph with $n_G$ vertices, $m_G$ edges, $k_G$ internal edges, and $t'_G$ components isomorphic to $P_3$. Denote the second bound of \Href{general bound} by $b^2_G = 2m_G + 5(k_G-t'_G) + 1$, and denote  
$\Delta^2_{G} = b^2_{G} - \tau(G)$. 
If $G$ and $H$ are graphs with $\Delta^2_G + \Delta^2_H \leq 1$, then 
$\tau(G \cup H) \leq \tau(G) + \tau(H)$. 

For a graph $G$, we have established in \Href{general bound} upper bounds $\beta(G)$ for  $\tau(G)$, and  we have shown that many graphs have $\tau(G)=\beta(G)$.  On the other hand, if there exists a graph $G$ satisfying the hypotheses in \Href{general bound} but has $\tau(G) < \beta(G)$,  then for any positive integer $t$, where $\tau(G) < t \leq \beta(G)$, it remains to determine whether $G \cup tP_3$ is antimagic or not. We conjecture that $G \cup tP_3$ is not antimagic for those values of $t$ if they exist:   

\begin{conjecture}
\label{tau is Bernoulli}
For a graph $G$, $G \cup tP_3$ is antimagic if and only if $t \leq \tau(G)$. That is, $\tau(G)$ is the maximum integer $t$ such that $G \cup tP_3$ is antimagic.
\end{conjecture}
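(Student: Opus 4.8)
The plan is to reduce the biconditional to a single monotonicity step. The forward implication (if $t\leq\tau(G)$ then $G\cup tP_3$ is antimagic) is immediate from the definition of $\tau(G)$, so the whole content lies in the converse. I claim the converse is equivalent to the assertion that the set $T(G)=\{t\geq 0: G\cup tP_3\text{ is antimagic}\}$ is downward closed, i.e.\ an initial segment of $\mathbb{Z}_{\geq 0}$: if $T(G)$ is an initial segment, then its maximum is exactly $\tau(G)$, and antimagicity at $t$ forces $t\leq\tau(G)$. Downward closedness in turn follows by induction from a single-step \emph{removal lemma}: for every $t\geq 1$, if $G\cup tP_3$ is antimagic then so is $G\cup(t-1)P_3$. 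Moreover, by \Href{general bound} every $t>\beta(G)$ already lies outside $T(G)$, and by \Href{coro main} the conjecture is known whenever $\tau(G)=\beta(G)$; hence it suffices to prove the removal lemma for $t$ in the window $\tau(G)<t\leq\beta(G)$, that is, to rule out a ``revival'' of antimagicity above $\tau(G)$.

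To prove the removal lemma, I would start from an antimagic labeling $f$ of $G\cup tP_3$, which uses the labels $\{1,\dots,m'\}$ with $m'=m+2t$, and try to manufacture an antimagic labeling of $G\cup(t-1)P_3$, which must use $\{1,\dots,m'-2\}$. The clean case is when some $P_3$ component carries the two largest labels $m'-1$ and $m'$ on its two edges: deleting that whole component leaves $\{1,\dots,m'-2\}$ intact on the remaining graph, and deleting a component can never merge two distinct $\phi$-values, so the restriction is immediately antimagic. In general one removes an arbitrary $P_3$ carrying labels $a<b$ and applies the unique order-preserving bijection $\sigma$ from $\{1,\dots,m'\}\setminus\{a,b\}$ onto $\{1,\dots,m'-2\}$. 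Under $\sigma$ each label drops by $0$, $1$, or $2$ according to its position relative to $a$ and $b$, so the value of a vertex $v$ decreases by the number of its incident labels exceeding $a$ plus the number exceeding $b$; the leaves (degree $1$) keep distinct values automatically, since $\sigma$ is injective and a leaf's $\phi$ equals its own label.

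The main obstacle is precisely that this order-preserving relabeling can collapse two $\phi$-values at non-leaf vertices whose incident labels straddle $a$ and $b$ differently, and since $G$ is arbitrary these vertices may have large degree, so the non-uniform shift is hard to control. I see two avenues. The first is a \emph{consolidation-swap lemma}: show that any antimagic labeling of $G\cup tP_3$ can be transformed, by a sequence of antimagicity-preserving label swaps, into one in which $m'-1$ and $m'$ sit on a common $P_3$, reducing everything to the clean case; here the degrees of freedom supplied by the path leaves---one may freely permute the two labels of any $P_3$, and leaf $\phi$-values are decoupled from the rest of the graph---should be the main engine. The second is a direct collision-avoidance argument that tracks, via the identity $s\geq l$ of \Href{main lemma} and the maximum-$\phi$ bound of \Href{max phi bound}, exactly which non-leaf vertices can be shifted into coincidence, and repairs them locally by re-permuting leaf labels.

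I expect the consolidation-swap lemma to be the genuine difficulty. There is no evident local move that always frees the top two labels onto a single component while preserving every $\phi$-inequality, and it is exactly this absence of a robust local reduction---rather than any counting deficiency, since $s(G,t-1)\geq l(G,t-1)$ holds throughout the window---that keeps \Href{tau is Bernoulli} open. As a consistency check, the conjecture already holds for the families treated above (star forests, balanced double stars, $J(C_3,r)$, $C_n$ for $3\leq n\leq 9$, and $2C_3$) because there $\tau(G)=\beta(G)$; these give no counterexample, but they also give no leverage on the window $\tau(G)<t\leq\beta(G)$, which is where the real work remains.
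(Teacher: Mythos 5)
This statement is an open conjecture in the paper (\Href{tau is Bernoulli}); the paper offers no proof of it, only the observation that it is equivalent to the single-step removal statement ``if $G\cup tP_3$ is antimagic for some $t\geq 1$, then $G\cup(t-1)P_3$ is antimagic.'' Your reduction --- downward closedness of $T(G)$ via a removal lemma, restricted by \Href{general bound} and \Href{coro main} to the window $\tau(G)<t\leq\beta(G)$ --- reproduces exactly that reformulation and is logically sound as far as it goes. But it goes no further: both of your proposed engines, the consolidation-swap lemma and the collision-avoidance repair, are left unproven, as you concede, so the proposal contains no proof of the conjecture, only a restatement of it plus a program. (One smaller omission: under the order-preserving relabeling $\sigma$, collisions can also arise between a leaf and a non-leaf vertex, not only among non-leaves, since a leaf's value drops by $0$, $1$, or $2$ as well; your collision analysis tracks only the non-leaf vertices.)

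Moreover, the consolidation-swap lemma is provably \emph{false} in part of the window, so the first avenue cannot be completed as stated. If $m'$ and $m'-1$ sit on a single $P_3$, its center $w$ has $\phi(w)=2m'-1$, and \Href{max phi bound} forces $2m'-1\leq n+3t+s-l$; for a $2$-regular graph ($m=n$, $m'=n+2t$) this is the inequality $s-l+1\geq n+t$ of the paper's remark in \Href{2regsection}, which fails once $t$ exceeds roughly $(1+\sqrt{2})(n-\tfrac12)$, while $\beta(G)\approx(1+\sqrt{2})(n+\tfrac12)$. Concretely, $C_5\cup 13P_3$ is antimagic (the paper's appendix exhibits a labeling), yet here $s-l=2$ and $n+3t+s-l=46<61=2m'-1$, so \emph{no} antimagic labeling of $C_5\cup 13P_3$ places the labels $30,31$ on one $P_3$, and hence no sequence of antimagicity-preserving swaps can ever reach your ``clean case'' there. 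Any proof of the removal lemma must therefore handle top labels that are split across components --- precisely the uncontrolled-collision regime you flag --- and that step, the entire substance of \Href{tau is Bernoulli}, remains open in both your proposal and the paper.
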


\noindent
\Href{tau is Bernoulli} is equivalent to the following statement:  
If $G \cup tP_3$ is antimagic for some positive integer $t$,  then $G \cup (t-1)P_3$ is antimagic.

Thus far, we have not found a graph with $\tau(G) \neq \beta(G)$. Therefore, we ask if our bound is always tight:

\begin{question}\label{bigquestion}
Does there exist a graph $G$ without isolated vertices nor $P_2$ as components with  $\beta(G)\geq 0$ and
$\tau(G) < \beta(G)$? 
\end{question}
\noindent
With computer aid, we checked various small graphs, and found the following graphs have  $\tau(G)= \beta(G)$: the clique $K_4$, the graph induced by deleting an edge from $K_4$, and $C_3$ with at most one pendant edge extended from each of three the vertices on $C_3$.

Because $\beta(G)\geq 0$ when every component of $G$ has at least 3 edges, a weaker question arises:

\begin{question}
\label{masonsquestion}
Are all graphs without isolated vertices nor $P_2$ nor $P_3$ as components antimagic?
\end{question}

\noindent
An affirmative answer to \Href{masonsquestion} or a negative answer to \Href{bigquestion} would imply that \Href{conj} is true.

\bigskip

\noindent
{\bf Acknowledgement.} The authors would like to thank the three anonymous  referees for their careful reading of the manuscript and for their  insightful suggestions.



\newpage

\noindent
{\Large\bf Appendix}
{\fontsize{10pt}{9pt}\selectfont 
\begin{longtable}{|p{0.2cm}|p{0.2cm}|c|c|p{2.7cm}|p{10cm}| }\hline
\multirow{2}{*}{$n$} & \multirow{2}{*}{$t$} & \multirow{2}{*}{$m'$} &\multirow{2}{*}{$t+s-l$}&$f_t(e_1, e_2, \cdots, e_n)$ $\{\phi(V(C_n))\}$&  \multicolumn{1}{c|}{\multirow{2}{*}{Pairs of labels on $P_3$ and their sums}} \\ \hline
\multirow{4}{*}{3} & \multirow{2}{*}{7} & \multirow{2}{*}{17} & \multirow{2}{*}{13} & (6, \underline{13}, \underline{17}), \{19, 23, 30\} & (2, 4 $|$ 6) (1, 12 $|$ 13) (7, 10 $|$ 17) (3, 15 $|$ 18) (9, 11 $|$ 20) (5, 16 $|$ 21) (8, 14 $|$ 22) \\ \cline{2-6}
 & \multirow{2}{*}{8} & \multirow{2}{*}{19} & \multirow{2}{*}{10} & (11, \underline{14}, \underline{15}), \{25, 26, 29\} & (1, 10 $|$ 11) (2, 12 $|$ 14) (6, 9 $|$ 15) (7, 13 $|$ 20) (3, 18 $|$ 21) (5, 17 $|$ 22) (4, 19 $|$ 23) (8, 16 $|$ 24) \\ \hline
\multirow{4}{*}{4} & \multirow{2}{*}{9} & \multirow{2}{*}{22} & \multirow{2}{*}{19} & (12, 7, \underline{19}, \underline{20}), \{19, 26, 32, 39\} & (3, 4 $|$ 7) (2, 10 $|$ 12) (5, 15 $|$ 20) (1, 22 $|$ 23) (6, 18 $|$ 24) (9, 16 $|$ 25) (13, 14 $|$ 27) (11, 17 $|$ 28) (8, 21 $|$ 29) \\ \cline{2-6}
& \multirow{2}{*}{10} & \multirow{2}{*}{24} & \multirow{2}{*}{15} & (8, 24, \underline{15}, \underline{16}), \{24, 31, 32, 39\} & (2, 6 $|$ 8) (1, 14 $|$ 15) (3, 13 $|$ 16) (4, 21 $|$ 25) (7, 19 $|$ 26) (5, 22 $|$ 27) (11, 17 $|$ 28) (9, 20 $|$ 29) (12, 18 $|$ 30) (10, 23 $|$ 33)  \\ \hline
\multirow{9}{*}{5} & \multirow{3}{*}{11} & \multirow{3}{*}{27} & \multirow{3}{*}{26} & (3, 23, 11, \underline{26}, \underline{27}), \{26, 30, 34, 37, 53\} & (1, 2 $|$ 3) (5, 6 $|$ 11) (10, 13 $|$ 23) (9, 18 $|$ 27) (8, 20 $|$ 28) (4, 25 $|$ 29) (7, 24 $|$ 31) (15, 17 $|$ 32) (12, 21 $|$ 33) (16, 19 $|$ 35) (14, 22 $|$ 36) \\ \cline{2-6}
 & \multirow{3}{*}{12}  & \multirow{3}{*}{29} & \multirow{3}{*}{21} & (7, 29, 13, \underline{21}, \underline{23}), \{30, 34, 36, 42, 44\} & (2, 5 $|$ 7) (4, 9 $|$ 13) (1, 20 $|$ 21) (6, 17 $|$ 23) (3, 26 $|$ 29) (15, 16 $|$ 31) (8, 24 $|$ 32) (11, 22 $|$ 33) (10, 25 $|$ 35) (18, 19 $|$ 37) (12, 28 $|$ 40) (14, 27 $|$ 41) \\ \cline{2-6}
& \multirow{3}{*}{13} & \multirow{3}{*}{31} & \multirow{3}{*}{15} & (9, 5, 14, \underline{19}, \underline{23}), \{14, 19, 32, 33, 42\} & (1, 4 $|$ 5) (2, 7 $|$ 9) (3, 20 $|$ 23) (6, 28 $|$ 34) (8, 27 $|$ 35) (10, 26 $|$ 36) (12, 25 $|$ 37) (16, 22 $|$ 38) (18, 21 $|$ 39) (11, 29 $|$ 40) (17, 24 $|$ 41) (13, 30 $|$ 43) (15, 31 $|$ 46) \\ \hline
\multirow{6}{*}{6} & \multirow{3}{*}{14} & \multirow{3}{*}{34} & \multirow{3}{*}{28} & (12, 32, 8, 20, \underline{28}, \underline{30}), \{28, 40, 42, 44, 48, 58\} & (1, 7 $|$ 8) (3, 9 $|$ 12) (2, 18 $|$ 20) (4, 26 $|$ 30) (5, 27 $|$ 32) (6, 29 $|$ 35) (13, 23 $|$ 36) (15, 22 $|$ 37) (17, 21 $|$ 38) (14, 25 $|$ 39) (10, 31 $|$ 41) (19, 24 $|$ 43) (11, 34 $|$ 45) (16, 33 $|$ 49) \\ \cline{2-6}
 & \multirow{3}{*}{15} & \multirow{3}{*}{36} & \multirow{3}{*}{21} & (9, 29, 8, 13, \underline{21}, \underline{34}), \{21, 34, 37, 38, 43, 55\} & (1, 7 $|$ 8) (3, 6 $|$ 9) (2, 11 $|$ 13) (4, 25 $|$ 29) (12, 27 $|$ 39) (5, 35 $|$ 40) (10, 31 $|$ 41) (18, 24 $|$ 42) (14, 30 $|$ 44) (22, 23 $|$ 45) (20, 26 $|$ 46) (19, 28 $|$ 47) (15, 33 $|$ 48) (17, 32 $|$ 49) (16, 36 $|$ 52) \\ \hline
\multirow{11}{*}{7} & \multirow{3}{*}{16} & \multirow{3}{*}{39} & \multirow{3}{*}{36} & (6, 7, 13, 24, 20, \underline{36}, \underline{37}), $\{13, 20,$ $37, 43, 44, 56, 73\}$ & (1, 5 $|$ 6) (3, 4 $|$ 7) (2, 22 $|$ 24) (8, 28 $|$ 36) (15, 25 $|$ 40) (14, 27 $|$ 41) (9, 33 $|$ 42) (19, 26 $|$ 45) (12, 34 $|$ 46) (18, 29 $|$ 47) (10, 38 $|$ 48) (17, 32 $|$ 49) (11, 39 $|$ 50) (16, 35 $|$ 51) (21, 31 $|$ 52) (23, 30 $|$ 53)  \\ \cline{2-6}
& \multirow{4}{*}{17} & \multirow{4}{*}{41} & \multirow{4}{*}{28} & (6, 7, 13, 16, 20, \underline{29}, \underline{36}), \{13, 20, 29, 36, 42, 49, 65\} & (1, 5 $|$ 6) (3, 4 $|$ 7) (2, 14 $|$ 16) (8, 35 $|$ 43) (19, 25 $|$ 44) (11, 34 $|$ 45) (9, 37 $|$ 46) (17, 30 $|$ 47) (10, 38 $|$ 48) (22, 28 $|$ 50) (24, 27 $|$ 51) (21, 31 $|$ 52) (12, 41 $|$ 53) (15, 39 $|$ 54) (23, 32 $|$ 55) (18, 40 $|$ 58) (26, 33 $|$ 59)  \\ \cline{2-6}
& \multirow{4}{*}{18} & \multirow{4}{*}{43} & \multirow{4}{*}{19} & (13, 39, 6, 7, 12, \underline{19}, \underline{31}), \{13, 19, 31, 44, 45, 50, 52\} & (1, 5 $|$ 6) (3, 4 $|$ 7) (2, 10 $|$ 12) (9, 30 $|$ 39) (8, 38 $|$ 46) (20, 27 $|$ 47) (11, 37 $|$ 48) (15, 34 $|$ 49) (23, 28 $|$ 51) (18, 35 $|$ 53) (25, 29 $|$ 54) (22, 33 $|$ 55) (14, 42 $|$ 56) (17, 40 $|$ 57) (26, 32 $|$ 58) (16, 43 $|$ 59) (24, 36 $|$ 60) (21, 41 $|$ 62)  \\ \hline
\multirow{8}{*}{8} & \multirow{4}{*}{19} & \multirow{4}{*}{46} & \multirow{4}{*}{36} & (15, 16, 17, 19, 31, 33, \underline{36}, \underline{37}), \{31, 33, 36, 50, 52, 64, 69, 73\} & (1, 14 $|$ 15) (3, 13 $|$ 16) (5, 12 $|$ 17) (8, 11 $|$ 19) (2, 35 $|$ 37) (4, 43 $|$ 47) (10, 38 $|$ 48) (7, 42 $|$ 49) (6, 45 $|$ 51) (26, 27 $|$ 53) (25, 29 $|$ 54) (9, 46 $|$ 55) (24, 32 $|$ 56) (23, 34 $|$ 57) (28, 30 $|$ 58) (18, 41 $|$ 59) (20, 40 $|$ 60) (22, 39 $|$ 61) (21, 44 $|$ 65)  \\ \cline{2-6}
& \multirow{4}{*}{20} & \multirow{4}{*}{48} & \multirow{4}{*}{26} & (15, 16, 17, 18, 31, 23, \underline{33}, \underline{35}), \{31, 33, 35, 49, 50, 54, 56, 68\} & (2, 13 $|$ 15) (4, 12 $|$ 16) (3, 14 $|$ 17) (8, 10 $|$ 18) (1, 22 $|$ 23) (5, 46 $|$ 51) (7, 45 $|$ 52) (6, 47 $|$ 53) (11, 44 $|$ 55) (9, 48 $|$ 57) (20, 38 $|$ 58) (27, 32 $|$ 59) (26, 34 $|$ 60) (25, 36 $|$ 61) (19, 43 $|$ 62) (21, 42 $|$ 63) (24, 40 $|$ 64) (29, 37 $|$ 66) (28, 39 $|$ 67) (30, 41 $|$ 71)  \\ \hline
\multirow{9}{*}{9} & \multirow{4}{*}{21} & \multirow{4}{*}{51} & \multirow{4}{*}{45} & (18, 3, 21, 24, 22, 30, 23, \underline{45}, \underline{46}), \{21, 24, 45, 46, 52, 53, 64, 68, 91\} & (1, 2 $|$ 3) (6, 12 $|$ 18) (5, 17 $|$ 22) (7, 16 $|$ 23) (4, 26 $|$ 30) (10, 44 $|$ 54) (8, 47 $|$ 55) (14, 42 $|$ 56) (9, 48 $|$ 57) (15, 43 $|$ 58) (19, 40 $|$ 59) (11, 49 $|$ 60) (20, 41 $|$ 61) (27, 35 $|$ 62) (13, 50 $|$ 63) (28, 37 $|$ 65) (32, 34 $|$ 66) (29, 38 $|$ 67) (33, 36 $|$ 69) (31, 39 $|$ 70) (25, 51 $|$ 76) \\ \cline{2-6}
& \multirow{5}{*}{22} & \multirow{5}{*}{53} & \multirow{5}{*}{34} & (19, 16, 3, 13, 18, 31, 34, \underline{35}, \underline{49}), \{16, 19, 31, 35, 49, 65, 68, 69, 84\} & (1, 2 $|$ 3) (5, 8 $|$ 13) (6, 12 $|$ 18) (4, 30 $|$ 34) (7, 47 $|$ 54) (9, 46 $|$ 55) (11, 45 $|$ 56) (14, 43 $|$ 57) (10, 48 $|$ 58) (15, 44 $|$ 59) (20, 40 $|$ 60) (24, 37 $|$ 61) (29, 33 $|$ 62) (27, 36 $|$ 63) (26, 38 $|$ 64) (25, 41 $|$ 66) (17, 50 $|$ 67) (28, 42 $|$ 70) (32, 39 $|$ 71) (21, 51 $|$ 72) (22, 52 $|$ 74) (23, 53 $|$ 76) \\ \hline
\caption{\fontsize{12pt}{12pt}\selectfont Antimagic labelings for  \Href{labelings for cycles}.}
\label{am labeling}
\end{longtable}}

\begin{table}[h!]
\centering
\fontsize{10pt}{9pt}\selectfont
\begin{tabular}{|c|c|c|p{3.5cm}|p{9.4cm}|}
\hline
\multirow{2}{*}{$t$} & \multirow{2}{*}{$m'$}& \multirow{2}{*}{$t+s-l$}&$g_t(E(2C_3))$, $\{\phi_{g_t}(V(2C_3))\}$ &\multicolumn{1}{c|}{\multirow{2}{*}{Pairs of labels on $P_3$ and their sums}} \\ \hline
\multirow{2}{*}{9} & \multirow{2}{*}{24} & \multirow{2}{*}{48} & (20, \underline{23}, \underline{24}), (12, 7, 19), \{19, 26, 31, 43, 44, 47\} &(3, 4 $|$ 7) (2, 10 $|$ 12) (5, 15 $|$ 20) (1, 22 $|$ 23) (6, 18 $|$ 24) (9, 16 $|$ 25) (13, 14 $|$ 27) (11, 17 $|$ 28) (8, 21 $|$ 29)\\\hline 
\multirow{2}{*}{10} & \multirow{2}{*}{26} & \multirow{2}{*}{46} & (8, \underline{25}, \underline{26}) (15, 16, 24), \{31, 33, 34, 39, 40, 51\} & (1, 7 $|$ 8), (2, 23 $|$ 25), (3, 12 $|$ 15), (4, 22 $|$ 26), (5, 11 $|$ 16), (6, 18 $|$ 24), (9, 19 $|$ 28), (10, 20 $|$ 30), (13, 14 $|$ 27), (17, 21 $|$ 38) \\ \hline
\multirow{3}{*}{11} & \multirow{3}{*}{28} & \multirow{3}{*}{43} &(3, \underline{27}, \underline{28}) (11, 23, 26), \{30, 31, 34, 37, 49, 55\} & (1, 2 $|$ 3), (4, 24 $|$ 28), (5, 6 $|$ 11), (7, 20 $|$ 27), (8, 15 $|$ 23), (9, 17 $|$ 26), (10, 19 $|$ 29), (12, 21 $|$ 33), (13, 22 $|$ 35), (14, 18 $|$ 32), (16, 25 $|$ 41) \\ \hline
\multirow{3}{*}{12} & \multirow{3}{*}{30} & \multirow{3}{*}{39} & (3, \underline{29}, \underline{30}) (11, 23, 26), \{32, 33, 34, 37, 49, 59\} & (1, 2 $|$ 3), (4, 25 $|$ 29), (5, 6 $|$ 11), (7, 19 $|$ 26), (8, 22 $|$ 30), (9, 14 $|$ 23), (10, 28 $|$ 38), (12, 24 $|$ 36), (13, 27 $|$ 40), (15, 16 $|$ 31), (17, 18 $|$ 35), (20, 21 $|$ 41) \\ \hline
\multirow{3}{*}{13} & \multirow{3}{*}{32} & \multirow{3}{*}{34} &(3, \underline{31}, \underline{32}) (15, 16, 17), \{31, 32, 33, 34, 35, 63\} & (1, 2$|$ 3), (4, 11$|$ 15), (5, 12 $|$ 17), (6, 10 $|$ 16), (7, 29 $|$ 36), (8, 30 $|$ 38), (9, 28 $|$ 37), (13, 26 $|$ 39), (14, 27 $|$ 41), (18, 22 $|$ 40), (19, 23 $|$ 42), (20, 24 $|$ 44), (21, 25 $|$ 46) \\ \hline
\multirow{3}{*}{14} & \multirow{3}{*}{34} & \multirow{3}{*}{28} & (10, 18, 34), (6, \underline{28}, \underline{29}), \{28, 34, 35, 44, 52, 57\} &  (2, 4 $|$ 6), (1, 9 $|$ 10), (3, 15 $|$ 18), (7, 22 $|$ 29), (5, 31 $|$ 36), (16, 21 $|$ 37), (11, 27 $|$ 38), (13, 26 $|$ 39), (17, 23 $|$ 40), (8, 33 $|$ 41), (12, 30 $|$ 42), (19, 24 $|$ 43), (20, 25 $|$ 45), (14, 32 $|$ 46) \\ \hline
\multirow{4}{*}{15} &\multirow{4}{*}{36} & \multirow{4}{*}{21} &(5, 9, 35) (14, \underline{21}, \underline{36}), \{14, 35, 40, 44, 50, 57\} & (1, 4 $|$ 5), (2, 7 $|$ 9), (3, 18 $|$ 21), (6, 30 $|$ 36), (8, 29 $|$ 37), (10, 28 $|$ 38), (11, 31 $|$ 42), (12, 27 $|$ 39), (13, 33 $|$ 46), (15, 34 $|$ 49), (16, 32 $|$ 48), (17, 24 $|$ 41), (19, 26 $|$ 45), (20, 23 $|$ 43), (22, 25 $|$ 47)   \\ \hline
\end{tabular}
\caption{Antimagic labelings $g_t$ of $2C_3 \cup tP_3$ when $9 \leq t \leq 15$. The underlined numbers for $9 \leq t \leq 13$ are $m'$ and $m'-1$ (satisfying the hypotheses of \Href{two largest labels general}); while for $t=14, 15$, they are labels greater than or equal to $t+s-l$ (satisfying the hypotheses of \Href{lemma am general}).} 
\label{tab:doube c3 B}
\end{table}
\end{document}